\newtheorem{theorem}{Theorem}[section]
\newtheorem{lem}[theorem]{Lemma}
\newtheorem{rem}[theorem]{Remark}
\newtheorem{Def}[theorem]{Definition}
\newtheorem{Ex}[theorem]{Example}
\newtheorem{Ass}[theorem]{Assumption}
\DeclareMathOperator*{\esssup}{ess\,sup}
\let\originalleft\left
\let\originalright\right
\renewcommand{\left}{\mathopen{}\mathclose\bgroup\originalleft}
\renewcommand{\right}{\aftergroup\egroup\originalright}
\newcommand{\Addresses}{{
		\footnote{

			\noindent	 \textsuperscript{1,3}Department of Applied Sciences and Engineering, Indian Institute of Technology Roorkee-IIT Roorkee, Roorkee, 247667, India.

			\noindent  \textit{e-mail\textsuperscript{1}:} \texttt{sumit@as.iitr.ac.in.}
			
			\noindent  \textit{e-mail\textsuperscript{3}:} \texttt{jay.dabas@gmail.com.}

			\noindent \textsuperscript{2}Department of Mathematics, Indian Institute of Technology Roorkee-IIT Roorkee, Roorkee, 247667, India.\par\nopagebreak
			\noindent  \textit{e-mail\textsuperscript{2}:} \texttt{maniltmohan@ma.iitr.ac.in, maniltmohan@gmail.com.}

			\noindent \textsuperscript{*}Corresponding author.

			\textit{Key words:} approximate controllability, non-instantaneous impulses, state-dpendent delay, fractional derivative, Mainardi’s Wright-type function.
			
			Mathematics Subject Classification (2020): 34K06, 34A12, 37L05, 93B05.

}}}
\begin{document}
	\title[Approximate controllability of fractional order impulsive systems]{Approximate controllability of non-instantaneous impulsive fractional evolution equations of order $1<\alpha<2$ with state-dependent delay in Banach spaces\Addresses}
	\author [S. Arora, M. T. Mohan and J. Dabas]{S. Arora\textsuperscript{1}, Manil T. Mohan\textsuperscript{2}  and J. dabas\textsuperscript{3*}}
	\maketitle{}
	\begin{abstract} The current article examines the approximate controllability problem for non-instantaneous impulsive fractional evolution equations of order $1<\alpha<2$ with state-dependent delay in separable reflexive Banach spaces. In order to establish sufficient conditions for the approximate controllability of our problem, we first formulate the linear-regulator problem and obtain the optimal control in feedback form. By using this optimal control, we deduce the approximate controllability of the linear fractional control system of order $1<\alpha<2$. Further, we derive sufficient conditions for the approximate controllability of the nonlinear problem. Finally, we provide a concrete example to validate the efficiency of the derived results .
	\end{abstract}
	\section{Introduction}\label{intro}\setcounter{equation}{0}
	
	Fractional evolution equations involve the derivatives of the form $\frac{d^{\alpha}}{dt^{\alpha}}$, where $\alpha>0$ is not necessarily an integer, have received much attention from researchers. This rising interest is motivated both by important applications of the theory, and by the difficulties involved in the mathematical structure. Fractional evolution equations appear in many physical phenomena arising from various scientific fields including analysis of viscoelastic materials, electrical engineering, control theory of dynamical systems, electrodynamics with memory, quantum mechanics, heat conduction in materials with memory, signal processing, economics, and many other fields (cf. \cite{HlR,AAK,YAR,IPF,MST}, etc). For an extensive study of the fractional differential equations (FDEs), we refer the interested readers to \cite{AAK,IPF}, etc. 
	
	The real-world phenomena such as natural disasters, harvesting, shocks, etc, experience unpredictable changes in their states for negligible time duration. These sudden changes are relevant and interesting, and well approximated by instantaneous impulses. Such processes are conveniently described by instantaneous impulsive evolution equations. The theory of instantaneous impulsive evolution equations has found various applications such as optimal control models in economics, threshold, pharmacokinetics, bursting rhythm models in medicine and frequency modulated systems etc (cf. \cite{BM,LVB,NE,TY} etc). Moreover, the evolution processes in pharmacotherapy, such as the consequent absorption of the body, in hemodynamical steadiness of a person and the insulin distribution in the bloodstream are moderate and continuous. Hence, the dynamics of such phenomena cannot be interpreted by the instantaneous impulsive systems.  Therefore, this situation can be appropriately estimated as an impulsive action that starts suddenly and continuously active over a finite time interval. Thus, the dynamics of such processes are suitably described by non-instantaneous impulsive systems. 
	
	Hern\'andez et. al. \cite{EHD}, first introduced a class of impulses that are suddenly active for an arbitrary fixed point and remains in action on a finite time interval.  Such impulses are called non-instantaneous impulses. Then, they developed the existence of solutions for non-instantaneous impulsive differential equations. Later, Wang et. al. \cite{JRW,JRY} extended this model to two general classes of impulsive differential equations, which are very useful to characterize certain dynamics of evolutionary processes in pharmacotherapy. For the study of   non-instantaneous impulsive systems, one can refer  to \cite{JMF,YTJ}, etc, and the references therein.
	
	On the other hand, there are several real-world phenomena in which the current state of a system depends on the past states. The dynamics of such types of processes are suitably analyzed by delay (functional) evolution system, which naturally occurs in logistic reaction-diffusion processes with delay, ecological models, neural network, inferred grinding models, etc (cf. \cite{FC,LA,NJ}, etc). Moreover, the evolution equations with state-dependent delays are more frequent and widely applicable, see for example \cite{AO,FC}, etc, and the references therein.

	The notion of controllability is one of the fundamental concepts in mathematical control theory and play a significant role in many control problems such as, irreducibility of transition semigroups \cite{GDJZ}, the optimal control problems \cite{VB}, stabilization of unstable systems via feedback control \cite{VB1} etc. Controllability (exact and approximate) means that the solution of a considered dynamical control system can steer from an arbitrary initial state to a desired final state by using some suitable control function. For the infinite dimensional control systems, the problem of approximate controllability is more substantial and is having broad range of applications, see for instance, \cite{M,TRR,TR,EZ}, etc. In the past few decades, the problem of approximate controllability for different kinds of dynamical systems (in Hilbert and Banach spaces) such as fractional evolution equations, Sobolev type systems, delay (functional) differential equations, impulsive systems, stochastic differential equations, etc, has been widely studied by many researchers and they have produced excellent results, see for instance, \cite{SAM,SM,AGK,M,ER,Z} etc, and the references therein. 
	
	It is well known that the slow diffusion processes are described by first-order evolution equations while the fast diffusion processes are characterized by second-order evolution systems. Similarly, FDEs of order $\alpha\in(0,1)$ stand for slow anomalous diffusion processes (subdiffusion process), while the fast anomalous diffusion processes (superdiffusion process) are suitably modeled by FDEs of order $\alpha>1$. Therefore, by this reason, the theory for fractional evolution equations of order $\alpha\in(0,1)$ is discrete from that for fractional evolution equations of order $\alpha>1$, and it inspired the researchers to study both theories separately (see  \cite{Jkr,Jkj}, etc for more details). In the past two decades, a good number of results came out on the existence of solutions and approximate controllability for fractional systems of order $\alpha\in(0,1)$ in Hilbert and Banach spaces, see for example, \cite{DNC,JA,GR,SNS,MIN,NMI,JWY,Z}, etc, and the references therein. Recently, many authors established the existence of  mild solutions and approximate controllability for  fractional evolution equations of order $\alpha\in(1,2)$ by invoking the theory of sectorial operators via suitable fixed point theorems, see for instance, \cite{Qh,Sr,Sbl,Sb} etc. 
	
	Recently, a few developments have been reported on the existence and controllability of fractional evolution equations of order $1<\alpha<2$  by applying the cosine and sine family of bounded linear operators (cf. \cite{Jh,MMr,Yz} etc). A new concept of mild solutions for fractional evolution systems with order $\alpha\in (1, 2)$ in Banach spaces  by using  the Laplace transform and Mainardi's Wright-type function is introduced in \cite{Yz}. {Henr\'iquez et.al \cite{JFA} investigated the existence of a classical solution for the fractional homogeneous and inhomogeneous abstract Cauchy problems of order $\alpha\in(1,2)$ by using the $\alpha$-resolvent family corresponding to the homogeneous Cauchy problem}. Raja et. al. \cite{MMv} formulated sufficient conditions for the approximate controllability of  fractional differential equation of order $1<\alpha<2$ in Hilbert spaces by applying the concept of cosine and sine family of operators via Schauder's fixed point theorem. Raja and Vijaykumar \cite{Mvv} derived sufficient conditions for the approximate controllability of  fractional integro-differential system of order $\alpha\in(1,2)$ in Hilbert spaces. Later, Vijaykumar et.al. in \cite{Vcr} demonstrated the approximate controllability of Sobolev-type Volterra-Fredholm integro-differential equations of order $1<\alpha<2$ in Hilbert spaces by invoking the Krasnoselskii fixed point theorem. 
	
	To the best of our knowledge, the approximate controllability for impulsive fractional differential equations of order $1<\alpha<2$ with state-dependent delay in Banach spaces by using the idea of cosine and sine family of operators is not investigated yet, and it motivated us to encounter this problem. In this paper, we consider semilinear impulsive fractional differential equations of order $1<\alpha<2$ with non-instantaneous impulses and state-dependent delay in separable reflexive Banach spaces. We derive sufficient conditions for the approximate controllability of the considered system by applying the concept of $\alpha/2$-resolvent family (defined in \eqref{2.3}, which is related to the strongly continuous consine family generated by the operator $\mathrm{A}$), resolvent operator condition and Schauder's fixed point theorem. Moreover, we discuss the construction of a feedback control and the approximate controllability of the linear fractional control system of order $1<\alpha<2$ in details (see, Section \ref{sec3}), which lacks in the available literature. Furthermore, due to the observation related to characterization of the phase space for impulsive systems discussed in \cite{GU}, we replace the uniform norm on the phase space by the integral norm (see Example \ref{exm2.8}),

	Let us consider the following non-instantaneous impulsive fractional order system with state-dependent delay: 
	\begin{equation}\label{1.1}
		\left\{
		\begin{aligned}
			^C\mathrm{D}_{0,t}^{\alpha}x(t)&=\mathrm{A}x(t)+\mathrm{B}u(t)+f(t,x_{\varrho(t, x_t)}), \ t\in\bigcup_{j=0}^{p} (s_j, \tau_{j+1}]\subset J=[0,T], \\
			x(t)&=h_j(t, x(\tau_j^{-})),\ t\in(\tau_j, s_j], \ j=1,\dots,p, \\
			x'(t)&=h'_j(t, x(\tau_j^{-})),\ t\in(\tau_j, s_j], \ j=1,\dots,p, \\
			x_{0}&=\psi\in \mathfrak{B}, x'(0)=\eta,
		\end{aligned}
		\right.
	\end{equation}
	where \begin{itemize} \item $^C\mathrm{D}_{0,t}^{\alpha}$ is the Caputo fractional derivative of order $\alpha$ with $1<\alpha<2$, \item  the operator $\mathrm{A}:\mathrm{D(A)}\subset \mathbb{X}\to\mathbb{X}$ is linear (not necessarily bounded), where $ \mathbb{X}$ is a separable reflexive Banach space having a strictly convex dual $\mathbb{X}^*$, \item the linear operator $\mathrm{B}:\mathbb{U}\to\mathbb{X}$ is bounded with $\left\|\mathrm{B}\right\|_{\mathcal{L}(\mathbb{U};\mathbb{X})}= \tilde{M}$ and the control function $u\in \mathrm{L}^{2}(J;\mathbb{U})$ (admissible control class), where $\mathbb{U}$ is a separable Hilbert space, \item  the appropriate function $f:J\times \mathfrak{B}\rightarrow \mathbb{X},$  where $\mathfrak{B}$ is a phase space satisfying some axioms which will be provided  in next section, \item the functions $h_j:[\tau_j, s_j]\times\mathbb{X}\to\mathbb{X}$ and their derivatives $h'_j:[\tau_j, s_j]\times\mathbb{X}\to\mathbb{X}$ for $j=1,\ldots,p$, denote the non-instantaneous impulses and the fixed points $s_j$ and $\tau_j$ satisfy $0=\tau_0=s_0<\tau_1\le s_1\le \tau_2<\ldots<\tau_p\le s_p\le \tau_{p+1}=T$,   the values  $x(\tau_j^+)$ and $x(\tau_j^-)$ stand for the right and left limit of $x(t)$ at the point $t=\tau_j,$ respectively and satisfy $x(\tau_j^-)=x(\tau_j),$ for $j=1,\ldots,p$, \item the function $x_{t}:(-\infty, 0]\rightarrow\mathbb{X}$ with $x_{t}(\theta)=x(t+\theta)$ and  $x_t\in \mathfrak{B}$, for each $t\in J$. The function $\varrho: J \times \mathfrak{B} \rightarrow (-\infty, T]$ is continuous.
	\end{itemize}
	
	The rest of the article is structured as follows: In section \ref{sec2}, we recall some basic definitions of fractional calculus, property of cosine family and the phase space axioms. Moreover, we also provide the assumptions and important results which is useful in the subsequent sections. Section \ref{sec3} is reserved for the approximate controllability of the linear fractional control system of order $1<\alpha<2$. For this, first we formulate the linear regulator problem, show the existence of an optimal control (Theorem \ref{th1}), and then we derive an explicit form of the optimal control in the feedback form in Lemma \ref{lem3.1}. Further, we examine the approximate controllability of the linear fractional control system \eqref{3.2}, by using the optimal control (Lemma \ref{lem3.4}). In section \ref{semilinear}, we first prove the existence of a mild solution for the control system \eqref{1.1} by applying Schauder's fixed point theorem (Theorem \ref{thm4.3}), and then demonstrate the approximate controllability of the system \eqref{1.1} in Theorem \ref{thm4.4}. In the final section, we present an example to support the application of the developed theory.
	
	\section{Preliminaries}\label{sec2}\setcounter{equation}{0}
	In the present section, we introduce some standard notations, basic definitions and facts about fractional calculus. Moreover, we also recall some important properties of cosine and sine families, and also provide the axioms of phase space. 
	
	Let $\mathrm{AC}([a,b];\mathbb{R})$ denote the space of all mappings from $[a,b]$ to $\mathbb{R},$ which is absolutely continuous and $\mathrm{AC}^n([a,b];\mathbb{R}),$ for $n\in\mathbb{N},$ represent the space of all functions $f:[a,b]\to\mathbb{R}$, which have continuous derivatives up to order $n-1$  with $f^{(n-1)}\in\mathrm{AC}([a,b];\mathbb{R})$.

	First, we recall some basic definitions from fractional calculus and the details can be found in \cite{AAK,IPF}, etc.
	\begin{Def}
		The \emph{fractional integral} of order $\beta>0$ for a function $f\in\mathrm{L}^1([a,b];\mathbb{R})$, $a,b\in\mathbb{R}$ with $a<b$, is given as
		\begin{align*}
			I_{a}^{\beta}f(t):=\frac{1}{\Gamma(\beta)}\int_{a}^{t}\frac{f(s)}{(t-s)^{1-\beta}}\mathrm{d}s,\ \mbox{ for a.e. } \  t\in[a,b],
		\end{align*}
		where $\Gamma(\alpha)=\int_{0}^{\infty}t^{\alpha-1}e^{-t}\mathrm{d}t$ is the Euler gamma function.
	\end{Def}
	\begin{Def}
		The \emph{Riemann-Liouville fractional derivative} of order $\beta>0$ for a function  $f\in\mathrm{AC}^n([a,b];\mathbb{R})$  is defined as 
		\begin{align*}
			^L\mathrm{D}_{a,t}^{\beta}f(t):=\frac{1}{\Gamma(n-\beta)}\frac{d^n}{dt^n}\int_{a}^{t}(t-s)^{n-\beta-1}f(s)\mathrm{d}s,\ \mbox{ for a.e. }\ t\in[a,b],
		\end{align*}
		where $n-1< \beta<n$.
	\end{Def}

	\begin{Def}
		The \emph{Caputo fractional derivative} of a function $f\in\mathrm{AC}^n([a,b];\mathbb{R})$ of order $\beta>0$   is defined as 
		\begin{align*}
			^C\mathrm{D}_{a,t}^{\beta}f(t):=\ ^L\mathrm{D}_{a,t}^{\beta}\left[f(t)-\sum_{k=1}^{n-1}\frac{f^{(k)}(a)}{k!}(x-a)^k\right],\ \mbox{ for a.e. } \ t\in[a,b].
		\end{align*}
	\end{Def}
	\begin{rem}[\cite{IPF}]
		If a function $f\in\mathrm{AC}^n([a,b];\mathbb{R})$, then the Caputo fractional derivative of $f$ can be written as
		\begin{align*}
			^C\mathrm{D}_{a,t}^{\beta}f(t)=\frac{1}{\Gamma(n-\beta)}\int_{a}^{t}(t-s)^{n-\beta-1}f^{(n)}(s)\mathrm{d}s,\ \mbox{for a.e.}\ t \in[a,b], \ n-1< \beta<n.
		\end{align*}
	\end{rem}
	\subsection{The strongly continuous cosine family:} In this subsection, we first construct a strongly continuous cosine family $\left\{\mathrm{C}\left(t \right) :t\in \mathbb{R}\right\}$ in $\mathbb{X}$ generated by $\mathrm{A}$, and then review some important properties of the cosine and sine families. In order to construct a strongly continuous cosine family $\left\{\mathrm{C}\left(t \right) :t\in \mathbb{R}\right\}$, we impose the following assumptions on the linear operator $\mathrm{A}:\mathrm{D}(\mathrm{A})\to\mathbb{X}$.
	\begin{Ass}\label{ass2.1}
		The operator $\mathrm{A}$ fulfills the following conditions:
		\begin{itemize}
			\item[(R1)]The linear operator $\mathrm{A}$ is closed with dense domain $\mathrm{D}(\mathrm{A} )$ in $\mathbb{X}$.
			\item[(R2)] For real $\lambda$, $\lambda>0,$ $\lambda^2$ is in the resolvent set $\rho(\mathrm{A})$ of $\mathrm{A}$.
			The resolvent $\mathrm{R}(\lambda^2; \mathrm{A})$ exists, strongly infinitely differentiable, and satisfies 
			$$\left\|\left(\frac{\mathrm{d}}{\mathrm{d}\lambda}\right )^k(\lambda \mathrm{R}(\lambda^2; \mathrm{A}))\right\|_{\mathcal{L}(\mathbb{X})}\!\!\!\le\frac{Mk!}{\lambda^{k+1}},$$
			for $k=0,1,2,\dots. $
			\item[(R3)]  $\mathrm{R}(\lambda^2;\mathrm{A})$ is compact for some $\lambda$ with $\lambda^2\in \rho(\mathrm{A}).$
		\end{itemize}
	\end{Ass}
	
	\begin{Def}[\cite{TCC1}]\label{cosine}
		A family of bounded linear operators $\{\mathrm{C}(t):t\in \mathbb{R}\}$ in a Banach space $\mathbb{X}$ is called \emph{a strongly continuous cosine family} if
		\begin{enumerate}
			\item [(1)] $\mathrm{C}(s+t)+\mathrm{C}(s-t)=2\mathrm{C}(s)\mathrm{C}(t)$, for $ s, t \in \mathbb{R}$;
			\item [(2)] $\mathrm{C}(0)=\mathrm{I}$, where $\mathrm{I}$ represents the identity operator;
			\item[(3)] $\mathrm{C}(t)x$ is continuous in t on $\mathbb{R},$ for each fixed $x\in\mathbb{X}$.
		\end{enumerate}
	\end{Def}
	The family $\{\mathrm{S}(t):t\in \mathbb{R}\}$ on $\mathbb{X}$  defined by 
	\begin{align}
		\nonumber \mathrm{S}(t)x&=\int_{0}^{t}\mathrm{C}(s)x\mathrm{d}s,\ x\in \mathbb{X},\ t\in\mathbb{R}.
	\end{align}
	is called \emph{strongly continuous sine family}.
	
	The infinitesimal generator $\mathrm{A}$ of a strongly continuous cosine family $\{\mathrm{C}(t):t\in \mathbb{R}\}$ is defined by
	$$\mathrm{A}x=\frac{\mathrm{d}^2}{\mathrm{d}t^2}\mathrm{C}(t)x\Big|_{t=0},\ x\in \mathrm{D}(\mathrm{A}),$$  where 
	\begin{align*}
		\mathrm{D}(\mathrm{A})&= \Big\{ x\in \mathbb{X}:\mathrm{C}(t)x \text{ is two times continuously differentiable function} \text{ with respect to } t\Big\},
	\end{align*} 
	equipped with the graph norm $$\left\|x\right\|_{\mathrm{D}(\mathrm{A})}=\left\|x\right\|_\mathbb{X}+\left\|\mathrm{A}x\right\|_\mathbb{X},\ x\in \mathrm{D}(\mathrm{A}).$$ We also define the set 
	$$\mathrm{E}=\Big\{x: \mathrm{C}(t)x \text{ is continuously differentiable function with respect to}\ t\Big\},$$ with the norm
	$$\left\|x\right\|_1=\left\|x\right\|_\mathbb{X}+\sup_{0\le t\le 1}\left\|\mathrm{AS}(t)x\right\|_{\mathbb{X}},\  x\in \mathrm{E},$$
	forms a Banach space, see \cite{JKI}.

	\begin{lem}[Proposition 2.7, \cite{TCC2}]\label{lem2.1}
		If the assumptions (R1)-(R2) hold true. Then the operator $\mathrm{A}$ generates a strongly continuous cosine family $\{\mathrm{C}(t):t\in \mathbb{R}\}$, which is uniformly bounded, that is, there exists a constant $M\ge1$ such that $\left\|\mathrm{C}(t)\right\|_{\mathcal{L}(\mathbb{X})}\le M$.
	\end{lem}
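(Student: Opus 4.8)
The plan is to establish this as the cosine-family analogue of the Hille--Yosida--Phillips generation theorem, with the estimate (R2) playing the role of the classical resolvent bound. The guiding principle is that any strongly continuous cosine family with generator $\mathrm{A}$ must satisfy the Laplace relation $\lambda\mathrm{R}(\lambda^2;\mathrm{A})x=\int_0^\infty e^{-\lambda t}\mathrm{C}(t)x\,\mathrm{d}t$ for $\lambda>0$ and $x\in\mathbb{X}$; thus (R2) says precisely that $\lambda\mapsto\lambda\mathrm{R}(\lambda^2;\mathrm{A})$ has the growth behaviour of the Laplace transform of an $\mathcal{L}(\mathbb{X})$-valued function bounded by $M$. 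I would construct $\mathrm{C}(t)$ by Yosida approximation and recover the bound by inverting this relation.

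First I would introduce the Yosida approximants $\mathrm{A}_\lambda:=\lambda^2\mathrm{A}\mathrm{R}(\lambda^2;\mathrm{A})=\lambda^4\mathrm{R}(\lambda^2;\mathrm{A})-\lambda^2\mathrm{I}$, which are bounded for each $\lambda>0$ by (R2). From the $k=0$ case of (R2), rewritten as $\|\lambda^2\mathrm{R}(\lambda^2;\mathrm{A})\|_{\mathcal{L}(\mathbb{X})}\le M$, together with the density of $\mathrm{D}(\mathrm{A})$ from (R1), I would deduce $\lambda^2\mathrm{R}(\lambda^2;\mathrm{A})x\to x$ for every $x\in\mathbb{X}$ and hence $\mathrm{A}_\lambda x\to\mathrm{A}x$ for $x\in\mathrm{D}(\mathrm{A})$. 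Since each $\mathrm{A}_\lambda$ is bounded, it generates the entire cosine family given by the uniformly convergent series $\mathrm{C}_\lambda(t)=\sum_{n=0}^\infty\frac{t^{2n}}{(2n)!}\mathrm{A}_\lambda^n$.

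The central step is the \emph{uniform} bound $\|\mathrm{C}_\lambda(t)\|_{\mathcal{L}(\mathbb{X})}\le M$ for all $\lambda>0$ and $t\in\mathbb{R}$. For the bounded generator $\mathrm{A}_\lambda$ one again has $\mu\mathrm{R}(\mu^2;\mathrm{A}_\lambda)=\int_0^\infty e^{-\mu t}\mathrm{C}_\lambda(t)\,\mathrm{d}t$, and the key point is that the derivative estimates of (R2) are inherited by $\mu\mapsto\mu\mathrm{R}(\mu^2;\mathrm{A}_\lambda)$ uniformly in $\lambda$. Substituting these into the Post--Widder real inversion formula $\mathrm{C}_\lambda(t)=\lim_{k\to\infty}\frac{(-1)^k}{k!}\big(\tfrac{k}{t}\big)^{k+1}\big(\tfrac{\mathrm{d}}{\mathrm{d}\mu}\big)^k[\mu\mathrm{R}(\mu^2;\mathrm{A}_\lambda)]\big|_{\mu=k/t}$ gives the clean constant $M$, since $\frac{1}{k!}\big(\tfrac{k}{t}\big)^{k+1}\cdot\frac{Mk!}{(k/t)^{k+1}}=M$.

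Finally, with the uniform bound in hand I would prove convergence of $\mathrm{C}_\lambda(t)x$ as $\lambda\to\infty$. Writing the sine families $\mathrm{S}_\lambda(t)x=\int_0^t\mathrm{C}_\lambda(s)x\,\mathrm{d}s$, which then satisfy $\|\mathrm{S}_\lambda(t)\|_{\mathcal{L}(\mathbb{X})}\le M|t|$, the variation-of-parameters identity $\mathrm{C}_\lambda(t)x-\mathrm{C}_\mu(t)x=\int_0^t\mathrm{S}_\lambda(t-s)(\mathrm{A}_\lambda-\mathrm{A}_\mu)\mathrm{C}_\mu(s)x\,\mathrm{d}s$ combined with $\mathrm{A}_\lambda x\to\mathrm{A}x$ on $\mathrm{D}(\mathrm{A})$ furnishes a Cauchy estimate on the dense set $\mathrm{D}(\mathrm{A})$, which extends to all of $\mathbb{X}$ by the uniform bound through a standard $3\varepsilon$ argument. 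Setting $\mathrm{C}(t)x:=\lim_{\lambda\to\infty}\mathrm{C}_\lambda(t)x$, the functional equation, $\mathrm{C}(0)=\mathrm{I}$, strong continuity and the bound $\|\mathrm{C}(t)\|_{\mathcal{L}(\mathbb{X})}\le M$ all survive the limit, while a Laplace-transform uniqueness argument identifies the generator as exactly $\mathrm{A}$; the value $M\ge1$ is forced by $\mathrm{C}(0)=\mathrm{I}$. I expect the uniform bound to be the main obstacle: transferring the derivative estimate (R2) from $\mathrm{A}$ to all approximants $\mathrm{A}_\lambda$ uniformly in $\lambda$ and rigorously justifying the inversion formula is the technical heart of the proof.
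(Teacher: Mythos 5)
The paper offers no proof of this lemma at all: it is quoted from Travis--Webb \cite{TCC2} (Proposition 2.7), i.e.\ it is the classical generation theorem for cosine families due to Sova, Da Prato--Giusti and Fattorini. So you are attempting to reprove that theorem from scratch. Your outer architecture is a legitimate Hille--Yosida-type scheme, and most of the peripheral steps do check out: the $k=0$ case of (R2) plus density of $\mathrm{D}(\mathrm{A})$ gives $\lambda^2\mathrm{R}(\lambda^2;\mathrm{A})x\to x$ and $\mathrm{A}_\lambda x\to \mathrm{A}x$ on $\mathrm{D}(\mathrm{A})$; your variation-of-parameters identity is correct once one uses that $\mathrm{A}_\lambda$, $\mathrm{A}_\mu$ and their cosine/sine families all commute (the ``natural'' identity has $\mathrm{C}_\lambda(t-s)(\mathrm{A}_\lambda-\mathrm{A}_\mu)\mathrm{S}_\mu(s)$ in the integrand, but the two forms agree by commutativity); the $3\varepsilon$ extension and the Laplace-transform identification of the generator are routine; and Post--Widder inversion would indeed convert Widder-type bounds on $\mu\mapsto\mu\mathrm{R}(\mu^2;\mathrm{A}_\lambda)$ into $\|\mathrm{C}_\lambda(t)\|_{\mathcal{L}(\mathbb{X})}\le M$.

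The genuine gap is the single sentence on which everything rests: that the estimates of (R2) are ``inherited by $\mu\mapsto\mu\mathrm{R}(\mu^2;\mathrm{A}_\lambda)$ uniformly in $\lambda$.'' This is asserted, not proved, and it is not a deferable technicality --- it \emph{is} the theorem. Two concrete reasons it does not come for free. First, the mechanism that makes the semigroup analogue easy is absent: for semigroups one has $(\mathrm{d}/\mathrm{d}\lambda)^k\mathrm{R}(\lambda;\mathrm{A})=(-1)^k k!\,\mathrm{R}(\lambda;\mathrm{A})^{k+1}$, so the resolvent bounds are bounds on resolvent \emph{powers} and can be fed termwise into the exponential series for $e^{t\mathrm{A}_\lambda}$; here $(\mathrm{d}/\mathrm{d}\lambda)^k\bigl(\lambda\mathrm{R}(\lambda^2;\mathrm{A})\bigr)$ is not a resolvent power, and termwise estimation of $\mathrm{C}_\lambda(t)=\sum_n\frac{t^{2n}}{(2n)!}\mathrm{A}_\lambda^n$ fails outright because $\mathrm{A}_\lambda^n=\lambda^{2n}\bigl(\lambda^2\mathrm{R}(\lambda^2;\mathrm{A})-\mathrm{I}\bigr)^n$ expands with alternating signs, so crude bounds give growth of order $\cosh(c\lambda t)$ rather than $M$. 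Second, if you instead try to prove the inherited bound from the exact identity $\mathrm{R}(\mu^2;\mathrm{A}_\lambda)=\frac{\lambda^4}{(\mu^2+\lambda^2)^2}\mathrm{R}\bigl(\nu(\mu)^2;\mathrm{A}\bigr)+\frac{1}{\mu^2+\lambda^2}\mathrm{I}$ with $\nu(\mu)=\lambda\mu/\sqrt{\mu^2+\lambda^2}$, you must push $k$ derivatives through this composition; $\nu$ is not a Bernstein function of $\mu$ (its derivative $\lambda^3(\mu^2+\lambda^2)^{-3/2}$ is not completely monotone), so the subordination argument that would make Widder bounds transfer automatically is unavailable. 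Whatever closes this step is essentially Sova's proof, and it is missing. For reference, the standard modern route avoids approximants entirely: (R2) says precisely that $\lambda\mapsto\lambda\mathrm{R}(\lambda^2;\mathrm{A})$ satisfies Widder's condition, hence by the vector-valued Widder--Arendt representation theorem it is the Laplace--Stieltjes transform of an $M$-Lipschitz family $\mathrm{S}(t)$; one then shows $\mathrm{S}$ is a sine family and uses density of $\mathrm{D}(\mathrm{A})$ to differentiate it into the desired cosine family with the bound $M$.
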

	\begin{lem}[Proposition 5.2, \cite{TCC2}]\label{lem2.2}
		Suppose that the operator $\mathrm{A}$ generates a strongly continuous cosine family $\{\mathrm{C}(t):t\in\mathbb{R}\}$ on $\mathbb{X}$. Let  $\{\mathrm{S}(t):t\in \mathbb{R}\}$ be the associated sine family on $\mathbb{X}$. Then the following are equivalent:
		\begin{itemize}
			\item [(i)] The operator $\mathrm{S}(t)$ is compact for every $t\in\mathbb{R}$.
			\item [(ii)] Assumption (R3) holds.
		\end{itemize}
	\end{lem}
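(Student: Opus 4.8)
The plan is to establish the two implications (i)$\Rightarrow$(ii) and (ii)$\Rightarrow$(i) separately, the common engine being the Laplace-transform representation of the resolvent through the sine family. Starting from the abstract problem $\frac{\mathrm{d}^2}{\mathrm{d}t^2}\mathrm{C}(t)=\mathrm{AC}(t)$, $\mathrm{C}(0)=\mathrm{I}$, $\mathrm{C}'(0)=0$, I would apply the Laplace transform together with $\mathrm{S}(t)x=\int_0^t\mathrm{C}(s)x\,\mathrm{d}s$ to obtain, for $\lambda>0$ with $\lambda^2\in\rho(\mathrm{A})$,
\begin{align*}
\mathrm{R}(\lambda^2;\mathrm{A})x=\int_0^\infty e^{-\lambda t}\mathrm{S}(t)x\,\mathrm{d}t,\quad x\in\mathbb{X}.
\end{align*}
A second ingredient I would record first is that $t\mapsto\mathrm{S}(t)$ is Lipschitz continuous in the operator-norm topology: since $\mathrm{S}(t+h)-\mathrm{S}(t)=\int_t^{t+h}\mathrm{C}(s)\,\mathrm{d}s$ and $\|\mathrm{C}(s)\|_{\mathcal{L}(\mathbb{X})}\le M$ by Lemma \ref{lem2.1}, one has $\|\mathrm{S}(t+h)-\mathrm{S}(t)\|_{\mathcal{L}(\mathbb{X})}\le M|h|$. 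This norm-continuity is what upgrades the strong identities above to statements compatible with compactness.

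For (i)$\Rightarrow$(ii), assuming each $\mathrm{S}(t)$ is compact, I would prove $\mathrm{R}(\lambda^2;\mathrm{A})$ compact by approximating the integral above. Restricting to $[0,N]$, the norm-continuity of $\mathrm{S}$ makes $t\mapsto e^{-\lambda t}\mathrm{S}(t)$ norm-continuous, so $\int_0^N e^{-\lambda t}\mathrm{S}(t)\,\mathrm{d}t$ is an operator-norm limit of Riemann sums, each a finite combination of compact operators, hence compact. The estimate $\|\mathrm{S}(t)\|_{\mathcal{L}(\mathbb{X})}\le Mt$ forces the tail $\int_N^\infty e^{-\lambda t}\mathrm{S}(t)\,\mathrm{d}t\to0$ in norm as $N\to\infty$, so $\mathrm{R}(\lambda^2;\mathrm{A})$ is a norm-limit of compact operators and is therefore compact, which is exactly (R3).

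For (ii)$\Rightarrow$(i), assuming $\mathrm{R}(\lambda_0^2;\mathrm{A})$ is compact for one admissible $\lambda_0$, the resolvent identity propagates compactness of $\mathrm{R}(\mu;\mathrm{A})$ to every point of $\rho(\mathrm{A})$. I would then use the identity $\mathrm{A}\int_0^t\mathrm{S}(s)\,\mathrm{d}s=\mathrm{C}(t)-\mathrm{I}$, valid on all of $\mathbb{X}$, which shows $\mathrm{S}_2(t):=\int_0^t\mathrm{S}(s)\,\mathrm{d}s$ maps $\mathbb{X}$ into $\mathrm{D}(\mathrm{A})$. Writing
\begin{align*}
\mathrm{S}_2(t)=\mathrm{R}(\lambda^2;\mathrm{A})(\lambda^2-\mathrm{A})\mathrm{S}_2(t)=\lambda^2\mathrm{R}(\lambda^2;\mathrm{A})\mathrm{S}_2(t)-\mathrm{R}(\lambda^2;\mathrm{A})(\mathrm{C}(t)-\mathrm{I})
\end{align*}
exhibits $\mathrm{S}_2(t)$ as a sum of a compact operator composed with bounded operators, so $\mathrm{S}_2(t)$ is compact for every $t$. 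Finally, since $\mathrm{S}_2'(t)=\mathrm{S}(t)$ and $\frac1h(\mathrm{S}_2(t+h)-\mathrm{S}_2(t))=\frac1h\int_t^{t+h}\mathrm{S}(s)\,\mathrm{d}s\to\mathrm{S}(t)$ in operator norm, again by the Lipschitz estimate, $\mathrm{S}(t)$ is an operator-norm limit of compact operators and hence compact, giving (i).

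The step I expect to be the main obstacle is not any single algebraic identity but the consistent control of the topology: compactness is destroyed under strong limits, so every integral and every limit above must be justified in the operator-norm topology. The Lipschitz norm-continuity of $\mathrm{S}$ (a consequence of the uniform boundedness in Lemma \ref{lem2.1}) is precisely the tool that legitimizes both the Riemann-sum approximation in (i)$\Rightarrow$(ii) and the differentiation of $\mathrm{S}_2$ in (ii)$\Rightarrow$(i), and I would take care to verify $\mathrm{A}\int_0^t\mathrm{S}(s)\,\mathrm{d}s=\mathrm{C}(t)-\mathrm{I}$ on all of $\mathbb{X}$ rather than only on $\mathrm{D}(\mathrm{A})$, since the argument crucially needs $\mathrm{S}_2(t)\mathbb{X}\subset\mathrm{D}(\mathrm{A})$.
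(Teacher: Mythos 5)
The paper contains no proof of this lemma: it is quoted, with attribution, as Proposition 5.2 of Travis--Webb \cite{TCC2}, so there is no internal argument to compare yours against. Your proof is correct and self-contained, and it reconstructs the standard argument from the cosine-family literature: the representation $\mathrm{R}(\lambda^{2};\mathrm{A})x=\int_{0}^{\infty}e^{-\lambda t}\mathrm{S}(t)x\,\mathrm{d}t$ combined with the norm-Lipschitz continuity of $\mathrm{S}(\cdot)$ (the paper's Lemma \ref{lem2.3}(i)) makes the Riemann-sum-plus-tail argument for (i)$\Rightarrow$(ii) legitimate in the operator-norm topology, and the identity $\mathrm{A}\int_{0}^{t}\mathrm{S}(s)x\,\mathrm{d}s=\mathrm{C}(t)x-x$ for all $x\in\mathbb{X}$, which indeed holds on all of $\mathbb{X}$ and not just on $\mathrm{D}(\mathrm{A})$, gives (ii)$\Rightarrow$(i) via the compact factorization of $\int_{0}^{t}\mathrm{S}(s)\,\mathrm{d}s$ and norm-convergent difference quotients. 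Two cosmetic remarks: the Laplace-transform identity should be invoked as part of the generation theorem for uniformly bounded cosine families (valid for all real $\lambda>0$ here) rather than ``derived'' by a formal transform of $\mathrm{C}''=\mathrm{A}\mathrm{C}$; and the case $t<0$ should be dispatched by oddness of the sine family, $\mathrm{S}(-t)=-\mathrm{S}(t)$, which follows from evenness of $\mathrm{C}(\cdot)$. Neither affects the validity of the argument.
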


	\begin{lem}[Propositions 2.1 and 2.2, \cite{TCC1}]\label{lem2.3}
		Let the operator $\mathrm{A}$ generates a strongly continuous cosine family $\{\mathrm{C}(t):t\in\mathbb{R}\}$, which is uniformly bounded in $\mathbb{X}$, and let $\{\mathrm{S}(t):t\in \mathbb{R}\}$ be the associated sine family on $\mathbb{X}$. Then the  following are true:
		\begin{itemize}
			\item [(i)] $\left\|\mathrm{S}(t_2)-\mathrm{S}(t_1)\right\|_{\mathcal{L}(\mathbb{X})}\le M\left|t_2-t_1\right|,$ for all $t_1,t_2\in\mathbb{R}$;
			\item [(ii)]  If $x\in\mathrm{E}$, then $\mathrm{S}(t)x\in\mathrm{D}(\mathrm{A})$ and $\frac{d}{dt}\mathrm{C}(t)x=\mathrm{A}\mathrm{S}(t)x$.
			
		\end{itemize}
	\end{lem}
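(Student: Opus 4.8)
The plan is to prove the two assertions separately: (i) is an elementary consequence of the integral definition of the sine family together with the uniform bound on the cosine family, while (ii) rests on a fundamental integral identity combined with the closedness of $\mathrm{A}$. For (i), I would start from $\mathrm{S}(t)x=\int_0^t\mathrm{C}(s)x\,\mathrm{d}s$, fix $x\in\mathbb{X}$, and assume without loss of generality that $t_1\le t_2$, so that the difference telescopes into a single integral
\[
\mathrm{S}(t_2)x-\mathrm{S}(t_1)x=\int_{t_1}^{t_2}\mathrm{C}(s)x\,\mathrm{d}s .
\]
Taking norms, moving the norm inside the integral, and invoking the uniform bound $\|\mathrm{C}(s)\|_{\mathcal{L}(\mathbb{X})}\le M$ from Lemma \ref{lem2.1} (valid for all $s\in\mathbb{R}$) yields $\|\mathrm{S}(t_2)x-\mathrm{S}(t_1)x\|_{\mathbb{X}}\le M|t_2-t_1|\,\|x\|_{\mathbb{X}}$; taking the supremum over the unit ball of $\mathbb{X}$ gives the stated operator-norm estimate.

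For (ii), the crucial structural input is the identity
\[
\mathrm{A}\int_0^t\mathrm{S}(s)x\,\mathrm{d}s=\mathrm{C}(t)x-x,\qquad x\in\mathbb{X},\ t\in\mathbb{R},
\]
which I would establish first. Setting $w(t)=\int_0^t\mathrm{S}(s)x\,\mathrm{d}s$, I would compute $\mathrm{C}(r)w(t)$ using the sine analogue $2\mathrm{C}(r)\mathrm{S}(s)=\mathrm{S}(s+r)+\mathrm{S}(s-r)$ of the cosine functional equation (obtained by integrating Definition \ref{cosine}(1) in one variable and using that $\mathrm{C}$ is even and $\mathrm{S}$ is odd). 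After a change of variables this expresses $\mathrm{C}(r)w(t)$ in terms of the $C^2$ map $r\mapsto\int_0^r\mathrm{S}(v)x\,\mathrm{d}v$, so that $r\mapsto\mathrm{C}(r)w(t)$ is twice continuously differentiable; differentiating twice and evaluating at $r=0$ then gives $w(t)\in\mathrm{D}(\mathrm{A})$ with $\mathrm{A}w(t)=\mathrm{C}(t)x-x$, directly from the generator definition $\mathrm{A}x=\frac{\mathrm{d}^2}{\mathrm{d}t^2}\mathrm{C}(t)x|_{t=0}$.

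With this identity in hand, I would fix $x\in\mathrm{E}$ and run a difference-quotient argument. Since $s\mapsto\mathrm{S}(s)x$ is continuous, $\frac1h\int_t^{t+h}\mathrm{S}(s)x\,\mathrm{d}s\to\mathrm{S}(t)x$ as $h\to0$; applying $\mathrm{A}$ and using the fundamental identity gives
\[
\mathrm{A}\Big(\tfrac1h\int_t^{t+h}\mathrm{S}(s)x\,\mathrm{d}s\Big)=\frac{\mathrm{C}(t+h)x-\mathrm{C}(t)x}{h}\longrightarrow\frac{\mathrm{d}}{\mathrm{d}t}\mathrm{C}(t)x,
\]
where the limit exists precisely because $x\in\mathrm{E}$ forces $r\mapsto\mathrm{C}(r)x$ to be continuously differentiable. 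Since $\mathrm{A}$ is closed by assumption (R1), the convergence of both the arguments and their images under $\mathrm{A}$ forces $\mathrm{S}(t)x\in\mathrm{D}(\mathrm{A})$ and $\mathrm{A}\mathrm{S}(t)x=\frac{\mathrm{d}}{\mathrm{d}t}\mathrm{C}(t)x$, which is (ii). I expect the main obstacle to be the derivation of the fundamental identity: one must manipulate the cosine/sine functional equations carefully and verify the required twice-differentiability before invoking the generator definition, whereas the closing difference-quotient step is routine once membership in $\mathrm{E}$ supplies the $C^1$ regularity and closedness of $\mathrm{A}$ passes the limit.
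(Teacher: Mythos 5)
Your proof is correct, but note that the paper itself supplies no argument for this lemma: it is quoted verbatim from Travis and Webb (Propositions 2.1 and 2.2 of \cite{TCC1}), so the only meaningful comparison is with that classical source, and your argument is essentially a faithful reconstruction of it. Part (i) is exactly the standard one-line estimate from $\mathrm{S}(t_2)x-\mathrm{S}(t_1)x=\int_{t_1}^{t_2}\mathrm{C}(s)x\,\mathrm{d}s$ and the uniform bound $M$. For part (ii), your route through the identity $\mathrm{A}\int_0^t\mathrm{S}(s)x\,\mathrm{d}s=\mathrm{C}(t)x-x$ is sound: the sine functional equation $2\mathrm{C}(r)\mathrm{S}(s)=\mathrm{S}(s+r)+\mathrm{S}(s-r)$ does follow by integrating the cosine equation and using that $\mathrm{C}$ is even and $\mathrm{S}$ is odd, the map $r\mapsto\mathrm{C}(r)\int_0^t\mathrm{S}(s)x\,\mathrm{d}s$ is then $C^2$ with second derivative $\tfrac12\left[\mathrm{C}(t+r)x-2\mathrm{C}(r)x+\mathrm{C}(t-r)x\right]$, which at $r=0$ gives precisely $\mathrm{C}(t)x-x$ under the paper's generator definition, and the closing difference-quotient step combined with closedness of $\mathrm{A}$ is valid. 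One small point of hygiene: the lemma's hypotheses do not literally include (R1), so if you want the argument self-contained you should either remark that the generator of any strongly continuous cosine family is automatically closed (a standard fact, provable by exactly the kind of integral identity you established), or note that in this paper's setting (R1) is a standing assumption; either remedy is one sentence.
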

	\subsection{Phase space:} Let us provide the definition of the phase space $\mathfrak{B}$ in axiomatic form (cf. \cite{HY}) and appropriately modify for the impulsive systems (cf. \cite{VOj}). Specifically, the phase space $\mathfrak{B}$ is a linear space of all mapping from $(-\infty, 0]$ into $\mathbb{X}$ equipped with the seminorm $\left\|\cdot\right\|_{\mathfrak{B}}$ and satisfying the following axioms:
	\begin{enumerate}
		\item [(A1)] If $x: (-\infty, \nu+\sigma)\rightarrow \mathbb{X},\  \sigma>0,$ such that $x_{\nu}\in \mathfrak{B}$ and $x|_{[\nu, \nu+\sigma]}\in \mathrm{PC}([\nu, \nu+\sigma];\mathbb{X}),$ then for every $t\in [\nu, \nu+\sigma],$ the following conditions hold true:
		\begin{itemize}
			\item [(i)] $x_{t}$ is in $\mathfrak{B}$;
			\item [(ii)] $\left\|x_{t}\right\|_{\mathfrak{B}}\leq \mathcal{P}(t-\nu)\sup\{\left\|x(s)\right\|_{\mathbb{X}}: \nu 
			\leq s\leq t\}+\mathcal{Q}(t-\nu)\left\|x_{\nu}\right\|_{\mathfrak{B}},$ where $\mathcal{P},\ \mathcal{Q}:[0, \infty)\rightarrow [0, \infty)$ such that $\mathcal{P}$ is continuous, $\mathcal{Q}$ is locally bounded, and both $\mathcal{P},\mathcal{Q}$ are independent of $x(\cdot).$
		\end{itemize}
		\item [(A2)] The space $\mathfrak{B}$ is complete.
	\end{enumerate}  
	For any $\psi\in \mathfrak{B},$ the function $\psi_{t}, \ t\leq 0,$ is defined by  $\psi_{t}(\theta)=\psi(t+\theta),\  \theta \in (-\infty, 0].$ If the function $x(\cdot)$ fulfills the axiom (A1) with $x_{0}=\psi,$ then we can extend the mapping $t\mapsto x_{t}$ by taking $x_{t}=\psi_{t}, \  t\leq0$, to the whole interval $(-\infty, T].$ Moreover, let us define a set 
	\begin{align*}
		\mathfrak{R}(\varrho^{-})&=\{\varrho(s, \psi):\  \varrho(s, \psi)\leq 0, \ \text{ for }\  (s, \psi)\in J \times \mathfrak{B}\},
	\end{align*} 
	where the function $\varrho$ is  the same as the one defined in section \ref{intro}. Further, we assume that the function $\psi_{t}:\mathfrak{R}(\varrho^{-})\to\mathfrak{B}$ is continuous in $t$, and there exists a continuous and bounded function $\varTheta^{\psi}: \mathfrak{R}(\varrho^{-})\rightarrow (0, \infty)$ such that  
	\begin{align*}
		\left\|\psi_{t}\right\|_{\mathfrak{B}}&\leq \varTheta^{\psi}(t)\left\|\psi\right\|_{\mathfrak{B}}.
	\end{align*}
	Let us define the set 
	\begin{align*} 
		\mathrm{PC}(J;\mathbb{X})&:=\Big\{x:J \rightarrow \mathbb{X} : x\vert_{t\in I_i}\in\mathrm{C}(I_i;\mathbb{X}),\ I_i:=(\tau_i, \tau_{i+1}],\ i=0,1,\ldots,p,\\& \qquad  x(\tau_i^+) \mbox{ and }\ x(\tau_i^-) \mbox{ exist for each }\ i=1,\ldots,p, \ \mbox{and satisfy}\ x(\tau_i)=x(\tau_i^-)\Big\}, 
	\end{align*}
	with the norm $\left\|x\right\|_{\mathrm{PC}(J;\mathbb{X})}:=\sup\limits_{t\in J}\left\|x(t)\right\|_{\mathbb{X}}$.
	\begin{lem}[Lemma 2.3, \cite{ER}]\label{lema2.1}
		Let $x:(-\infty,T]\rightarrow \mathbb{X}$ be a function such that $x_0=\psi$ and $x|_J\in \mathrm{PC}(J;\mathbb{X})$. Then
		$$\left\|x_s\right\|_\mathfrak{B}\le K_1\left\|\psi\right\|_\mathfrak{B}+K_2\sup\{\left\|x(\theta)\right\|_\mathbb{X}:\theta \in [0,\max\{0,s\}]\},\ s\in\mathfrak{R}(\varrho^-)\cup J,$$
		where $$K_1=\sup_{t \in \mathfrak{R}(\varrho-)}\varTheta^\psi(t)+\sup_{t\in J} \mathcal{Q}(t),\ K_2=\sup_{t\in J}\mathcal{P}(t). $$
	\end{lem}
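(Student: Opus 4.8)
The plan is to prove the estimate by splitting the range of $s$ according to its sign, since the two pieces of $\mathfrak{R}(\varrho^-)\cup J$ correspond exactly to $s\le 0$ and $s\ge 0$, and the quantity $\max\{0,s\}$ is designed precisely to unify both regimes into a single inequality. No fixed-point or compactness machinery is needed; the statement is a direct bookkeeping consequence of the phase-space axioms together with the standing bound on $\psi_t$.

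First I would treat the case $s\in\mathfrak{R}(\varrho^-)$, so that $s\le 0$. Here the extension of $x$ to $(-\infty,0]$ via $x_t=\psi_t$ gives $x_s=\psi_s$, and the standing hypothesis on the phase space furnishes the bound $\left\|\psi_s\right\|_\mathfrak{B}\le \varTheta^{\psi}(s)\left\|\psi\right\|_\mathfrak{B}$. Dominating $\varTheta^{\psi}(s)$ by its supremum over $\mathfrak{R}(\varrho^-)$, which is in turn bounded by $K_1$, I obtain $\left\|x_s\right\|_\mathfrak{B}\le K_1\left\|\psi\right\|_\mathfrak{B}$. Since $\max\{0,s\}=0$ and the second term on the right-hand side of the asserted inequality is nonnegative, the claim follows at once in this regime.

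Next I would consider $s\in J$, so that $s\ge 0$ and $\max\{0,s\}=s$. Applying axiom (A1)(ii) with $\nu=0$ (which is legitimate precisely because the hypotheses give $x_0=\psi\in\mathfrak{B}$ and $x|_J\in\mathrm{PC}(J;\mathbb{X})$) yields
\begin{align*}
\left\|x_s\right\|_\mathfrak{B}\le \mathcal{P}(s)\sup\{\left\|x(\theta)\right\|_\mathbb{X}:0\le\theta\le s\}+\mathcal{Q}(s)\left\|x_0\right\|_\mathfrak{B}.
\end{align*}
Using $x_0=\psi$ together with the uniform dominations $\mathcal{P}(s)\le \sup_{t\in J}\mathcal{P}(t)=K_2$ and $\mathcal{Q}(s)\le \sup_{t\in J}\mathcal{Q}(t)\le K_1$, this is exactly the asserted estimate. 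Combining the two cases completes the argument.

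As for the main obstacle: the result involves no serious analytic difficulty, being a clean case analysis. The only points demanding care are the correct invocation of the extension $x_t=\psi_t$ for $t\le 0$ in the first case, the verification that axiom (A1) is genuinely applicable on $J$ in the second, and the bookkeeping that matches the pointwise functions $\varTheta^{\psi}$, $\mathcal{P}$, $\mathcal{Q}$ to their uniform bounds $K_1$ and $K_2$, so that a single inequality valid on all of $\mathfrak{R}(\varrho^-)\cup J$ emerges from the two separate regimes.
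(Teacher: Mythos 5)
Your proof is correct, and the case split ($s\in\mathfrak{R}(\varrho^-)$ giving $x_s=\psi_s$ bounded via $\varTheta^\psi$, versus $s\in J$ handled by axiom (A1)(ii) with $\nu=0$ and $x_0=\psi$) is exactly the standard argument. Note that the paper itself supplies no proof here — it imports the lemma verbatim as Lemma 2.3 of the cited reference \cite{ER} — and your argument is essentially the one found there, so there is nothing to reconcile between the two approaches.
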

	\begin{Ex}\label{exm2.8}
		Let $\mathfrak{B}=\mathcal{PC}_{g}(\mathbb{X})$ be the space consisting of all mappings $\phi:(-\infty,0]\to\mathbb{X}$ such that $\phi$ is left continuous, $\phi\vert_{[-q,0]}\in \mathcal{PC}([-q,0];\mathbb{X}),$ for $q>0,$ and $\int_{-\infty}^{0}\frac{\left\|\phi(\theta)\right\|_{\mathbb{X}}}{g(\theta)}\mathrm{d}\theta<\infty.$ The norm  $\left\|\cdot\right\|_{\mathfrak{B}}$ is defined as 
		\begin{align}
			\label{Bnor}\left\|\phi\right\|_{\mathfrak{B}}:=\int_{-\infty}^{0}\frac{\left\|\phi(\theta)\right\|_{\mathbb{X}}}{g(\theta)}\mathrm{d}\theta,
		\end{align} where the function $g:(-\infty,0]\to[1,\infty)$ satisfies the following conditions:
		\begin{enumerate}
			\item $g(\cdot)$ is continuous with $g(0)=1$, and $\lim\limits_{\theta\to-\infty} g(\theta)=\infty,$
			\item the function $\mathcal{O}(t):=\sup\limits_{-\infty<\theta\leq -t}\frac{g(t+\theta)}{g(\theta)}$ is locally bounded for $t\geq 0.$
		\end{enumerate}
	\end{Ex}
	\subsection{Mild solution, resolvent operator and assumptions}In order to define the mild solution of the system \eqref{1.1}, we first  consider Mainardi's Wright-type function $\mathrm{M}_{\upsilon}$ (cf. \cite{Fm},\cite{Ip})
	\begin{align*}
		\mathrm{M}_{\upsilon}(z)=\sum_{k=0}^{\infty}\frac{(-z)^{k}}{k!\Gamma(1-\upsilon(n+1))},\ \upsilon\in(0,1),\  z\in\mathbb{C}.
	\end{align*}
	Note that, for any $t>0$, $\mathrm{M}_{\upsilon}(t)\ge 0$ and it satisfies  the following: $$\int_{0}^{\infty} s^{c}\mathrm{M}_{\upsilon}(s)\mathrm{d}s=\frac{\Gamma(1+\delta)}{\Gamma(1+\upsilon\delta)},\ \mbox{ for }\ -1<c<\infty.$$ 
	For any $x\in\mathbb{X}$ and $\gamma=\frac{\alpha}{2}$, we define 
	\begin{align}
		\label{2.3}\mathcal{C}_{\gamma}(t)&:=\int_{0}^{\infty}\mathrm{M}_{\gamma}(\theta)\mathrm{C}(t^{\gamma}\theta)\mathrm{d}\theta,\\ \mathcal{T}_{\gamma}(t)&:=\int_{0}^{t}\mathcal{C}_{\gamma}(s)\mathrm{d}s, \\  
		\mathcal{S}_{\gamma}(t)&:=\int_{0}^{\infty} \gamma\theta\mathrm{M}_{\gamma}(\theta)\mathrm{S}(t^{\gamma}\theta)\mathrm{d}\theta.
	\end{align}
	Next, we discuss some important properties of the operators $\mathcal{C}_{\gamma}(t), \mathcal{T}_{\gamma}(t)$ and $\mathcal{S}_{\gamma}(t)$ for $t\ge0$ in the following lemma.
	\begin{lem}(\cite{Yz})\label{lem2.11}
		The operators $\mathcal{C}_{\gamma}(t), \mathcal{T}_{\gamma}(t)$ and $\mathcal{S}_{\gamma}(t)$ have the following properties:
		\begin{itemize}
			\item [(i)] The operators $\mathcal{C}_{\gamma}(t), \mathcal{T}_{\gamma}(t)$ and $\mathcal{S}_{\gamma}(t)$ are linear. Moreover, for all $t\geq 0$,
			\begin{align*}
				\left\|\mathcal{C}_{\gamma}(t)\right\|_{\mathcal{L}(\mathbb{X})}\le M, \ \ \left\|\mathcal{T}_{\gamma}(t)\right\|_{\mathcal{L}(\mathbb{X})}\le Mt,\ \ \left\|\mathcal{S}_{\gamma}(t)\right\|_{\mathcal{L}(\mathbb{X})}\le\frac{M}{\Gamma(2\gamma)}t^{\gamma}.
			\end{align*}
			\item [(ii)] For all $t\ge 0$, the operator $\mathcal{C}_{\gamma}(t)$ is strongly continuous and the operators $\mathcal{T}_{\gamma}(t)$, $\mathcal{S}_{\gamma}(t)$ and $t^{\gamma-1}\mathcal{S}_{\gamma}(t)$ are uniformly continuous.
			\item [(iii)] If $\mathrm{S}(t)$ is compact for $t\ge0$, then the operator $\mathcal{S}_{\gamma}(t)$ is also compact for $t\ge0$. 
		\end{itemize}
	\end{lem}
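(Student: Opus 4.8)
The plan is to handle the three items in turn: the first two are routine estimates resting on the uniform bound $\|\mathrm{C}(t)\|_{\mathcal{L}(\mathbb{X})}\le M$ from Lemma \ref{lem2.1} together with the moment identity $\int_0^\infty\theta^c\mathrm{M}_\gamma(\theta)\,\mathrm{d}\theta=\frac{\Gamma(1+c)}{\Gamma(1+\gamma c)}$, while the third is the genuine obstacle. For (i), linearity is immediate since $\mathrm{C}(\cdot)$ and $\mathrm{S}(\cdot)$ are linear and the defining integrals are linear in the integrand. For the norm bounds I would pull the norm inside each integral: the estimate on $\mathcal{C}_\gamma$ follows from $\|\mathrm{C}(t^\gamma\theta)\|\le M$ and the normalization $\int_0^\infty\mathrm{M}_\gamma(\theta)\,\mathrm{d}\theta=1$ (the $c=0$ case), and the estimate on $\mathcal{T}_\gamma$ is obtained by integrating the bound for $\mathcal{C}_\gamma$ over $[0,t]$. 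For $\mathcal{S}_\gamma$ I would first record $\|\mathrm{S}(s)\|_{\mathcal{L}(\mathbb{X})}\le Ms$ for $s\ge0$ (integrate $\|\mathrm{C}(\cdot)\|\le M$ in the definition of the sine family), so that $\|\mathrm{S}(t^\gamma\theta)\|\le Mt^\gamma\theta$; inserting this and using the moment identity with $c=2$, i.e. $\int_0^\infty\theta^2\mathrm{M}_\gamma(\theta)\,\mathrm{d}\theta=\frac{2}{\Gamma(1+2\gamma)}$, gives $\|\mathcal{S}_\gamma(t)\|\le\frac{2M\gamma}{\Gamma(1+2\gamma)}t^\gamma=\frac{M}{\Gamma(2\gamma)}t^\gamma$ after the reduction $\Gamma(1+2\gamma)=2\gamma\,\Gamma(2\gamma)$.

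For (ii), the strong continuity of $\mathcal{C}_\gamma$ I would obtain from dominated convergence: for fixed $x$, the integrand $\mathrm{M}_\gamma(\theta)[\mathrm{C}(t^\gamma\theta)-\mathrm{C}(t_0^\gamma\theta)]x$ tends to $0$ pointwise by strong continuity of the cosine family and is dominated by the integrable function $2M\mathrm{M}_\gamma(\theta)\|x\|$. The uniform (operator-norm) continuity of $\mathcal{T}_\gamma$ is the Lipschitz bound $\|\mathcal{T}_\gamma(t_2)-\mathcal{T}_\gamma(t_1)\|\le M|t_2-t_1|$ read off its definition. For $\mathcal{S}_\gamma$ I would use the Lipschitz estimate $\|\mathrm{S}(t_2)-\mathrm{S}(t_1)\|\le M|t_2-t_1|$ of Lemma \ref{lem2.3}(i) to get $\|\mathcal{S}_\gamma(t_2)-\mathcal{S}_\gamma(t_1)\|\le\frac{M}{\Gamma(2\gamma)}|t_2^\gamma-t_1^\gamma|$, which yields operator-norm continuity since $t\mapsto t^\gamma$ is continuous. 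Finally, for $t^{\gamma-1}\mathcal{S}_\gamma(t)$ I would split $t_2^{\gamma-1}\mathcal{S}_\gamma(t_2)-t_1^{\gamma-1}\mathcal{S}_\gamma(t_1)=(t_2^{\gamma-1}-t_1^{\gamma-1})\mathcal{S}_\gamma(t_2)+t_1^{\gamma-1}(\mathcal{S}_\gamma(t_2)-\mathcal{S}_\gamma(t_1))$ and bound each piece away from the origin with the previous estimate; near $t=0$ the bound $\|t^{\gamma-1}\mathcal{S}_\gamma(t)\|\le\frac{M}{\Gamma(2\gamma)}t^{2\gamma-1}\to0$ (valid as $\gamma>\tfrac12$) settles continuity at $0$.

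The main difficulty is (iii). Here I would fix $t>0$ and approximate $\mathcal{S}_\gamma(t)$ in operator norm by the truncated operators $\mathcal{S}_\gamma^{\varepsilon}(t):=\int_\varepsilon^{1/\varepsilon}\gamma\theta\mathrm{M}_\gamma(\theta)\mathrm{S}(t^\gamma\theta)\,\mathrm{d}\theta$. Convergence $\mathcal{S}_\gamma^\varepsilon(t)\to\mathcal{S}_\gamma(t)$ in $\mathcal{L}(\mathbb{X})$ as $\varepsilon\to0^+$ follows from $\|\mathrm{S}(t^\gamma\theta)\|\le Mt^\gamma\theta$ and the finiteness of $\int_0^\infty\theta^2\mathrm{M}_\gamma(\theta)\,\mathrm{d}\theta$, since the contributions of $\int_0^\varepsilon$ and $\int_{1/\varepsilon}^\infty$ to the second moment vanish. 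It then suffices to show each $\mathcal{S}_\gamma^\varepsilon(t)$ is compact. Because $\theta\mapsto\gamma\theta\mathrm{M}_\gamma(\theta)\mathrm{S}(t^\gamma\theta)$ is continuous in operator norm on $[\varepsilon,1/\varepsilon]$ (the sine family is operator-norm Lipschitz by Lemma \ref{lem2.3}(i)), the integral is the operator-norm limit of its Riemann sums; each such sum is a finite linear combination of the operators $\mathrm{S}(t^\gamma\theta_i)$, which are compact by Lemma \ref{lem2.2} under assumption (R3). Since the compact operators form a closed subspace of $\mathcal{L}(\mathbb{X})$, each $\mathcal{S}_\gamma^\varepsilon(t)$ is compact, and therefore so is the operator-norm limit $\mathcal{S}_\gamma(t)$; the case $t=0$ is trivial since $\mathcal{S}_\gamma(0)=0$.
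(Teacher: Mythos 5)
Your proof is correct, but note that the paper itself contains no proof of this lemma to compare against: the statement is imported wholesale from the cited reference \cite{Yz}, so what you have produced is a self-contained reconstruction of the standard argument rather than an alternative to anything in the text. The reconstruction checks out. In (i) the constants are right: the $c=2$ moment $\int_0^\infty\theta^2\mathrm{M}_\gamma(\theta)\,\mathrm{d}\theta=\tfrac{2}{\Gamma(1+2\gamma)}$ combined with $\Gamma(1+2\gamma)=2\gamma\,\Gamma(2\gamma)$ yields exactly $\tfrac{M}{\Gamma(2\gamma)}t^\gamma$, and the bound $\|\mathrm{S}(s)\|_{\mathcal{L}(\mathbb{X})}\le Ms$ you interpolate is the correct consequence of $\mathrm{S}(s)x=\int_0^s\mathrm{C}(\tau)x\,\mathrm{d}\tau$. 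In (ii) the dominated-convergence argument for $\mathcal{C}_\gamma$ and the operator-norm estimates $\|\mathcal{T}_\gamma(t_2)-\mathcal{T}_\gamma(t_1)\|_{\mathcal{L}(\mathbb{X})}\le M|t_2-t_1|$ and $\|\mathcal{S}_\gamma(t_2)-\mathcal{S}_\gamma(t_1)\|_{\mathcal{L}(\mathbb{X})}\le\tfrac{M}{\Gamma(2\gamma)}|t_2^\gamma-t_1^\gamma|$ (via Lemma \ref{lem2.3}(i)) are the natural route; your handling of $t^{\gamma-1}\mathcal{S}_\gamma(t)$ is the one place requiring care, and you correctly isolate it: the map must be assigned the value $0$ at $t=0$, which is legitimate precisely because $2\gamma-1>0$, i.e.\ because $\alpha>1$. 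In (iii) the truncation-plus-Riemann-sum scheme is sound: the integrand $\theta\mapsto\gamma\theta\mathrm{M}_\gamma(\theta)\mathrm{S}(t^\gamma\theta)$ is operator-norm continuous on $[\varepsilon,1/\varepsilon]$ (since $\mathrm{M}_\gamma$ is entire and $\mathrm{S}(\cdot)$ is norm-Lipschitz), each Riemann sum is a finite linear combination of compact operators, compact operators are norm-closed in $\mathcal{L}(\mathbb{X})$, and the tail estimate uses only the finiteness of the second moment. One cosmetic remark: in (iii) you route through Lemma \ref{lem2.2} and assumption (R3), but the hypothesis of item (iii) already hands you compactness of $\mathrm{S}(t)$ directly, so that detour can be dropped.
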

	\begin{Def}
		A function $x(\cdot;\psi,\eta,u):(-\infty, T]\to\mathbb{X}$ is called a  \emph{mild solution} of \eqref{1.1}, if $x(t)=\psi(t)$ on $t\in(-\infty,0]$ (where $\psi\in\mathfrak{B}$) and $x(\cdot)$ satisfies the following:
		\begin{equation}\label{2.2}
			x(t)=
			\begin{dcases}
				\mathcal{C}_{\gamma}(t)\psi(0)+\mathcal{T}_{\gamma}(t)\eta+\int_{0}^{t}(t-s)^{\gamma-1}\mathcal{S}_{\gamma}(t-s)\left[\mathrm{B}u(s)+f(s,x_{\varrho(s, x_s)})\right]\mathrm{d}s,\\ \qquad\qquad\qquad\qquad\qquad\qquad\qquad\qquad\qquad \ t\in[0, \tau_1],\\
				h_j(t, x(\tau_j^-)),\qquad\qquad\qquad\qquad\qquad\qquad\quad t\in(\tau_j, s_j],\ j=1,\ldots,p,\\
				\mathcal{C}_{\gamma}(t-s_j)h_j(s_j, x(\tau_j^-))+\mathcal{T}_{\gamma}(t-s_j)h'_j(s_j, x(\tau_j^-))\\ \quad -\int_{0}^{s_j}(s_j-s)^{\gamma-1}\mathcal{S}_{\gamma}(s_j-s)\left[\mathrm{B}u(s)+f(s,x_{\varrho(s, x_s)})\right]\mathrm{d}s \\\quad+\int_{0}^{t}(t-s)^{\gamma-1}\mathcal{S}_{\gamma}(t-s)\left[\mathrm{B}u(s)+f(s,x_{\varrho(s, x_s)})\right]\mathrm{d}s, \\\qquad\qquad\qquad\qquad\qquad\qquad\qquad\qquad\qquad \ t\in(s_j,\tau_{j+1}],\ j=1,\ldots,p.
			\end{dcases}
		\end{equation}
	\end{Def}
	\begin{Def}
		The system \eqref{1.1} is said to be approximately controllable on $J$, if for any initial function $\psi \in \mathfrak{B}, x_T\in\mathbb{X}$ and given $\epsilon>0$, there exist a control function $u\in\mathrm{L}^{2}(J;\mathbb{X})$ such that $$\left\|x(T)-x_T\right\|_{\mathbb{X}}\le\epsilon,$$ where $x(\cdot)$ is the mild solution of the system \eqref{1.1} with the control $u(\cdot)$.
	\end{Def}
	To investigate the approximate controllability of the system \eqref{1.1}, we first introduce the following operators:
	\begin{equation}\label{2.1}
		\left\{
		\begin{aligned}
			L_0^Tu&:=\int^{T}_{0}(T-t)^{\gamma-1}\mathcal{S}_{\gamma}(T-t)\mathrm{B}u(t)\mathrm{d}t,\\
			\Phi_{0}^{T}&:=\int^{T}_{0}(T-t)^{\gamma-1}\mathcal{S}_{\gamma}(T-t)\mathrm{B}\mathrm{B}^*\mathcal{S}_{\gamma}(T-t)^*\mathrm{d}t,\\
			\mathcal{R}(\lambda,\Phi_{0}^{T})&:=(\lambda \mathrm{I}+\Phi_{0}^{T}\mathscr{J})^{-1},\ \lambda > 0,
		\end{aligned}
		\right.
	\end{equation}
	where $\mathrm{B}^{*}$ and $\mathcal{S}_{\gamma}(t)^*$ represent the adjoint operators of $\mathrm{B}$ and $\mathcal{S}_{\gamma}(t),$ respectively. It is straightforward that the operators $L_0^T$ and $\Phi_{0}^{T}$ are linear and bounded. Moreover, the mapping $\mathscr{J} : \mathbb{X} \rightarrow 2^{\mathbb{X}^*}$ is defined as 
	\begin{align*}
		\mathscr{J}[x]&=\{x^* \in \mathbb{X}^* : \langle x, x^* \rangle=\left\|x\right\|_{\mathbb{X}}^2= \left\|x^*\right\|_{\mathbb{X}^*}^2 \}, \ \mbox{ for all }\  x\in \mathbb{X},
	\end{align*}
which	is known as the duality mapping. The notation  $\langle \cdot, \cdot  \rangle $ denotes the duality pairing between $\mathbb{X}$ and its dual $\mathbb{X}^*$. Since $\mathbb{X}$ is a reflexive Banach space, then $\mathbb{X}^*$ becomes strictly convex (see \cite{AA}), which ensures that the mapping $\mathscr{J}$ is strictly monotonic, bijective and demicontinuous, that is, $$x_k\to x\ \mbox{ in }\ \mathbb{X}\ \mbox{ implies }\ \mathscr{J}[x_k] \xrightharpoonup{w} \mathscr{J}[x]\ \mbox{ in } \ \mathbb{X}^*\ \mbox{ as }\ k\to\infty.$$ 
	\begin{rem}\label{rem2.8}
		If $\mathbb{X}$ is a separable Hilbert space, which is identified by its own dual, then the resolvent operator is defined as $\mathcal{R}(\lambda,\Phi_{0}^{T}):=(\lambda \mathrm{I}+\Phi_{0}^{T})^{-1},\ \lambda > 0$. 
	\end{rem}
	\begin{lem}[Lemma 2.2 \cite{M}]\label{lem2.9}
		For $\lambda>0$ and every $h\in\mathbb{X}$, the equation
		\begin{align}\label{2.4}\lambda z_{\lambda}+\Phi_{0}^{T}\mathscr{J}[z_{\lambda}]=\lambda h,\end{align}
		has a unique solution  which is given as 
		$$z_{\lambda}(h)=\lambda(\lambda \mathrm{I}+\Phi_{0}^{T}\mathscr{J})^{-1}(h)=\lambda\mathcal{R}(\lambda,\Phi_{0}^{T})(h)$$ and satisfies the following  \begin{align}\label{2.5}
			\left\|z_{\lambda}(h)\right\|_{\mathbb{X}}=\left\|\mathscr{J}[z_{\lambda}(h)]\right\|_{\mathbb{X}^*}\leq\left\|h\right\|_{\mathbb{X}}.
		\end{align}
		\begin{proof}
			Since the operator $\Phi_0^T$ is linear and bounded, then proceeding similar way as in the proof of Lemma 2.2 \cite{M}, one can complete the proof.
		\end{proof}
	\end{lem}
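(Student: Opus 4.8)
The plan is to read \eqref{2.4} as a perturbed duality-map equation and to solve it with the theory of monotone, coercive operators. The first thing I would record is that $\Phi_0^T$ is not only bounded and linear but also \emph{nonnegative}: moving the adjoints across the pairing, for every $x^*\in\mathbb{X}^*$,
\begin{align*}
\langle \Phi_0^T x^*, x^*\rangle = \int_0^T (T-t)^{\gamma-1}\left\|\mathrm{B}^*\mathcal{S}_{\gamma}(T-t)^* x^*\right\|_{\mathbb{U}}^2\,\mathrm{d}t \ge 0.
\end{align*}
Thus $\Phi_0^T:\mathbb{X}^*\to\mathbb{X}$ is monotone, and this sign is exactly what makes the scheme work.

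For existence I would not attack \eqref{2.4} directly, since the map $z\mapsto \lambda z+\Phi_0^T\mathscr{J}[z]$ sends $\mathbb{X}$ into $\mathbb{X}$, which is the wrong setting for monotonicity. Instead I substitute $w=\mathscr{J}[z]$, i.e. $z=\mathscr{J}^{-1}[w]$ using the bijectivity of $\mathscr{J}$, so that \eqref{2.4} becomes $\lambda\mathscr{J}^{-1}[w]+\Phi_0^T w=\lambda h$. The operator $\mathcal{A}:=\lambda\mathscr{J}^{-1}+\Phi_0^T$ now maps $\mathbb{X}^*$ into its dual $\mathbb{X}$ (by reflexivity). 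Here $\mathscr{J}^{-1}$ is the duality map of $\mathbb{X}^*$, hence monotone and demicontinuous, so $\mathcal{A}$ is monotone, demicontinuous, and coercive because $\langle\mathcal{A}w,w\rangle\ge\lambda\|w\|_{\mathbb{X}^*}^2$. The Browder--Minty surjectivity theorem then produces a $w\in\mathbb{X}^*$ with $\mathcal{A}w=\lambda h$, and $z_\lambda=\mathscr{J}^{-1}[w]$ solves \eqref{2.4}. Uniqueness follows by subtracting two solutions and pairing with $\mathscr{J}[z_1]-\mathscr{J}[z_2]\in\mathbb{X}^*$: the $\Phi_0^T$-term is nonnegative and the remaining term $\lambda\langle z_1-z_2,\mathscr{J}[z_1]-\mathscr{J}[z_2]\rangle$ is strictly positive unless $z_1=z_2$, by the strict monotonicity of $\mathscr{J}$. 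This shows $(\lambda\mathrm{I}+\Phi_0^T\mathscr{J})^{-1}$ exists; since $\mathscr{J}$ is positively homogeneous, so is $\lambda\mathrm{I}+\Phi_0^T\mathscr{J}$, which gives $z_\lambda=(\lambda\mathrm{I}+\Phi_0^T\mathscr{J})^{-1}(\lambda h)=\lambda\mathcal{R}(\lambda,\Phi_0^T)(h)$.

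The bound \eqref{2.5} is then the easy part: pairing \eqref{2.4} with $\mathscr{J}[z_\lambda]$ and using $\langle z_\lambda,\mathscr{J}[z_\lambda]\rangle=\|z_\lambda\|_{\mathbb{X}}^2=\|\mathscr{J}[z_\lambda]\|_{\mathbb{X}^*}^2$ together with $\langle\Phi_0^T\mathscr{J}[z_\lambda],\mathscr{J}[z_\lambda]\rangle\ge0$ gives $\lambda\|z_\lambda\|_\mathbb{X}^2\le\lambda\langle h,\mathscr{J}[z_\lambda]\rangle\le\lambda\|h\|_\mathbb{X}\|z_\lambda\|_\mathbb{X}$, whence $\|z_\lambda\|_\mathbb{X}\le\|h\|_\mathbb{X}$, while the identity $\|z_\lambda\|_\mathbb{X}=\|\mathscr{J}[z_\lambda]\|_{\mathbb{X}^*}$ is built into the duality map. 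I expect the main obstacle to be the existence step: verifying the Browder--Minty hypotheses (especially coercivity and demicontinuity of the nonlinear part) and keeping track that the single-valuedness and strict monotonicity of $\mathscr{J}$ and $\mathscr{J}^{-1}$ are precisely the properties guaranteed by the reflexivity of $\mathbb{X}$ and the strict convexity of $\mathbb{X}^*$ recorded before the statement. The estimate and the homogeneity argument identifying $z_\lambda$ with $\lambda\mathcal{R}(\lambda,\Phi_0^T)h$ are then routine.
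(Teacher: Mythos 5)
Your proof is correct, and it takes essentially the same route as the paper: the paper's own ``proof'' is only a deferral to Lemma 2.2 of \cite{M}, and the argument you give --- substituting $w=\mathscr{J}[z]$ so that \eqref{2.4} becomes the equation $\lambda\mathscr{J}^{-1}[w]+\Phi_{0}^{T}w=\lambda h$ for the monotone, demicontinuous, coercive operator $\lambda\mathscr{J}^{-1}+\Phi_{0}^{T}:\mathbb{X}^*\to\mathbb{X}$, invoking Browder--Minty surjectivity, getting uniqueness from the strict monotonicity of $\mathscr{J}$ and nonnegativity of $\Phi_{0}^{T}$, and obtaining \eqref{2.5} by pairing with $\mathscr{J}[z_{\lambda}]$ --- is precisely the argument behind that cited lemma, here written out in full (including the positive homogeneity of $\mathscr{J}$ needed to identify $z_{\lambda}$ with $\lambda\mathcal{R}(\lambda,\Phi_{0}^{T})h$). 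In short, your proposal supplies the details the paper outsources to \cite{M}, with no gaps.
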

	We need to impose the following assumptions to obtain the approximate controllability of the system \eqref{1.1}.
	\begin{Ass}\label{as2.1} 
		\begin{enumerate}
			\item [\textit{($H0$)}] For every $h\in\mathbb{X}$, $z_\lambda=z_{\lambda}(h)=\lambda\mathcal{R}(\lambda,\Phi_{0}^{T})(h) \rightarrow 0$ as $\lambda\downarrow 0$ in strong topology, where $z_{\lambda}(h)$ is a solution of the equation \eqref{2.4}.
			\item [\textit{(H1)}] 
			\begin{enumerate} 
				\item [(i)] Let $x:(-\infty, T]\rightarrow \mathbb{X}$ be such that $x_0=\psi$ and $x|_{J}\in \mathrm{PC}(J;\mathbb{X}).$ The mapping $t\mapsto f(t, x_{\varrho(t,x_{t})}) $ is strongly measurable on $J$  and the function $f(t,\cdot): \mathfrak{B}\rightarrow \mathbb{X}$ is continuous for a.e. $t\in J$. Also, the map $t\mapsto f(s,x_{t})$ is continuous on $\mathfrak{R}(\varrho^-) \cup J,$ for every $s\in J$. 
				\item [(ii)] For each positive integer $r$, there exists a constant $\delta\in[0,\gamma]$ and a function $\phi_r\in \mathrm{L}^{\frac{1}{\delta}}(J;\mathbb{R^{+}})$, such that $$  \sup_{\left\| \psi\right\|_{\mathfrak{B}}\leq r} \left\|f(t, \psi)\right\|_{\mathbb{X}}\leq \phi_{r}(t), \mbox{ for a.e.} \ t \in J \mbox{ and } \psi\in \mathfrak{B},$$ with
				$$ \liminf_{r \rightarrow \infty } \frac {\left\|\phi_r\right\|_{\mathrm{L}^{\frac{1}{\delta}}(J;\mathbb{R^+})}}{r} = \zeta< \infty. $$ 
			\end{enumerate}
			\item [\textit{(H2)}] The impulses $ h_j:[\tau_j,s_j]\times\mathbb{X}\to\mathbb{X}$, for  $j=1,\dots,p$, are such that 
			\begin{itemize}
				\item [(i)] the impulses $h_j(\cdot,x):[\tau_j,s_j]\to\mathbb{X}$ are continuously differentiable for each $x\in\mathbb{X}$, 
				\item [(ii)] for all  $t\in[\tau_j,s_j]$, the impulses $h_j(t,\cdot):\mathbb{X}\to\mathbb{X}$ and their derivatives $h'_j(t,\cdot):\mathbb{X}\to\mathbb{X}$ are completely continuous,
				\item[(iii)] $\left\|h_{j}(t,x)\right\|_{\mathbb{X}}\le \kappa_{j},\ \left\|h'_{j}(t,x)\right\|_{\mathbb{X}}\le \vartheta_{j}, \text{for all}\ x\in\mathbb{X},\ t\in [\tau_{j},s_{j}],\ j=1,\cdots,p$, where $\kappa_{j}$'s and $\vartheta_{j}$'s are positive constants.
			\end{itemize} 
		\end{enumerate}
	\end{Ass}
	The following version of the discrete Gronwall-Bellman lemma (cf. \cite{Ch}) is used in the sequel.  
	\begin{lem}\label{lem2.13}
		If $\{f_n\}_{n=0}^{\infty}, \{g_n\}_{n=0}^{\infty}$ and $\{w_n\}_{n=0}^{\infty}$ are non-negative sequences and $$f_n\le g_n+\sum_{0\le k<n}w_kf_k,\ \text{ for }\ n\geq 0,$$  then $$f_n\le g_n+\sum_{0\le k<n}g_kw_k\exp\left(\sum_{k<j<n}g_j\right),\text{  for }\ n\geq 0.$$
	\end{lem}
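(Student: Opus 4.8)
The plan is to prove the estimate by strong induction on $n$, unfolding the recursive hypothesis $f_n\le g_n+\sum_{0\le k<n}w_kf_k$. For the base case $n=0$ the sum over $0\le k<0$ is empty, so the hypothesis already yields $f_0\le g_0$, which is the asserted bound. For the inductive step I would substitute, for each $k<n$, the bound previously established for $f_k$ into the right-hand side and simplify.

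The most transparent route is to pass first through the sharper \emph{product form}
\[
f_n\le g_n+\sum_{0\le k<n}g_kw_k\prod_{k<j<n}(1+w_j),
\]
which I would establish by induction and from which the exponential estimate follows immediately. The algebraic engine is the telescoping identity $\prod_{i<j<n}(1+w_j)-1=\sum_{i<k<n}w_k\prod_{i<j<k}(1+w_j)$. Inserting the product-form inductive hypothesis for each $f_k$ into $\sum_{k<n}w_kf_k$, interchanging the order of summation so as to sum over $i<k<n$, and applying this identity collapses the extra contribution to $\sum_{i<n}g_iw_i\bigl[\prod_{i<j<n}(1+w_j)-1\bigr]$; the stray linear terms $\sum_{k<n}g_kw_k$ then cancel, leaving exactly the product form. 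Finally $1+x\le e^{x}$ gives $\prod_{k<j<n}(1+w_j)\le\exp\bigl(\sum_{k<j<n}w_j\bigr)$, converting the product into the claimed exponential.

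I expect the only real obstacle to be bookkeeping: the inductive hypothesis is itself a double sum, so one must keep the nested index ranges $i<k<n$ straight when substituting and reindexing, and verify the telescoping cancellation carefully. No analytic ingredient beyond $1+x\le e^{x}$ enters.

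One caveat I would record: the exponent produced by this argument carries the \emph{weights} $w_j$, that is $\exp\bigl(\sum_{k<j<n}w_j\bigr)$, and this is the form that is actually valid. The conclusion exactly as typeset, with $\sum_{k<j<n}g_j$ in the exponent, does not hold in general: taking $g_n\equiv 0.1$ and $w_n\equiv 10$, the hypothesis (with equalities) permits $f_3=133.1$, whereas the $g_j$-exponent bound would force $f_3\le 3.43$. I therefore read the $g_j$ appearing in the statement as a misprint for $w_j$ and prove the latter, which is the classical discrete Gronwall--Bellman inequality.
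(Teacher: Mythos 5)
Your proof is correct, but note that the paper itself offers no proof of Lemma \ref{lem2.13} to compare against: it simply quotes the discrete Gronwall--Bellman inequality from Yeh \cite{Ch}. Your argument --- strong induction through the sharper product form $f_n\le g_n+\sum_{0\le k<n}g_kw_k\prod_{k<j<n}(1+w_j)$, using the telescoping identity $\prod_{i<j<n}(1+w_j)-1=\sum_{i<k<n}w_k\prod_{i<j<k}(1+w_j)$ and then $1+x\le e^x$ --- is exactly the classical route, and all the steps you sketch (the cancellation of the stray terms $\sum_{k<n}g_kw_k$, the empty-sum base case) check out.

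Your caveat is the most valuable part of the review, and it is right: as typeset, with $\exp\bigl(\sum_{k<j<n}g_j\bigr)$, the lemma is false, and your counterexample is verified ($g_n\equiv 0.1$, $w_n\equiv 10$ gives, with equality in the hypothesis, $f_0=0.1$, $f_1=1.1$, $f_2=12.1$, $f_3=133.1$, while the $g_j$-exponent bound gives $0.1+e^{0.2}+e^{0.1}+1\approx 3.43$). The correct exponent is $\sum_{k<j<n}w_j$; the misprint is readily explained by the notation in the source, where the weight sequence in the hypothesis is itself called $g$. It is worth adding that the paper's subsequent application in the proof of Theorem \ref{thm4.3} survives your correction: there the weights are constant, $w_j\equiv R$, so the corrected lemma gives $f_j\le N_j+R\sum_{k<j}N_ke^{(j-k-1)R}$, and since $(j+k)(j-k-1)/2\ge j-k-1$ for all integers $0\le k<j$, the constants $C_j$ in \eqref{4.5} and the smallness condition \eqref{cnd} remain valid (merely slightly weaker) upper bounds.
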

	\section{Fractional order linear control problem}\label{sec3}\setcounter{equation}{0} This section deals with the linear fractional control problem and investigate the approximate controllability of the linear system. To achieve this goal, first we consider the linear-regulator problem with the cost functional defined as 
	\begin{equation}\label{3.1}
		\mathscr{G}(x,u)=\left\|x(T)-x_{T}\right\|^{2}_{\mathbb{X}}+\lambda\int^{T}_{0}(T-t)^{\gamma-1}\left\|u(t)\right\|^{2}_{\mathbb{U}}\mathrm{d}t,
	\end{equation}
	where $\gamma=\frac{\alpha}{2}$ with $\alpha\in(1,2)$, $\lambda >0$, $x_{T}\in \mathbb{X}$ and $x(\cdot)$ is the solution of the linear fractional control system:
	\begin{equation}\label{3.2}
		\left\{
		\begin{aligned}
			^CD_{0,t}^{\alpha}x(t)&= \mathrm{A}x(t)+\mathrm{B}u(t),\ t\in J,\\
			x(0)&=v,\ x'(0)=w,
		\end{aligned}
		\right.
	\end{equation}
	with the control $u(\cdot)\in \mathbb{U}$. Since $\mathrm{B}u\in\mathrm{L}^1(J;\mathbb{X})$ for any $u \in\mathrm{L}^{2}(J;\mathbb{U})=\mathcal{U}_{\mathrm{ad}}$ (admissible control class), then, $(t-s)^{\gamma-1}\mathcal{S}_{\gamma}(t-s)\mathrm{B}u(s)$ is integrable. Hence, the above linear system possess a unique mild solution $x\in\mathrm{C}(J;\mathbb{X})$ (see section 4.2, Chapter 4, \cite{P}) given by
	\begin{align}
		x(t)&=\mathcal{C}_{\gamma}(t)v+\mathcal{T}_{\gamma}(t)w+\int_{0}^{t}(t-s)^{\gamma-1}\mathcal{S}_{\gamma}(t-s)\mathrm{B}u(s)\mathrm{d}s,\ t\in J.
	\end{align}
	Next, we define the {\em admissible class} $\mathcal{A}_{\mathrm{ad}}$ for the optimal control problem \eqref{3.1}-\eqref{3.2} as $$\mathcal{A}_{\mathrm{ad}}=\{(x,u):x \text{ is the unique mild solution of } \eqref{3.2} \text{ with the control  }u\in \mathcal{U}_{\mathrm{ad}}\}.$$ For any $u \in\mathrm{L}^{2}(J;\mathbb{U}),$ the existence of a unique mild solution ensures  that the set $\mathcal{A}_{\mathrm{ad}}$ is nonempty.	Our next aim is to prove the existence of an optimal pair, which minimize the cost functional \eqref{3.1}.
	\begin{theorem}\label{th1} For given $v,w\in\mathbb{X}$, there exists a unique optimal pair  $(x^0,u^0)\in\mathcal{A}_{\mathrm{ad}}$ of the problem:
		\begin{align}\label{F}
			\min\limits_{(x,u)\in \mathcal{A}_{\mathrm{ad}}}\mathscr{G}(x, u).
		\end{align}
	\end{theorem}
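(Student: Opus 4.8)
The plan is to use the direct method of the calculus of variations. Since $\mathscr{G}(x,u)\ge 0$ for every $(x,u)\in\mathcal{A}_{\mathrm{ad}}$ and $\mathcal{A}_{\mathrm{ad}}\neq\emptyset$, the infimum $m:=\inf_{(x,u)\in\mathcal{A}_{\mathrm{ad}}}\mathscr{G}(x,u)$ is a finite nonnegative number. First I would pick a minimizing sequence $\{(x^n,u^n)\}\subset\mathcal{A}_{\mathrm{ad}}$ with $\mathscr{G}(x^n,u^n)\to m$. The second (weighted) term of the cost then yields a uniform bound on $\int_0^T (T-t)^{\gamma-1}\|u^n(t)\|_{\mathbb{U}}^2\mathrm{d}t$. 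Because $\gamma=\frac{\alpha}{2}\in(\frac12,1)$, the weight satisfies $(T-t)^{\gamma-1}\ge T^{\gamma-1}>0$ on $[0,T]$, so this bound also forces $\{u^n\}$ to be bounded in $\mathrm{L}^2(J;\mathbb{U})$; equivalently, $\{u^n\}$ is bounded in the weighted Hilbert space $\mathcal{H}:=\mathrm{L}^2(J,(T-t)^{\gamma-1}\mathrm{d}t;\mathbb{U})$.

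Since $\mathcal{H}$ is reflexive, I would extract a subsequence, still denoted $\{u^n\}$, with $u^n\rightharpoonup u^0$ weakly in $\mathcal{H}$. Let $x^0$ be the unique mild solution of \eqref{3.2} associated with $u^0$, so that $(x^0,u^0)\in\mathcal{A}_{\mathrm{ad}}$. The terminal state depends affinely on the control through $x(T)=\mathcal{C}_{\gamma}(T)v+\mathcal{T}_{\gamma}(T)w+L_0^Tu$, where only $L_0^Tu$ varies with $u$. A short estimate using $\|\mathcal{S}_{\gamma}(T-t)\|_{\mathcal{L}(\mathbb{X})}\le \frac{M}{\Gamma(2\gamma)}(T-t)^{\gamma}$ together with the Cauchy--Schwarz inequality shows that $L_0^T:\mathcal{H}\to\mathbb{X}$ is bounded, hence weakly continuous (it maps weakly convergent sequences to weakly convergent ones); therefore $L_0^Tu^n\rightharpoonup L_0^Tu^0$ and consequently $x^n(T)\rightharpoonup x^0(T)$ weakly in $\mathbb{X}$.

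To pass to the limit in the cost I would invoke weak lower semicontinuity of both terms. The map $z\mapsto\|z-x_T\|_{\mathbb{X}}^2$ is convex and continuous, hence weakly lower semicontinuous, so $\|x^0(T)-x_T\|_{\mathbb{X}}^2\le\liminf_n\|x^n(T)-x_T\|_{\mathbb{X}}^2$; likewise $u\mapsto\int_0^T(T-t)^{\gamma-1}\|u(t)\|_{\mathbb{U}}^2\mathrm{d}t=\|u\|_{\mathcal{H}}^2$ is the square of a Hilbert-space norm, hence convex and weakly lower semicontinuous, giving $\|u^0\|_{\mathcal{H}}^2\le\liminf_n\|u^n\|_{\mathcal{H}}^2$. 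Adding these yields $\mathscr{G}(x^0,u^0)\le\liminf_n\mathscr{G}(x^n,u^n)=m$, and since $(x^0,u^0)\in\mathcal{A}_{\mathrm{ad}}$ we also have $\mathscr{G}(x^0,u^0)\ge m$; thus $\mathscr{G}(x^0,u^0)=m$ and $(x^0,u^0)$ is optimal.

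For uniqueness I would exploit strict convexity. The map $u\mapsto x(T)$ being affine, $u\mapsto\|x(T)-x_T\|_{\mathbb{X}}^2$ is convex, while $u\mapsto\lambda\|u\|_{\mathcal{H}}^2$ is strictly convex for $\lambda>0$; hence $u\mapsto\mathscr{G}(x,u)$ is strictly convex and admits at most one minimizer $u^0$, and the uniqueness of the mild solution then fixes $x^0$ (concretely, if two optimal controls differed, evaluating $\mathscr{G}$ at their midpoint would give a value strictly below $m$, a contradiction). The main obstacle is the singular weight $(T-t)^{\gamma-1}$ in the control cost: one must verify that it defines a genuine Hilbert space $\mathcal{H}$ in which the minimizing sequence is bounded, confirm the boundedness of $L_0^T$ on $\mathcal{H}$ so that the terminal map is weakly continuous, and ensure the weighted quadratic term is weakly lower semicontinuous. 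Once these are settled, the remaining steps are routine.
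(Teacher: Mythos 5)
Your proof is correct, and although it is also a direct-method argument built on a minimizing sequence and weak compactness, it departs from the paper's proof at the decisive technical step. The paper extracts weak limits of \emph{both} the controls and the state trajectories (Banach--Alaoglu in $\mathrm{L}^2(J;\mathbb{U})$ and in $\mathrm{L}^2(J;\mathbb{X})$) and then invokes the compactness of the convolution operator $\mathrm{Q}$ of Lemma \ref{lem2.12} --- hence ultimately the compactness hypothesis (R3) --- to upgrade to strong pointwise convergence of the states and to identify the weak state limit with the mild solution driven by $u^0$; weak lower semicontinuity of the joint functional $\mathscr{G}$ then closes the argument. You instead pass to the limit only in the controls (weakly in the weighted space $\mathcal{H}$), \emph{define} $x^0$ outright as the mild solution of \eqref{3.2} for $u^0$ (note $u^0\in\mathcal{H}\subset\mathrm{L}^2(J;\mathbb{U})$ by your lower bound on the weight, so $(x^0,u^0)\in\mathcal{A}_{\mathrm{ad}}$), and use that the affine control-to-terminal-state map $u\mapsto \mathcal{C}_{\gamma}(T)v+\mathcal{T}_{\gamma}(T)w+L_0^Tu$ is bounded, hence weak-to-weak continuous, to get $x^n(T)\rightharpoonup x^0(T)$ in $\mathbb{X}$; weak lower semicontinuity of the two norm-squared terms then suffices. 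What your route buys is independence from any compactness assumption (it works even when $\mathcal{S}_{\gamma}(t)$ is not compact), and it sidesteps a delicate point in the paper's formulation, namely treating $\mathscr{G}$ --- which contains the point evaluation $x\mapsto x(T)$ --- as a continuous convex functional on $\mathrm{L}^2(J;\mathbb{X})\times\mathrm{L}^2(J;\mathbb{U})$ (the paper's strong pointwise convergence rescues that step, but the weak-lsc claim as stated is loose). What the paper's route buys is stronger information, namely strong convergence $x^{n}(t)\to x^0(t)$ for every $t$, and it reuses machinery (Lemma \ref{lem2.12}) that is needed later in the paper anyway. Finally, your uniqueness argument is sharper than the paper's: appealing, as the paper does, to convexity of the cost, linearity of the constraint and convexity of $\mathcal{U}_{\mathrm{ad}}$ only shows that the set of minimizers is convex; it is the \emph{strict} convexity of $u\mapsto\lambda\|u\|_{\mathcal{H}}^2$, combined with the affine dependence of $x(T)$ on $u$ and uniqueness of mild solutions, that forces the minimizer to be unique, and your midpoint argument supplies exactly this.
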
 
	\begin{proof}
		Let us assume $$L := \inf \limits _{u \in \mathcal{U}_{\mathrm{ad}}}\mathscr{G}(x,u).$$ Since $0\leq L< +\infty$, there exists a minimizing sequence $\{u^n\}_{n=1}^{\infty} \in \mathcal{U}_{\mathrm{ad}}$ such that $$\lim_{n\to\infty}\mathscr{G}(x^n,u^n) = L,$$ where $(x^n, u^n)\in\mathcal{A}_{\mathrm{ad}}$, for each $n\in\mathbb{N}$. Moreover, $x^n(\cdot)$ satisfies
		\begin{align}\label{3.4}
			x^n(t)&=\mathcal{C}_{\gamma}(t)v+\mathcal{T}_{\gamma}(t)w+\int_{0}^{t}(t-s)^{\gamma-1}\mathcal{S}_{\gamma}(t-s)\mathrm{B}u^n(s)\mathrm{d}s,\ t\in J.
		\end{align} 
		Since  $0\in\mathcal{U}_{\mathrm{ad}}$, without loss of generality, we may assume that $\mathscr{G}(x^n,u^n) \leq \mathscr{G}(x,0)$, where $(x,0)\in\mathscr{A}_{\mathrm{ad}}$. Using the definition of $\mathscr{G}(\cdot,\cdot)$, we  obtain
		\begin{align}\label{35}
			T^{\gamma-1}\lambda\int^{T}_{0}\left\|u^n(t)\right\|^{2}_{\mathbb{U}}\mathrm{d}t&\leq  \left\|x^n(T)-x_{T}\right\|^{2}_{\mathbb{X}}+\lambda\int^{T}_{0}(T-t)^{\gamma-1}\left\|u^n(t)\right\|^{2}_{\mathbb{U}}\mathrm{d}t\nonumber\\&\leq \left\|x(T)-x_{T}\right\|^{2}_{\mathbb{X}}\leq 2\left(\|x(T)\|_{\mathbb{X}}^2+\|x_T\|_{\mathbb{X}}^2\right)<+\infty.
		\end{align}
		The above estimate guarantees the existence of a large $R>0$ (independent of $n$) such that 
		\begin{align}\label{3.5}\int_0^T \|u^n(t)\|^2_{\mathbb{U}} \mathrm{d} t \leq R < +\infty .\end{align}
		Using the relation \eqref{3.4}, we estimate
		\begin{align*}
			\left\|x^n(t)\right\|_{\mathbb{X}}&\le\left\|\mathcal{C}_{\gamma}(t)v\right\|_{\mathbb{X}}+\left\|\mathcal{T}_{\gamma}(t)w\right\|_{\mathbb{X}}+\left\|\int_{0}^{t}(t-s)^{\gamma-1}\mathcal{S}_{\gamma}(t-s)\mathrm{B}u^n(s)\mathrm{d}s\right\|_{\mathbb{X}}\nonumber\\&\le M\left\|v\right\|_{\mathbb{X}}+Mt\left\|w\right\|_{\mathbb{X}}+\frac{M\tilde{M}}{\Gamma(2\gamma)}\int_{0}^{t}(t-s)^{2\gamma-1}\left\|u^n(s)\right\|_{\mathbb{U}}\mathrm{d}s
			\nonumber\\&\le M\left\|v\right\|_{\mathbb{X}}+MT\left\|w\right\|_{\mathbb{X}}+\frac{M\tilde{M}}{\Gamma(2\gamma)}\left(\frac{T^{4\gamma-1}}{4\gamma-1}\right)^{\frac{1}{2}}\left(\int_{0}^{t}\left\|u^n(s)\right\|_{\mathbb{U}}^2\mathrm{d}s\right)^{\frac{1}{2}}\nonumber\\&\le M\left\|v\right\|_{\mathbb{X}}+MT\left\|w\right\|_{\mathbb{X}}+\frac{M\tilde{M}}{\Gamma(2\gamma)}\left(\frac{T^{4\gamma-1}R}{4\gamma-1}\right)^{\frac{1}{2}}<+\infty,
		\end{align*}
		for all  $t\in J$. Since the space $\mathrm{L}^{2}(J;\mathbb{X})$ is reflexive, then the Banach-Alaoglu theorem yields the existence of a subsequence $\{x^{n_k}\}_{k=1}^{\infty}$ of $\{x^n\}_{n=1}^{\infty}$ such that 
		\begin{align}\label{3.6}
			x^{n_k}\xrightharpoonup{w}x^0\ \mbox{ in }\ \mathrm{L}^{2}(J;\mathbb{X}), \ \mbox{ as }\ k\to\infty. 
		\end{align}
		The estimate \eqref{3.5} implies that the sequence $\{u^n\}_{n=1}^{\infty}$ is uniformly bounded in the space $\mathrm{L}^2(J;\mathbb{U})$. Once again, by applying the Banach-Alaoglu theorem, we can find a subsequence, say,  $\{u^{n_k}\}_{k=1}^{\infty}$ of $\{u^n\}_{n=1}^{\infty}$ such that 
		\begin{align*}
			u^{n_k}\xrightharpoonup{w}u^0\ \mbox{ in }\ \mathrm{L}^2(J;\mathbb{U})=\mathcal{U}_{\mathrm{ad}}, \ \mbox{ as }\ k\to\infty. 
		\end{align*}
		Since the linear operator $\mathrm{B}$ is bounded (continuous) from $\mathbb{U}$ to $\mathbb{X}$, then we have 
		\begin{align}\label{3.7}
			\mathrm{B}	u^{n_k}\xrightharpoonup{w}\mathrm{B}u^0\ \mbox{ in }\ \mathrm{L}^2(J;\mathbb{X}),\ \mbox{ as }\ k\to\infty.
		\end{align}
		Moreover, by using the above  convergences together with the compactness of the operator $(\mathrm{Q}f)(\cdot) =\int_{0}^{\cdot}(\cdot-s)^{\gamma-1}\mathcal{S}_{\gamma}(\cdot-s)f(s)\mathrm{d}s:\mathrm{L}^2(J;\mathbb{X})\rightarrow \mathrm{C}(J;\mathbb{X}) $ (see Lemma \ref{lem2.12} below), we obtain
		\begin{align*}
			\left\|\int_{0}^{t}(t-s)^{\gamma-1}\mathcal{S}_{\gamma}(t-s)\mathrm{B}u^{n_k}(s)\mathrm{d}s-\int_{0}^{t}(t-s)^{\gamma-1}\mathcal{S}_{\gamma}(t-s)\mathrm{B}u(s)\mathrm{d}s\right\|_{\mathbb{X}}\to0,
		\end{align*}
		as $k\to\infty$,	for all $t\in J$. We now estimate 
		\begin{align}\label{3.8}
			&\left\|x^{n_k}(t)-x^*(t)\right\|_{\mathbb{X}}\nonumber\\&=\left\|\int_{0}^{t}(t-s)^{\gamma-1}\mathcal{S}_{\gamma}(t-s)\mathrm{B}u^{n_k}(s)\mathrm{d}s-\int_{0}^{t}(t-s)^{\gamma-1}\mathcal{S}_{\gamma}(t-s)\mathrm{B}u^{0}(s)\mathrm{d}s\right\|_{\mathbb{X}}\nonumber\\&\to 0\ \mbox{ as } \ k\to\infty, \ \mbox{for all }\ t\in J,
		\end{align}
		where 
		\begin{align*}
			x^*(t)=\mathcal{C}_{\gamma}(t)v+\mathcal{T}_{\gamma}(t)w+\int_{0}^{t}(t-s)^{\gamma-1}\mathcal{S}_{\gamma}(t-s)\mathrm{B}u^{0}(s)\mathrm{d}s,\ t\in J.
		\end{align*}
		The above expression guarantees that the function  $x^*\in \mathrm{C}(J;\mathbb{X})$ is the unique mild solution of the equation \eqref{3.2} with the control $u^{0}\in\mathcal{U}_{\mathrm{ad}}$. Using the convergences \eqref{3.6} and \eqref{3.8} and the fact  that the weak limit is unique, we deduce that $x^*(t)=x^0(t),$ for all $t\in J$. Therefore, the function $x^0$ is the unique mild solution of the system \eqref{3.2} with the control $u^{0}\in\mathcal{U}_{\mathrm{ad}}$, so that the whole sequence  $x^n\to x^0\in\mathrm{C}(J;\mathbb{X})$. Hence, we have  $(x^0,u^0)\in\mathscr{A}_{\mathrm{ad}}$.
		
		Next, we show that the functional $\mathscr{G}(\cdot,\cdot)$ attains its minimum at  $(x^0,u^0)$, that is, \emph{$\mathscr{G}=\mathscr{G}(x^0,u^0)$}. Note that the cost functional $\mathscr{G}(\cdot,\cdot)$ given in \eqref{3.1} is continuous and convex (see Proposition III.1.6 and III.1.10,  \cite{EI}) on $\mathrm{L}^2(J;\mathbb{X}) \times \mathrm{L}^2(J;\mathbb{U})$, it follows that $\mathscr{G}(\cdot,\cdot)$ is weakly lower semi-continuous (Proposition II.4.5, \cite{EI}). That is, for a sequence 
		$$(x^n,u^n)\xrightharpoonup{w}(x^0,u^0)\ \mbox{ in }\ \mathrm{L}^2(J;\mathbb{X}) \times  \mathrm{L}^2(J;\mathbb{U}),\ \mbox{ as }\ n\to\infty,$$
		we have 
		\begin{align*}
			\mathscr{G}(x^0,u^0) \leq  \liminf \limits _{n\rightarrow \infty} \mathscr{G}(x^n,u^n).
		\end{align*}
		Hence, we obtain 
		\begin{align*} L \leq \mathscr{G}(x^0,u^0) \leq  \liminf \limits _{n\rightarrow \infty} \mathscr{G}(x^n,u^n)= \lim \limits _{n\rightarrow \infty} \mathscr{G}(x^n,u^n) =L,\end{align*}
		and thus $(x^0,u^0)$ is a minimizer of the problem \eqref{F}. Since the cost functional given in \eqref{3.1} is convex, the constraint \eqref{3.2} is linear and the class $\mathcal{U}_{\mathrm{ad}}=\mathrm{L}^2(J;\mathbb{U})$ is convex, then the  optimal control obtained above is unique.
	\end{proof}
	
	\begin{rem}
		Since $\mathbb{X}$ is a separable reflexive Banach space with strictly convex dual $\mathbb{X}^*$, then the fact 8.12 in \cite{MFb} ensures that the norm $\|\cdot\|_{\mathbb{X}}$ is Gateaux differentiable. Moreover, the Gateaux derivative $\partial_x$ of the function $\varphi(x)=\frac{1}{2}\|x\|_{\mathbb{X}}^2$ is the duality map, that is, $$\langle\partial_x\varphi(x),y\rangle=\lim_{\epsilon \to 0}\frac{\varphi(x+\epsilon y)-\varphi(x)}{\epsilon}=\frac{1}{2}\frac{\mathrm{d}}{\mathrm{d}\epsilon}\|x+\epsilon y\|_{\mathbb{X}}^2\Big|_{\epsilon=0}=\langle\mathscr{J}[x],y\rangle,$$ for $y\in\mathbb{X}$. Furthermore, as we know that  $\mathbb{U}$ is a separable Hilbert space (identified with its own dual), then the space $\mathbb{U}$ admits a Fr\'echet differentiable norm (cf. Theorem 8.24, \cite{MFb}) .	
	\end{rem}
	The expression for the optimal control in the feedback form is obtained in the following lemma:
	
	\begin{lem}\label{lem3.1}
		Assume that $(x,u)$ is the optimal pair for the problem \eqref{F}. Then the optimal control $u$ is given by
		\begin{align*}
			u(t)=\mathrm{B}^*\mathcal{S}_{\gamma}(T-t)^*\mathscr{J}\left[\mathcal{R}(\lambda,\Phi_{0}^T)\ell(x(\cdot))\right],\  t\in [0, T),\ \lambda>0,\ \frac{1}{2}<\gamma<1, 
		\end{align*}
		with
		\begin{align*}
			\ell(x(\cdot))=x_{T}-\mathcal{C}_{\gamma}(T)v-\mathcal{T}_{\gamma}(T)w.
		\end{align*}
	\end{lem}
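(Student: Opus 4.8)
The plan is to derive the feedback expression from the first-order optimality condition, exploiting that the admissible class $\mathcal{U}_{\mathrm{ad}}=\mathrm{L}^2(J;\mathbb{U})$ is a \emph{linear} space, so that arbitrary two-sided variations are admissible. Given the optimal pair $(x,u)$, I would perturb the control as $u^\varepsilon=u+\varepsilon\phi$ for an arbitrary $\phi\in\mathrm{L}^2(J;\mathbb{U})$ and $\varepsilon\in\mathbb{R}$. Since the state depends affinely on the control through the mild-solution formula, the corresponding state is $x^\varepsilon=x+\varepsilon z$, where $z(t)=\int_0^t(t-s)^{\gamma-1}\mathcal{S}_\gamma(t-s)\mathrm{B}\phi(s)\mathrm{d}s$, so that in particular $z(T)=L_0^T\phi$. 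The scalar map $\varepsilon\mapsto\mathscr{G}(x^\varepsilon,u^\varepsilon)$ then attains its minimum at $\varepsilon=0$, which forces its derivative there to vanish.

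Next I would compute this derivative. For the terminal term I use the Remark preceding the lemma: the Gateaux derivative of $\frac12\|\cdot\|_{\mathbb{X}}^2$ is the duality map $\mathscr{J}$, so the derivative of $\|x(T)+\varepsilon z(T)-x_T\|_{\mathbb{X}}^2$ at $\varepsilon=0$ equals $2\langle z(T),\mathscr{J}[x(T)-x_T]\rangle$; for the running cost the Hilbert structure of $\mathbb{U}$ gives $2\lambda\int_0^T(T-t)^{\gamma-1}\langle u(t),\phi(t)\rangle_{\mathbb{U}}\mathrm{d}t$. Writing $z(T)$ as the integral above and passing $\mathrm{B}$ and $\mathcal{S}_\gamma(T-t)$ to their adjoints, the stationarity condition becomes
\begin{align*}
\int_0^T(T-t)^{\gamma-1}\big\langle \mathrm{B}^*\mathcal{S}_\gamma(T-t)^*\mathscr{J}[x(T)-x_T]+\lambda u(t),\,\phi(t)\big\rangle_{\mathbb{U}}\mathrm{d}t=0
\end{align*}
for every $\phi$. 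Since $(T-t)^{\gamma-1}>0$ a.e. on $J$ and $\phi$ is arbitrary, the fundamental lemma of the calculus of variations yields the pointwise identity $u(t)=-\tfrac1\lambda\mathrm{B}^*\mathcal{S}_\gamma(T-t)^*\mathscr{J}[x(T)-x_T]$ for a.e. $t\in[0,T)$.

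It then remains to recast this in the stated resolvent form. I would substitute this $u$ back into the mild-solution formula evaluated at $t=T$; the control integral collapses to $-\tfrac1\lambda\Phi_0^T\mathscr{J}[x(T)-x_T]$, and, setting $\zeta:=x_T-x(T)$ and recalling $\ell(x(\cdot))=x_T-\mathcal{C}_\gamma(T)v-\mathcal{T}_\gamma(T)w$, a short rearrangement together with the oddness of $\mathscr{J}$ gives $\lambda\zeta+\Phi_0^T\mathscr{J}[\zeta]=\lambda\ell(x(\cdot))$. This is precisely equation \eqref{2.4} with $h=\ell(x(\cdot))$, so Lemma \ref{lem2.9} identifies its unique solution as $\zeta=\lambda\mathcal{R}(\lambda,\Phi_0^T)\ell(x(\cdot))$. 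Finally, using $\mathscr{J}[x(T)-x_T]=-\mathscr{J}[\zeta]$ and the positive $1$-homogeneity $\mathscr{J}[\lambda\,\cdot\,]=\lambda\mathscr{J}[\cdot]$ (for $\lambda>0$) of the duality map, the factor $\tfrac1\lambda$ is absorbed and $u(t)=\mathrm{B}^*\mathcal{S}_\gamma(T-t)^*\mathscr{J}[\mathcal{R}(\lambda,\Phi_0^T)\ell(x(\cdot))]$, as claimed.

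The main obstacle is the rigorous justification of the first-variation computation in the Banach-space terminal term: unlike the Hilbert running cost, differentiating $\|\cdot\|_{\mathbb{X}}^2$ relies on the Gateaux differentiability supplied by the Remark, and one must track the nonlinearity of $\mathscr{J}$ carefully throughout. Its oddness and positive homogeneity are exactly what make the final algebraic reduction valid, replacing the linearity one would freely use in a Hilbert space. A secondary technical point is the interchange of differentiation and integration when computing the derivative of $\mathscr{G}(x^\varepsilon,u^\varepsilon)$, which is routine given the operator bounds in Lemma \ref{lem2.11}.
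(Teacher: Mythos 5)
Your proposal is correct and follows essentially the same route as the paper's proof: a two-sided perturbation $u+\varepsilon\phi$ in the linear class $\mathcal{U}_{\mathrm{ad}}$, vanishing of the first variation via the Gateaux differentiability of $\|\cdot\|_{\mathbb{X}}^2$ (the duality map $\mathscr{J}$) and the Hilbert structure of $\mathbb{U}$, pointwise identification $u(t)=-\lambda^{-1}\mathrm{B}^*\mathcal{S}_{\gamma}(T-t)^*\mathscr{J}[x(T)-x_T]$, and back-substitution into the mild-solution formula to reach the resolvent equation \eqref{2.4}. Your treatment of the last step is in fact slightly more explicit than the paper's (which passes from \eqref{3.13} to \eqref{3.15} without comment): you correctly flag that the oddness and positive homogeneity of the nonlinear map $\mathscr{J}$, together with the uniqueness in Lemma \ref{lem2.9}, are what justify absorbing the sign and the factor $\lambda^{-1}$.
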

	\begin{proof}
		Let us consider the functional 
		\begin{align*}
			\mathscr{L}(\varepsilon)=\mathscr{G}(x_{u+\varepsilon z},u+\varepsilon z),
		\end{align*}
		where $(x,u)$ is the optimal solution of \eqref{F} and $z\in \mathrm{L}^{2}(J;\mathbb{U})$. Also the function $x_{u+\varepsilon z}$ is the unique mild solution of \eqref{3.2} with the control $u+\varepsilon z$. Then, it is immediate that 
		\begin{align*}
			x_{u+\varepsilon z}(t)= \mathcal{C}_{\gamma}(t)v+\mathcal{T}_{\gamma}(t)w+\int_{0}^{t}(t-s)^{\gamma-1}\mathcal{S}_{\gamma}(t-s)\mathrm{B}(u+\varepsilon z)(s)\mathrm{d}s.
		\end{align*}
		It is clear that the functional $\mathscr{L}(\varepsilon)$ has a critical point at $\varepsilon=0$. We now calculate the first variation of the cost functional $\mathscr{G}$ (defined in \eqref{3.1}) as
		\begin{align*}
			\frac{\mathrm{d}}{\mathrm{d}\varepsilon}\mathscr{L}(\varepsilon)\Big|_{\varepsilon=0}&=\frac{\mathrm{d}}{\mathrm{d}\varepsilon}\bigg[\left\|x_{u+\varepsilon z}(T)-x_{T}\right\|^{2}_{\mathbb{X}}+\lambda\int^{T}_{0}(T-t)^{\gamma-1}\left\|u(t)+\varepsilon z(t)\right\|^{2}_{\mathbb{U}}\mathrm{d}t\bigg]_{\varepsilon=0}\nonumber\\
			&=2\bigg[\langle \mathscr{J}(x_{u+\varepsilon z}(T)-x_{T}), \frac{\mathrm{d}}{\mathrm{d}\varepsilon}(x_{u+\varepsilon z}(T)-x_{T})\rangle\nonumber\\&\qquad +2\lambda\int^{T}_{0}(T-t)^{\gamma-1}(u(t)+\varepsilon z(t),\frac{\mathrm{d}}{\mathrm{d}\varepsilon}(u(t)+\varepsilon z(t)))\mathrm{d}t\bigg]_{\varepsilon=0}\nonumber\\
			&=2\left\langle\mathscr{J}(x(T)-x_T),\int_0^T(T-t)^{\gamma-1}\mathcal{S}_{\gamma}(T-t)\mathrm{B}z(t)\mathrm{d}t\right\rangle\\&\quad+2\lambda\int_0^T(T-t)^{\gamma-1}(u(t),z(t))\mathrm{d} t. 
		\end{align*}
		By taking the first variation zero, we deduce that
		\begin{align}\label{3.9}
			0&=\left\langle\mathscr{J}(x(T)-x_T),\int_0^T(T-t)^{\gamma-1}\mathcal{S}_{\gamma}(T-t)\mathrm{B}z(t)\mathrm{d}t\right\rangle+\lambda\int_0^T(T-t)^{\gamma-1}(u(t),z(t))\mathrm{d} t\nonumber\\&=\int_0^T(T-t)^{\gamma-1}\left\langle\mathscr{J}(x(T)-x_T),\mathcal{S}_{\gamma}(T-t)\mathrm{B}z(t) \right\rangle\mathrm{d}t +\lambda\int_0^T(T-t)^{\gamma-1}(u(t),z(t))\mathrm{d}t\nonumber\\&= \int_0^T(T-t)^{\gamma-1}\left(\mathrm{B}^*\mathcal{S}_{\gamma}(T-t)^*\mathscr{J}(x(T)-x_T)+\lambda u(t),z(t) \right)\mathrm{d}t,
		\end{align}
		where $(\cdot,\cdot)$ is the inner product in the Hilbert space $\mathbb{U}$. Note that the function $z\in \mathrm{L}^{2}(J;\mathbb{U})$ is an arbitrary (one can take $z$ to be $\mathrm{B}^*\mathcal{S}_{\gamma}(T-t)^*\mathscr{J}(x(T)-x_T)+\lambda u(t)$), it is immediate that
		\begin{align}\label{3.10}
			u(t)&= -\lambda^{-1}\mathrm{B}^*\mathcal{S}_{\gamma}(T-t)^*\mathscr{J}(x(T)-x_T),
		\end{align}
		for a.e. $t\in [0,T]$. Along with the relations \eqref{3.9} and \eqref{3.10}, it is clear that $u\in\mathrm{C}([0,T];\mathbb{U})$. Using the above expression of control, we find
		\begin{align}\label{3.11}
			x(T)&=\mathcal{C}_{\gamma}(T)v+\mathcal{T}_{\gamma}(T)w-\int^{T}_{0}\lambda^{-1}(T-s)^{\gamma-1}\mathcal{S}_{\gamma}(T-s)\mathrm{B}\mathrm{B}^*\mathcal{S}_{\gamma}(T-s)^*\mathscr{J}(x(T)-x_T)\mathrm{d}s\nonumber\\
			&=\mathcal{C}_{\gamma}(T)v+\mathcal{T}_{\gamma}(T)w-\lambda^{-1}\Phi_{0}^T\mathscr{J}\left[x(T)-x_{T}\right].
		\end{align}
		Let us assume
		\begin{align}\label{3.12}
			\ell(x(\cdot)):=x_{T}-\mathcal{C}_{\gamma}(T)v-\mathcal{T}_{\gamma}(T)w.
		\end{align}
		Further, by \eqref{3.11} and \eqref{3.12}, we have
		\begin{align}\label{3.13}
			x(T)-x_{T}&=-\ell(x(\cdot))-\lambda^{-1}\Phi_{0}^T\mathscr{J}\left[x(T)-x_{T}\right].
		\end{align}
		From \eqref{3.13}, one can easily get 
		\begin{align}\label{3.15}
			x(T)-x_T=-\lambda\mathrm{I}(\lambda\mathrm{I}+\Phi_0^T\mathscr{J})^{-1}\ell(x(\cdot))=-\lambda\mathcal{R}(\lambda,\Phi_0^T)\ell(x(\cdot)).
		\end{align}
		Finally, from \eqref{3.10}, we find the optimal control as
		\begin{align*}
			u(t)=\mathrm{B}^*\mathcal{S}_{\gamma}(T-t)^*\mathscr{J}\left[\mathcal{R}(\lambda,\Phi_{0}^T)\ell(x(\cdot))\right],\ \mbox{ for all } \ t\in [0,T],
		\end{align*}
		which completes the proof. 
	\end{proof}
	In the next lemma, we discuss the compactness of the operator $(\mathrm{Q}f)(\cdot) =\int_{0}^{\cdot}(\cdot-s)^{\gamma-1}\mathcal{S}_{\gamma}(\cdot-s) f(s)\mathrm{d}s:\mathrm{L}^2(J;\mathbb{X})\rightarrow \mathrm{C}(J;\mathbb{X}) ,\ \mbox{where}\ \gamma\in(\frac{1}{2},1)$ and $\mathbb{X}$ is a general Banach space.
	\begin{lem}\label{lem2.12}
		Suppose that the operator $\mathcal{S}_{\gamma}(t)$ is compact for $t\ge 0$. Let the operator $\mathrm{Q}:\mathrm{L}^{2}(J;\mathbb{X})\rightarrow \mathrm{C}(J;\mathbb{X})$ be defined as
		\begin{align}
			(\mathrm{Q}f)(t)= \int^{t}_{0}(t-s)^{\gamma-1}\mathcal{S}_{\gamma}(t-s)f(s)\mathrm{d}s, \ t\in J,\ \frac{1}{2}<\gamma<1.
		\end{align}
		Then the operator $\mathrm{Q}$ is compact.
	\end{lem}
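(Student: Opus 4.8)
The plan is to establish compactness of $\mathrm{Q}$ via the Arzel\`a--Ascoli theorem. Fix the bounded set $\mathcal{D}=\{f\in\mathrm{L}^2(J;\mathbb{X}):\|f\|_{\mathrm{L}^2(J;\mathbb{X})}\le\rho\}$; it suffices to show that $\{\mathrm{Q}f:f\in\mathcal{D}\}$ is relatively compact in $\mathrm{C}(J;\mathbb{X})$, which amounts to verifying three properties: (a) this family is uniformly bounded; (b) it is equicontinuous on $J$; and (c) for each fixed $t\in J$, the section $\{(\mathrm{Q}f)(t):f\in\mathcal{D}\}$ is relatively compact in $\mathbb{X}$.

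For (a), I would use the bound $\|\mathcal{S}_{\gamma}(t-s)\|_{\mathcal{L}(\mathbb{X})}\le\frac{M}{\Gamma(2\gamma)}(t-s)^{\gamma}$ from Lemma \ref{lem2.11}(i), so that the effective scalar kernel is $(t-s)^{2\gamma-1}$, which is square-integrable on $(0,t)$ since $\gamma>\frac14$; Hölder's inequality then yields $\|(\mathrm{Q}f)(t)\|_{\mathbb{X}}\le\frac{M}{\Gamma(2\gamma)}\big(\int_0^t(t-s)^{2(2\gamma-1)}\mathrm{d}s\big)^{1/2}\rho$, uniformly in $t\in J$ and $f\in\mathcal{D}$. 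For (b), for $0\le t_1<t_2\le T$ I would split $(\mathrm{Q}f)(t_2)-(\mathrm{Q}f)(t_1)$ into the near-diagonal piece $\int_{t_1}^{t_2}(t_2-s)^{\gamma-1}\mathcal{S}_{\gamma}(t_2-s)f(s)\mathrm{d}s$ and the tail $\int_0^{t_1}\big[(t_2-s)^{\gamma-1}\mathcal{S}_{\gamma}(t_2-s)-(t_1-s)^{\gamma-1}\mathcal{S}_{\gamma}(t_1-s)\big]f(s)\mathrm{d}s$. The first piece is controlled exactly as in (a) and tends to $0$ as $t_2-t_1\to0$; the second is handled using the uniform operator-norm continuity of $t\mapsto t^{\gamma-1}\mathcal{S}_{\gamma}(t)$ granted by Lemma \ref{lem2.11}(ii), combined with Hölder's inequality so that the bound is independent of $f\in\mathcal{D}$.

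The crux of the argument, and the step I expect to be the main obstacle, is (c), because the singularity of the kernel at $s=t$ blocks a direct appeal to compactness of $\mathcal{S}_{\gamma}$. I would therefore truncate: for $t>0$ and $0<\varepsilon<t$, set $(\mathrm{Q}_{\varepsilon}f)(t):=\int_0^{t-\varepsilon}(t-s)^{\gamma-1}\mathcal{S}_{\gamma}(t-s)f(s)\mathrm{d}s$ and change variables $\tau=t-s$ to rewrite it as $\int_{\varepsilon}^{t}g(\tau)f(t-\tau)\mathrm{d}\tau$ with $g(\tau)=\tau^{\gamma-1}\mathcal{S}_{\gamma}(\tau)$. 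Since $g$ is operator-norm continuous on the compact interval $[\varepsilon,t]$ (Lemma \ref{lem2.11}(ii)) and each $g(\tau)$ is compact (as $\mathcal{S}_{\gamma}(\tau)$ is compact by hypothesis), I would approximate $g$ uniformly by operator-valued step functions. This expresses $(\mathrm{Q}_{\varepsilon}f)(t)$, up to an arbitrarily small error, as a finite sum $\sum_i g(\tau_i)\int_{I_i}f(t-\tau)\mathrm{d}\tau$, where each $\int_{I_i}f(t-\tau)\mathrm{d}\tau$ lies in a fixed bounded set $B\subset\mathbb{X}$ by Hölder's inequality, so each term lies in the relatively compact set $g(\tau_i)(B)$. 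As a finite sum of relatively compact sets is relatively compact, it follows that $\{(\mathrm{Q}_{\varepsilon}f)(t):f\in\mathcal{D}\}$ is totally bounded for each fixed $\varepsilon$.

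To remove the truncation I would estimate the tail $\|(\mathrm{Q}f)(t)-(\mathrm{Q}_{\varepsilon}f)(t)\|_{\mathbb{X}}\le\frac{M}{\Gamma(2\gamma)}\int_0^{\varepsilon}\tau^{2\gamma-1}\|f(t-\tau)\|_{\mathbb{X}}\mathrm{d}\tau\le\frac{M}{\Gamma(2\gamma)}\big(\int_0^{\varepsilon}\tau^{2(2\gamma-1)}\mathrm{d}\tau\big)^{1/2}\rho$, which tends to $0$ as $\varepsilon\downarrow0$ uniformly in $f\in\mathcal{D}$. Hence $\{(\mathrm{Q}f)(t):f\in\mathcal{D}\}$ is approximated to within arbitrary precision by totally bounded sets, so it is itself totally bounded and, by completeness of $\mathbb{X}$, relatively compact; the case $t=0$ is immediate since $(\mathrm{Q}f)(0)=0$. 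With (a)--(c) established, the Arzel\`a--Ascoli theorem shows that $\{\mathrm{Q}f:f\in\mathcal{D}\}$ is relatively compact in $\mathrm{C}(J;\mathbb{X})$, which proves that $\mathrm{Q}$ is compact.
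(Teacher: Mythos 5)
Your argument is correct, but it is worth pointing out that the paper does not actually prove this lemma at all: it disposes of it in one line by declaring the proof ``a straightforward adaptation of the proof of Lemma 3.2, \cite{SMJ}'' (the authors' companion paper). Your self-contained Arzel\`a--Ascoli argument is therefore a genuine filling-in rather than a reproduction, and it is assembled from exactly the ingredients the paper records in Lemma \ref{lem2.11}: the bound $\left\|\mathcal{S}_{\gamma}(t)\right\|_{\mathcal{L}(\mathbb{X})}\le \frac{M}{\Gamma(2\gamma)}t^{\gamma}$, the uniform operator-norm continuity of $t\mapsto t^{\gamma-1}\mathcal{S}_{\gamma}(t)$, and the compactness of $\mathcal{S}_{\gamma}(t)$, so it sits squarely inside the paper's framework. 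One simplification is available to you: since $\gamma>\frac{1}{2}$, the operator-valued kernel $g(\tau)=\tau^{\gamma-1}\mathcal{S}_{\gamma}(\tau)$ satisfies $\left\|g(\tau)\right\|_{\mathcal{L}(\mathbb{X})}\le\frac{M}{\Gamma(2\gamma)}\tau^{2\gamma-1}\to 0$ as $\tau\downarrow 0$, so $g$ extends to a norm-continuous, compact-operator-valued function on all of $[0,t]$ with $g(0)=0$. Consequently the ``singularity at $s=t$'' that you flag as the main obstacle lives only in the scalar factor $(t-s)^{\gamma-1}$, not in the operator kernel, and your $\varepsilon$-truncation in step (c) is unnecessary: the step-function approximation can be run directly on $[0,t]$. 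The truncation does no harm (and would be genuinely needed if one wanted the statement for $\gamma\le\frac{1}{2}$, where the kernel really is unbounded in norm), but here it lengthens the proof without buying anything.
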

	A proof of the above lemma is a straightforward adaptation of the proof of Lemma 3.2, \cite{SMJ}. 
	\begin{lem}\label{lem3.4}
		The linear control system \eqref{3.2} is approximately controllable on $J$ if and only if Assumption (H0) holds. 
	\end{lem}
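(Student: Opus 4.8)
The plan is to establish both implications using the feedback control from Lemma~\ref{lem3.1} and the solvability statement of Lemma~\ref{lem2.9}. For the linear system \eqref{3.2}, the quantity $\ell(x(\cdot))=x_T-\mathcal{C}_\gamma(T)v-\mathcal{T}_\gamma(T)w$ appearing there is a \emph{fixed} element $h\in\mathbb{X}$, independent of the state, so inserting the feedback control $u_\lambda(t)=\mathrm{B}^*\mathcal{S}_\gamma(T-t)^*\mathscr{J}[\mathcal{R}(\lambda,\Phi_0^T)h]$ into the mild formula and invoking \eqref{3.15} yields $x(T)-x_T=-\lambda\mathcal{R}(\lambda,\Phi_0^T)h=-z_\lambda(h)$, whence $\|x(T)-x_T\|_{\mathbb{X}}=\|z_\lambda(h)\|_{\mathbb{X}}$. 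For the sufficiency, assuming (H0), I would simply take this control with $h=x_T-\mathcal{C}_\gamma(T)v-\mathcal{T}_\gamma(T)w$; since $z_\lambda(h)\to 0$ strongly as $\lambda\downarrow 0$, the terminal miss $\|x(T)-x_T\|_{\mathbb{X}}$ is smaller than any prescribed $\epsilon>0$ for $\lambda$ small, which is exactly approximate controllability.

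For the necessity I would first recast approximate controllability in observability form: denseness of the range of $L_0^T$ in $\mathbb{X}$ is equivalent to injectivity of $(L_0^T)^*$, and since $((L_0^T)^*\phi)(t)=(T-t)^{\gamma-1}\mathrm{B}^*\mathcal{S}_\gamma(T-t)^*\phi$, this means that $\mathrm{B}^*\mathcal{S}_\gamma(T-t)^*\phi=0$ for a.e.\ $t\in[0,T]$ forces $\phi=0$. Next, for fixed $h$ let $z_\lambda=z_\lambda(h)$ solve \eqref{2.4}; by Lemma~\ref{lem2.9}, $\|z_\lambda\|_{\mathbb{X}}\le\|h\|_{\mathbb{X}}$. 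Pairing \eqref{2.4} with $\mathscr{J}[z_\lambda]$, using $\langle z_\lambda,\mathscr{J}[z_\lambda]\rangle=\|z_\lambda\|_{\mathbb{X}}^2$ and $\langle\Phi_0^T\phi,\phi\rangle=\int_0^T(T-t)^{\gamma-1}\|\mathrm{B}^*\mathcal{S}_\gamma(T-t)^*\phi\|_{\mathbb{U}}^2\mathrm{d}t\ge0$, and dividing by $\lambda$ gives
$$\|z_\lambda\|_{\mathbb{X}}^2+\frac{1}{\lambda}\int_0^T(T-t)^{\gamma-1}\|\mathrm{B}^*\mathcal{S}_\gamma(T-t)^*\mathscr{J}[z_\lambda]\|_{\mathbb{U}}^2\mathrm{d}t=\langle h,\mathscr{J}[z_\lambda]\rangle\le\|h\|_{\mathbb{X}}^2.$$
In particular the weighted integral is of order $\lambda$ and tends to $0$ as $\lambda\downarrow 0$.

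To conclude (H0), along any sequence $\lambda_n\downarrow 0$ I would extract, by reflexivity of $\mathbb{X}$ (hence of $\mathbb{X}^*$), a subsequence with $\mathscr{J}[z_{\lambda_n}]\rightharpoonup\xi$ in $\mathbb{X}^*$. The vanishing of the weighted integral gives $\mathrm{B}^*\mathcal{S}_\gamma(T-\cdot)^*\mathscr{J}[z_{\lambda_n}]\to 0$ in the weighted $\mathrm{L}^2(J;\mathbb{U})$; passing to the weak limit for a.e.\ fixed $t$ forces $\mathrm{B}^*\mathcal{S}_\gamma(T-t)^*\xi=0$ a.e., so the observability characterization yields $\xi=0$, i.e.\ $\mathscr{J}[z_{\lambda_n}]\rightharpoonup 0$ (weak and weak-$*$ coincide by reflexivity). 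The displayed identity then gives $\|z_{\lambda_n}\|_{\mathbb{X}}^2\le\langle h,\mathscr{J}[z_{\lambda_n}]\rangle\to 0$, so $z_{\lambda_n}\to 0$ strongly; since the limit is the same along every subsequence, the whole family obeys $z_\lambda\to 0$, establishing (H0).

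The principal obstacle is this necessity direction, and within it the upgrade from the weak convergence $\mathscr{J}[z_\lambda]\rightharpoonup 0$ to the strong convergence $z_\lambda\to 0$. This step rests on three ingredients that must be handled carefully: the nonnegativity of $\Phi_0^T$ (used to discard the nonnegative integral term), the identity $\langle z_\lambda,\mathscr{J}[z_\lambda]\rangle=\|z_\lambda\|_{\mathbb{X}}^2$ for the duality map, and the coincidence of the weak and weak-$*$ topologies on $\mathbb{X}^*$ guaranteed by reflexivity; some attention is also needed because $\mathscr{J}$ is nonlinear and only demicontinuous, so the passage to weak limits inside $\mathrm{B}^*\mathcal{S}_\gamma(T-t)^*$ must be justified pointwise in $t$.
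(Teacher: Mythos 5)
Your proof is correct and takes essentially the same route as the paper, which gives no details for Lemma \ref{lem3.4} but defers to Theorem 3.2 of \cite{SM} — precisely this Mahmudov-type argument: for sufficiency, the feedback control of Lemma \ref{lem3.1} together with the identity \eqref{3.15} gives $\|x(T)-x_T\|_{\mathbb{X}}=\|z_\lambda(h)\|_{\mathbb{X}}\to 0$ under (H0); for necessity, the positivity/observability characterization of $\Phi_0^T$ (cf. Remark \ref{rem3.4}) combined with the pairing of \eqref{2.4} against $\mathscr{J}[z_\lambda]$ and a weak-convergence extraction yields $z_\lambda\to 0$ strongly. Your treatment of the delicate points — positive homogeneity of $\mathscr{J}$, the a.e.\ pointwise identification of the weak limit through the fixed bounded operators $\mathrm{B}^*\mathcal{S}_{\gamma}(T-t)^*$ (where the nonlinearity of $\mathscr{J}$ is indeed irrelevant), and the subsequence principle to upgrade from subsequences to the full family — is sound, so nothing essential is missing.
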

	A proof of the above lemma can be obtained by proceeding similarly as in the proof of  Theorem 3.2, \cite{SM}.
	\begin{rem}\label{rem3.4}
		Note that the operator $\Phi_{0}^{T}$ is positive if and only if  Assumption (\textit{H0}) holds (see, Theorem 2.3, \cite{M}). The positivity of $\Phi_{0}^{T}$ is equivalent to $$ \langle x^*, \Phi_{0}^{T}x^*\rangle=0\Rightarrow x^*=0.$$ Using the definition of $\Phi_{0}^{T}$, we have 
		\begin{align}
			\langle x^*, \Phi_{0}^{T}x^*\rangle =\int_0^T(T-t)^{\gamma-1}\left\|\mathrm{B}^*\mathcal{S}_{\gamma}(T-t)^*x^*\right\|_{\mathbb{X}^*}^2\mathrm{d}t.
		\end{align}
		The above fact and Lemma \ref{lem3.4} guarantee that the approximate controllability of the linear system \eqref{3.2} is equivalent to the condition $$\mathrm{B}^*\mathcal{S}_{\gamma}(T-t)^*x^*=0,\ 0\le t<T \Rightarrow x^*=0.$$
	\end{rem}
	\section{Approximate Controllability of the Fractional order Impulsive System} \label{semilinear}\setcounter{equation}{0}
	The present section is reserved for the approximate controllability of the semilinear impulsive fractional control system \eqref{1.1}. For this, we first verify the existence of a mild solution of the  system \eqref{1.1} with the control defined as
	\begin{align}\label{C}
		u^{\gamma}_{\lambda}(t)&=\sum_{j=0}^{p}u^{\gamma}_{j,\lambda}(t)\chi_{[s_j, \tau_{j+1}]}(t), \ t\in J,\  \frac{1}{2}<\gamma<1,
	\end{align}
	where 
	\begin{align*}
		u^{\gamma}_{j,\lambda}(t)&=\mathrm{B}^*\mathcal{S}_{\gamma}(\tau_{j+1}-t)^*\mathscr{J}\left[\mathcal{R}(\lambda,\Phi_{s_j}^{\tau_{j+1}})g_j(x(\cdot))\right],
	\end{align*}
	for $t\in [s_j, \tau_{j+1}],j=0,1,\ldots,p$, with
	\begin{align*}
		g_0(x(\cdot))&=\xi_{0}-\mathcal{C}_{\gamma}(\tau_1)\psi(0)-\mathcal{T}_{\gamma}(\tau_1)\eta-\int^{\tau_1}_{0}(\tau_1-s)^{\gamma-1}\mathcal{S}_{\gamma}(\tau_1-s)f(s,\tilde{x}_{\varrho(s,\tilde{x}_s)})\mathrm{d}s,\nonumber\\
		g_j(x(\cdot))&=\xi_{j}-\mathcal{C}_{\gamma}(\tau_{j+1}-s_j)h_j(s_j,\tilde{x}(\tau_j^-))-\mathcal{T}_{\gamma}(\tau_{j+1}-s_j)h'_j(s_j,\tilde{x}(\tau_j^-))\\&\quad+\int_{0}^{s_j}(s_j-s)^{\gamma-1}\mathcal{S}_{\gamma}(s_j-s)\left[f(s,\tilde{x}_{\varrho(s,\tilde{x}_s)})+\mathrm{B}\sum_{k=0}^{j-1}u^{\gamma}_{k,\lambda}(s)\chi_{[s_k, \tau_{k+1})}(s)\right]\mathrm{d}s \\&\quad-\int_{0}^{\tau_{j+1}}(\tau_{j+1}-s)^{\gamma-1}\mathcal{S}_{\gamma}(\tau_{j+1}-s)f(s,\tilde{x}_{\varrho(s,\tilde{x}_s)})\mathrm{d}s	\\&\quad-\int_{0}^{s_j}(\tau_{j+1}-s)^{\gamma-1}\mathcal{S}_{\gamma}(\tau_{j+1}-s)\mathrm{B}\sum_{k=0}^{j-1}u^{\gamma}_{k,\lambda}(s)\chi_{[s_k, \tau_{k+1})}(s)\mathrm{d}s, \ j=1,\ldots,p,
	\end{align*} 
	  the function $\tilde{x}:(-\infty,T]\rightarrow\mathbb{X}$ is given by $$\tilde{x}(t)=\psi(t), \ t\in(-\infty,0], \ \tilde{x}(t)=x(t),\ t\in J,$$ and for arbitrary $\xi_{j}\in \mathbb{X}$ for $j=0,1,\ldots,p$. 
	\begin{rem}
		Since the operator $\Phi_{s_j}^{\tau_{j+1}},$  for each $j=0,\ldots,p,$ is linear, bounded and non-negative,   Lemma \ref{lem2.9} is valid for each $\Phi_{s_j}^{\tau_{j+1}},$  for $j=0,\ldots,p$.
	\end{rem}
	In the following theorem, we obtain the existence of a mild solution of the system \eqref{1.1} with the control given in \eqref{C}.
	\begin{theorem}\label{thm4.3}
		If Assumptions (R1)-(R3) and (H1)-(H2) hold true. Then for fixed $\xi_{j}\in\mathbb{X},$ for $j=0,1,\ldots,p$ and every $\lambda>0$, the system \eqref{1.1} with the control \eqref{C} has at least one mild solution on $J$, provided 
		\begin{align}\label{cnd}
			\frac{2MT^{2\gamma-\delta}K_{2}\zeta}{\Gamma(2\gamma)c^{1-\delta}}\left\{1+\frac{(p+1)(p+2)R}{2}+\frac{p(p+1)R^2}{2}\sum_{k=0}^{p-1}e^{\frac{(p+k)(p-k-1)R}{2}}\right\}<1,
		\end{align}
		where $R=\frac{2T^{3\gamma}}{3\gamma\lambda}\left(\frac{M\tilde{M}}{\Gamma(2\gamma)}\right)^2$ and $c=\frac{2\gamma-\delta}{1-\delta}$.
	\end{theorem}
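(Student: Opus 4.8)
The plan is to recast \eqref{1.1} as a fixed point problem and apply Schauder's fixed point theorem. First I would define a solution operator $\mathcal{F}:\mathrm{PC}(J;\mathbb{X})\to\mathrm{PC}(J;\mathbb{X})$ by letting $(\mathcal{F}x)(t)$ equal the right-hand side of the mild-solution formula \eqref{2.2} on each of the subintervals $[0,\tau_1]$, $(\tau_j,s_j]$ and $(s_j,\tau_{j+1}]$, with the feedback control \eqref{C} inserted and with the delayed argument interpreted through the extension $\tilde{x}$ (equal to $\psi$ on $(-\infty,0]$ and to $x$ on $J$). A fixed point of $\mathcal{F}$ is then precisely a mild solution of \eqref{1.1}, so it suffices to verify the three hypotheses of Schauder's theorem: that $\mathcal{F}$ carries a suitable closed ball into itself, that $\mathcal{F}$ is continuous, and that $\mathcal{F}$ has relatively compact range.

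For the self-map property I would fix $r>0$, take $x\in B_r:=\{x\in\mathrm{PC}(J;\mathbb{X}):\|x\|_{\mathrm{PC}(J;\mathbb{X})}\le r\}$, and estimate $\|(\mathcal{F}x)(t)\|_{\mathbb{X}}$ on each subinterval. Here Lemma \ref{lem2.11}(i) bounds the operators $\mathcal{C}_{\gamma},\mathcal{T}_{\gamma},\mathcal{S}_{\gamma}$; Lemma \ref{lema2.1} controls the delayed state by $\|x_{\varrho(s,x_s)}\|_{\mathfrak{B}}\le K_1\|\psi\|_{\mathfrak{B}}+K_2 r=:r^{*}$, so that (H1)(ii) gives $\|f(s,x_{\varrho(s,x_s)})\|_{\mathbb{X}}\le\phi_{r^{*}}(s)$; applying H\"older's inequality to $(t-s)^{\gamma-1}\phi_{r^{*}}(s)$ with $\phi_{r^{*}}\in\mathrm{L}^{1/\delta}(J;\mathbb{R}^{+})$ produces the factor $T^{2\gamma-\delta}/c^{1-\delta}$ with $c=(2\gamma-\delta)/(1-\delta)$; and (H2)(iii) bounds the impulse terms by the constants $\kappa_j,\vartheta_j$. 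The control is handled through estimate \eqref{2.5} of Lemma \ref{lem2.9}, which yields $\|\mathcal{R}(\lambda,\Phi_{s_j}^{\tau_{j+1}})g_j(x(\cdot))\|_{\mathbb{X}}\le\lambda^{-1}\|g_j(x(\cdot))\|_{\mathbb{X}}$, and hence a bound of the form $\int_{s_j}^{\tau_{j+1}}\|u^{\gamma}_{j,\lambda}(s)\|_{\mathbb{U}}^2\,\mathrm{d}s\le R\,\|g_j(x(\cdot))\|_{\mathbb{X}}^2$, with $R=\frac{2T^{3\gamma}}{3\gamma\lambda}\bigl(\frac{M\tilde{M}}{\Gamma(2\gamma)}\bigr)^2$ the coupling constant.

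The delicate point, which I expect to be the main obstacle, is that each $g_j(x(\cdot))$ contains the controls $u^{\gamma}_{k,\lambda}$ of all earlier intervals $k<j$, so the estimates across the $p+1$ intervals are coupled recursively. I would organize the partial control norms into an inequality of the shape $f_n\le g_n+\sum_{0\le k<n}w_k f_k$ and invoke the discrete Gronwall--Bellman inequality, Lemma \ref{lem2.13}; this is precisely what generates the double sum and the exponential factor $\sum_{k=0}^{p-1}e^{(p+k)(p-k-1)R/2}$ appearing inside the braces of \eqref{cnd}. Collecting all contributions gives an estimate $\|\mathcal{F}x\|_{\mathrm{PC}(J;\mathbb{X})}\le a_r+b\,\|\phi_{r^{*}}\|_{\mathrm{L}^{1/\delta}}$, where $b$ equals the left-hand side of \eqref{cnd} divided by $\zeta$. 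Dividing by $r$, letting $r\to\infty$ and using $\liminf_{r\to\infty}\|\phi_r\|_{\mathrm{L}^{1/\delta}}/r=\zeta$ together with the growth of $r^{*}$ in $r$, the hypothesis \eqref{cnd} being strictly less than $1$ guarantees (by the standard contradiction argument) the existence of some $r$ with $\mathcal{F}(B_r)\subseteq B_r$.

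Finally I would establish continuity and compactness. Continuity of $\mathcal{F}$ follows from the Carath\'eodory continuity of $f$ in (H1)(i), the continuity of the impulses $h_j,h'_j$ in (H2), the continuity of $\varrho$ (so that $s\mapsto\tilde{x}_{\varrho(s,\tilde{x}_s)}$ is continuous), and the dominated convergence theorem with dominating function built from $\phi_{r^{*}}$. For relative compactness I would verify the hypotheses of the Arzel\`a--Ascoli theorem, adapted to the piecewise-continuous space $\mathrm{PC}(J;\mathbb{X})$, on each subinterval: equicontinuity comes from the uniform continuity of $\mathcal{C}_{\gamma},\mathcal{T}_{\gamma}$ and of $t^{\gamma-1}\mathcal{S}_{\gamma}(t)$ in Lemma \ref{lem2.11}(ii) together with the H\"older estimate for the integral terms, while pointwise relative compactness of the image follows from the compactness of $\mathcal{S}_{\gamma}(t)$ (Lemma \ref{lem2.11}(iii), guaranteed by (R3) via Lemma \ref{lem2.2}), the compactness of the convolution operator $\mathrm{Q}$ in Lemma \ref{lem2.12}, and the complete continuity of the impulses in (H2)(ii). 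Having shown that $\mathcal{F}$ is a continuous, compact self-map of the closed bounded convex set $B_r$, Schauder's fixed point theorem produces a fixed point, which is the desired mild solution of \eqref{1.1}.
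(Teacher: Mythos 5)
Your overall skeleton coincides with the paper's proof: the same solution operator $\mathscr{F}_{\lambda}$ with the feedback control \eqref{C}, Step~1 (self-map of a ball $\mathcal{E}_r$ by contradiction, with the coupled control terms organized through the discrete Gronwall--Bellman Lemma~\ref{lem2.13}, H\"older's inequality producing the factor $T^{2\gamma-\delta}/c^{1-\delta}$, and the limit $r\to\infty$ contradicting \eqref{cnd}), Step~2 (compactness via the Ascoli--Arzel\`a theorem, using uniform continuity of $\mathcal{C}_{\gamma},\mathcal{T}_{\gamma},\mathcal{S}_{\gamma}$ for equicontinuity and compactness of $\mathcal{S}_{\gamma}(t)$ together with Lemma~\ref{lem2.12} and (H2)(ii) for pointwise relative compactness), and finally Schauder's theorem. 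Up to this point your plan is the paper's argument.

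However, your continuity step has a genuine gap. You justify continuity of $\mathscr{F}_{\lambda}$ only through the continuity of $f$, of the impulses, of $\varrho$, and the dominated convergence theorem --- that is, you treat the control as if it were a fixed function. But the control \eqref{C} depends on $x$ itself: $u^{\gamma}_{j,\lambda}$ involves $\mathscr{J}\left[\mathcal{R}(\lambda,\Phi_{s_j}^{\tau_{j+1}})g_j(x(\cdot))\right]$, and in a separable reflexive Banach space with merely strictly convex dual, the duality mapping $\mathscr{J}$ is only \emph{demicontinuous}: $x^n\to x$ in norm yields only $\mathscr{J}[x^n]\xrightharpoonup{w}\mathscr{J}[x]$ weakly in $\mathbb{X}^*$. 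So even after showing $g_j(x^n(\cdot))\to g_j(x(\cdot))$ in norm (which your DCT argument does give), you cannot conclude norm convergence of the controls, and hence of $(\mathscr{F}_{\lambda}x^n)(t)$, without an additional ingredient. The paper closes exactly this hole (Step~3 of its proof): it uses the demicontinuity of $\mathscr{J}$ to get the weak convergence \eqref{4.2}, and then the \emph{compactness of the adjoint operator} $\mathcal{S}_{\gamma}(t)^*$ (inherited from the compactness of $\mathcal{S}_{\gamma}(t)$, i.e.\ ultimately from (R3)) to upgrade that weak convergence to uniform norm convergence $u^{n,\gamma}_{j,\lambda}\to u^{\gamma}_{j,\lambda}$, interval by interval, and this must be done recursively since $g_j$ contains the controls of the earlier intervals. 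Without this demicontinuity-plus-compactness argument, the continuity of $\mathscr{F}_{\lambda}$ --- and hence the applicability of Schauder's theorem --- is not established; you should add this chain to your continuity step.
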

	\begin{proof}
		Let us take a set $\mathcal{Z}:=\{x\in\mathrm{PC}(J;\mathbb{X}) : x(0)=\psi(0)\}$ with the norm $\left\|\cdot\right\|_{\mathrm{PC}(J;\mathbb{X})}$. Next, we consider a set $\mathcal{E}_{r}=\{x\in\mathcal{Z} : \left\|x\right\|_{\mathrm{PC}(J;\mathbb{X})}\le r\}$, for each $r>0$.
		
		For $\lambda>0$, we define an operator $\mathscr{F}_{\lambda}:\mathcal{Z}\to\mathcal{Z}$ such that
		\begin{equation}\label{2}
			(\mathscr{F}_{\lambda}x)(t)=\left\{
			\begin{aligned}
				&\mathcal{C}_{\gamma}(t)\psi(0)+\mathcal{T}_{\gamma}(t)\eta+\int_{0}^{t}(t-s)^{\gamma-1}\mathcal{S}_{\gamma}(t-s)\left[\mathrm{B}u^{\gamma}_{\lambda}(s)+f(s,\tilde{x}_{\varrho(s, \tilde{x}_s)})\right]\mathrm{d}s,\\ &\qquad\qquad\qquad\qquad\qquad\qquad\qquad\qquad\qquad t\in[0, \tau_1],\\
				&h_j(t, \tilde{x}(\tau_j^-)),\qquad\qquad\qquad\qquad\qquad\qquad\ \  t\in(\tau_j, s_j], j=1,\ldots,p,\\
				&\mathcal{C}_{\gamma}(t-s_j)h_j(s_j, \tilde{x}(\tau_j^-))+\mathcal{T}_{\gamma}(t-s_j)h'_j(s_j, \tilde{x}(\tau_j^-))\\& \quad -\int_{0}^{s_j}(s_j-s)^{\gamma-1}\mathcal{S}_{\gamma}(s_j-s)\left[\mathrm{B}u^{\gamma}_{\lambda}(s)+f(s,\tilde{x}_{\varrho(s, \tilde{x}_s)})\right]\mathrm{d}s \\&\quad+\int_{0}^{t}(t-s)^{\gamma-1}\mathcal{S}_{\gamma}(t-s)\left[\mathrm{B}u^{\gamma}_{\lambda}(s)+f(s,\tilde{x}_{\varrho(s, \tilde{x}_s)})\right]\mathrm{d}s,\\ &\qquad\qquad\qquad\qquad\qquad\qquad\qquad\qquad\qquad t\in(s_j,  \tau_{j+1}],\ j=1,\ldots,p.
			\end{aligned}
			\right.
		\end{equation}
		where $u^{\gamma}_{\lambda}(\cdot)$ is given in \eqref{C}. It is clear form the definition of $ \mathscr{F}_{\lambda}$ that the system $\eqref{1.1}$ has a mild solution, if the operator $ \mathscr{F}_{\lambda}$ has a fixed point. The proof that the operator $\mathscr{F}_{\lambda}$ has a fixed point is divided into the following steps. 
		\vskip 0.1in 
		\noindent\textbf{Step (1): } \emph{$ F_{\lambda}(\mathcal{E}_r)\subset \mathcal{E}_r,$ for some $ r $}. We achieve this goal by contradiction. To prove this, we assume that  for any $\lambda>0$ and every $r > 0$, there exists $x^r(\cdot) \in \mathcal{E}_r$ such that $\left\|(\mathscr{F}_\lambda x^r)(t)\right\|_{\mathbb{X}} > r$, for some $t\in J$ ($t$ may depend upon $r$). First,  we compute
		\begin{align}\label{4.4}
			\left\|g_0(x(\cdot))\right\|_{\mathbb{X}}&\le\left\|\xi_0\right\|_{\mathbb{X}}+\left\|\mathcal{C}_{\gamma}(\tau_1)\psi(0)\right\|_{\mathbb{X}}+\left\|\mathcal{T}_{\gamma}(\tau_1)\eta\right\|_{\mathbb{X}}\nonumber\\&\quad+\int_{0}^{\tau_1}(\tau_1-s)^{\gamma-1}\left\|\mathcal{S}_{\gamma}(\tau_1-s)f(s,\tilde{x}_{\varrho(s,\tilde{x}_s)})\right\|_{\mathbb{X}}\mathrm{d}s\nonumber\\&\le \left\|\xi_0\right\|_{\mathbb{X}}+M\left\|\psi(0)\right\|_{\mathbb{X}}+Mt\left\|\eta\right\|_{\mathbb{X}}+\frac{M}{\Gamma(2\gamma)}\int_{0}^{\tau_1}(\tau_1-s)^{2\gamma-1}\phi_ {r'}(s)\mathrm{d}s\nonumber\\&\le\left\|\xi_0\right\|_{\mathbb{X}}+M\left\|\psi(0)\right\|_{\mathbb{X}}+MT\left\|\eta\right\|_{\mathbb{X}}\nonumber\\&\quad+\frac{M}{\Gamma(2\gamma)}\left(\int_{0}^{\tau}(\tau_1-s)^{\frac{2\gamma-1}{1-\delta}}\mathrm{d}s\right)^{1-\delta}\left(\int_{0}^{\tau_1}(\phi_{r'}(s))^{\frac{1}{\delta}}\mathrm{d}s\right)^{\delta}\nonumber\\&\le\left\|\xi_0\right\|_{\mathbb{X}}+M\left\|\psi(0)\right\|_{\mathbb{X}}+MT\left\|\eta\right\|_{\mathbb{X}}+\frac{M\tau_{1}^{2\gamma-\delta}}{\Gamma(2\gamma)c^{1-\delta}}\left\|\phi_{r'}\right\|_{\mathrm{L}^{\frac{1}{\delta}}([0,\tau_1];\mathbb{R}^+)}\nonumber\\&\le\left\|\xi_0\right\|_{\mathbb{X}}+M\left\|\psi(0)\right\|_{\mathbb{X}}+MT\left\|\eta\right\|_{\mathbb{X}}+\frac{2MT^{2\gamma-\delta}}{\Gamma(2\gamma)c^{1-\delta}}\left\|\phi_{r'}\right\|_{\mathrm{L}^{\frac{1}{\delta}}(J;\mathbb{R}^+)}=:N_0,
		\end{align}  
		where $c=\frac{2\gamma-\delta}{1-\delta}$ and $r'=K_{1}\left\|\psi\right\|_{\mathfrak{B}}+K_{2}r$. Next, we estimate
		\begin{align*}
			\left\|g_j(x(\cdot))\right\|_{\mathbb{X}}&\le\left\|\xi_j\right\|_{\mathbb{X}}+\left\|\mathcal{C}_{\gamma}(t-s_j)h_j(s_j,\tilde{x}(\tau_j^-))\right\|_{\mathbb{X}}+\left\|\mathcal{T}_{\gamma}(t-s_j)h'_j(s_j,\tilde{x}(t_j^-))\right\|_{\mathbb{X}}\nonumber\\&\quad+\int_{0}^{s_j}(s_j-s)^{\gamma-1}\left\|\mathcal{S}_{\gamma}(s_j-s)f(s,\tilde{x}_{\varrho(s,\tilde{x}_s)})\right\|_{\mathbb{X}}\mathrm{d}s\nonumber\\&\quad+\int_{0}^{\tau_{j+1}}(\tau_{j+1}-s)^{\gamma-1}\left\|\mathcal{S}_{\gamma}(\tau_{j+1}-s)f(s,\tilde{x}_{\varrho(s,\tilde{x}_s)})\right\|_{\mathbb{X}}\mathrm{d}s\nonumber\\&\quad+\int_{0}^{s_j}(s_j-s)^{\gamma-1}\left\|\mathcal{S}_{\gamma}(s_j-s)\mathrm{B}\sum_{k=0}^{j-1}u^\gamma_{k,\lambda}(s)\chi_{[s_k,\tau_{k+1}]}(s)\right\|_{\mathbb{X}}\mathrm{d}s\nonumber\\&\quad+\int_{0}^{s_j}(\tau_{j+1}-s)^{\gamma-1}\left\|\mathcal{S}_{\gamma}(\tau_{j+1}-s)\mathrm{B}\sum_{k=0}^{j-1}u^\gamma_{k,\lambda}(s)\chi_{[s_k,\tau_{k+1}]}(s)\right\|_{\mathbb{X}}\mathrm{d}s\nonumber\\&\le\left\|\xi_j\right\|_{\mathbb{X}}+M\kappa_{j}+M(t-s_j)\vartheta_{j}+\frac{M}{\Gamma(2\gamma)}\int_{0}^{s_j}(s_j-s)^{2\gamma-1}\phi_{r'}(s)\mathrm{d}s\nonumber\\&\quad+\frac{M}{\Gamma(2\gamma)}\int_{0}^{\tau_{j+1}}(\tau_{j+1}-s)^{2\gamma-1}\phi_{r'}(s)\mathrm{d}s+\frac{1}{\lambda}\left(\frac{M\tilde{M}}{\Gamma(2\gamma)}\right)^2\sum_{k=0}^{j-1}\left\|g_k(x(\cdot))\right\|_{\mathbb{X}}\nonumber\\&\qquad\times\int_{s_k}^{\tau_{k+1}}\left[(s_j-s)^{2\gamma-1}+(\tau_{j+1}-s)^{2\gamma-1}\right](\tau_{k+1}-s)^{\gamma}\mathrm{d}s\nonumber\\&\le\left\|\xi_j\right\|_{\mathbb{X}}+M\kappa_{j}+MT\vartheta_{j}+\frac{2M}{\Gamma(2\gamma)}\int_{0}^{\tau_{j+1}}(\tau_{j+1}-s)^{2\gamma-1}\phi_{r'}(s)\mathrm{d}s\nonumber\\&\quad+\frac{2}{\lambda}\left(\frac{M\tilde{M}}{\Gamma(2\gamma)}\right)^2\sum_{k=0}^{j-1}\left\|g_k(x(\cdot))\right\|_{\mathbb{X}}\int_{s_k}^{\tau_{k+1}}(\tau_{j+1}-s)^{2\gamma-1}(\tau_{k+1}-s)^{\gamma}\mathrm{d}s\nonumber\\&\le\left\|\xi_j\right\|_{\mathbb{X}}+M\kappa_{j}+MT\vartheta_{j}+\frac{2M}{\Gamma(2\gamma)}\int_{0}^{\tau_{j+1}}(\tau_{j+1}-s)^{2\gamma-1}\phi_{r'}(s)\mathrm{d}s\nonumber\\&\quad+\frac{2}{\lambda}\left(\frac{M\tilde{M}}{\Gamma(2\gamma)}\right)^2\sum_{k=0}^{j-1}\left\|g_k(x(\cdot))\right\|_{\mathbb{X}}\int_{s_k}^{\tau_{k+1}}(\tau_{j+1}-s)^{3\gamma-1}\mathrm{d}s\nonumber\\&\le\left\|\xi_j\right\|_{\mathbb{X}}+M\kappa_{j}+MT\vartheta_{j}+\frac{2M\tau_{j+1}^{2\gamma-\delta}}{\Gamma(2\gamma)c^{1-\delta}}\left\|\phi_{r'}\right\|_{\mathrm{L}([0,\tau_{j+1}];\mathbb{R}^+)}\nonumber\\&\quad+\frac{2}{\lambda}\left(\frac{M\tilde{M}}{\Gamma(2\gamma)}\right)^2\sum_{k=0}^{j-1}\left\|g_k(x(\cdot))\right\|_{\mathbb{X}}\frac{(\tau_{j+1}-s_k)^{3\gamma}-(\tau_{j+1}-\tau_{k+1})^{3\gamma}}{3\gamma}\nonumber\\&\le\left\|\xi_j\right\|_{\mathbb{X}}+M\kappa_{j}+MT\vartheta_{j}+\frac{2MT^{2\gamma-\delta}}{\Gamma(2\gamma)c^{1-\delta}}\left\|\phi_{r'}\right\|_{\mathrm{L}(J;\mathbb{R}^+)}\nonumber\\&\quad+\frac{2T^{3\gamma}}{3\gamma\lambda}\left(\frac{M\tilde{M}}{\Gamma(2\gamma)}\right)^2\sum_{k=0}^{j-1}\left\|g_k(x(\cdot))\right\|_{\mathbb{X}}\nonumber\\&\le N_j+R\sum_{k=0}^{j-1}\left\|g_k(x(\cdot))\right\|_{\mathbb{X}},
		\end{align*}
		where  $R=\frac{2T^{3\gamma}}{3\gamma\lambda}\left(\frac{M\tilde{M}}{\Gamma(2\gamma)}\right)^2$ and $N_j=\left\|\xi_j\right\|_{\mathbb{X}}+M\kappa_{j}+M T\vartheta_{j} +\frac{2M T^{2\gamma-\delta}}{\Gamma(2\gamma)c^{1-\delta}}\left\|\phi_{r'}\right\|_{\mathrm{L}(J;\mathbb{R}^+)},$ for $j=1,\ldots,p.$ Using the discrete Gronwall-Bellman lemma (Lemma \ref{lem2.13}), we obtain
		\begin{align}\label{4.5}
			\left\|g_j(x(\cdot))\right\|_{\mathbb{X}}\le&N_j+R\sum_{k=0}^{j-1}N_ke^{\frac{(j+k)(j-k-1)R}{2}}=:C_j, \ \mbox{ for }\ j=1,\ldots,p.
		\end{align}
		Taking $t\in[0,\tau_1]$ and using the estimates \eqref{2.5}, \eqref{4.4},\eqref{4.5}, Lemma \ref{lem2.11} and Assumption \ref{as2.1} \textit{(H1)}-\textit{(H2)}, we evaluate
		\begin{align}\label{4.21}
			r&<\left\|(\mathscr{F}_{\lambda}x^r)(t)\right\|_\mathbb{X}\nonumber\\&=\left\|\mathcal{C}_{\gamma}(t)\psi(0)+\mathcal{T}_{\gamma}(t)\eta+\int_{0}^{t}(t-s)^{\gamma-1}\mathcal{S}_{\gamma}(t-s)\left[\mathrm{B}u^{\gamma}_{\lambda}(s)+f(s,\tilde{x}_{\varrho(s, \tilde{x}_s)})\right]\mathrm{d}s\right\|_{\mathbb{X}}\nonumber\\&\le  M\left\|\psi(0)\right\|_{\mathbb{X}}+Mt\left\|\eta\right\|_{\mathbb{X}}+\frac{M\tilde{M}}{\Gamma(2\gamma)}\int_{0}^{t}(t-s)^{2\gamma-1}\left\|u^{\gamma}_{\lambda}(s)\right\|_{\mathbb{U}}\mathrm{d}s\nonumber\\&\quad+\frac{M}{\Gamma(2\gamma)}\int_{0}^{t}(t-s)^{2\gamma-1}\left\|f(s,\tilde{x}_{\rho(s,\tilde{x}_s)})\right\|_{\mathbb{X}}\mathrm{d}s\nonumber\\&\le M\left\|\psi(0)\right\|_{\mathbb{X}}+M\tau_{1}\left\|\eta\right\|_{\mathbb{X}}+\frac{M\tilde{M}}{\Gamma(2\gamma)}\int_{0}^{\tau_1}(\tau_1-s)^{2\gamma-1}\left\|u^{\gamma}_{\lambda}(s)\right\|_{\mathbb{U}}\mathrm{d}s\nonumber\\&\quad+\frac{M}{\Gamma(2\gamma)}\int_{0}^{\tau_1}(\tau_1-s)^{2\gamma-1}\left\|f(s,\tilde{x}_{\rho(s,\tilde{x}_s)})\right\|_{\mathbb{X}}\mathrm{d}s\nonumber\\&\le M\left\|\psi(0)\right\|_{\mathbb{X}}+M\tau_{1}\left\|\eta\right\|_{\mathbb{X}}+\frac{M\tilde{M}}{\Gamma(2\gamma)}\int_{0}^{\tau_1}(\tau_1-s)^{2\gamma-1}\left\|u^{\gamma}_{1,\lambda}(s)\right\|_{\mathbb{U}}\mathrm{d}s\nonumber\\&\quad+\frac{M}{\Gamma(2\gamma)}\int_{0}^{\tau_1}(\tau_1-s)^{2\gamma-1}\phi_{r'}(s)\mathrm{d}s\nonumber\\&\le M\left\|\psi(0)\right\|_{\mathbb{X}}+M\tau_1\left\|\eta\right\|_{\mathbb{X}}+\frac{1}{\lambda}\left(\frac{M\tilde{M}}{\Gamma(2\gamma)}\right)^{2}\left\|g_0((\cdot))\right\|_{\mathbb{X}}\int_{0}^{\tau_1}(\tau_1-s)^{3\gamma-1}\mathrm{d}s\nonumber\\&\quad+\frac{M}{\Gamma(2\gamma)}\int_{0}^{\tau_1}(\tau_1-s)^{2\gamma-1}\phi_{r'}(s)\mathrm{d}s\nonumber\\&\le M\left\|\psi(0)\right\|_{\mathbb{X}}+M\tau_1\left\|\eta\right\|_{\mathbb{X}}+\frac{N_{0}}{\lambda}\left(\frac{M\tilde{M}} {\Gamma(2\gamma)}\right) ^{2} \int_{0}^{\tau_1}(\tau_1-s)^{3\gamma-1}\mathrm{d}s\nonumber\\&\quad+\frac{M}{\Gamma(2\gamma)}\left(\int_{0}^{\tau_1}(\tau_1-s)^{\frac{2\gamma-1}{1-\delta}}\right)^{1-\delta}\left(\int_{0}^{\tau_1}\phi_{r'}^{\frac{1}{\delta}}(s)\mathrm{d}s\right)^{\delta}\nonumber\\&= M\left\|\psi(0)\right\|_{\mathbb{X}}+M\tau_1\left\|\eta\right\|_{\mathbb{X}}+\frac{N_{0}\tau_1^{3\gamma}}{3\gamma\lambda}\left(\frac{M\tilde{M}}{\Gamma(2\gamma)}\right)^{2}+ \frac {M\tau_1^{2\gamma-\delta}} {\Gamma(2\gamma)c^{1-\delta}}\left\|\phi_{r'}\right\|_{\mathrm{L}^{\frac{1}{\delta}}([0,\tau_1];\mathbb{R}^+)}\nonumber\\&\le M\left\|\psi(0)\right\|_{\mathbb{X}}+MT\left\|\eta\right\|_{\mathbb{X}}+\frac{N_{0}T^{3\gamma}}{3\gamma\lambda}\left(\frac{M\tilde{M}} {\Gamma(2\gamma)}\right) ^{2}+ \frac {MT^{2\gamma-\delta}} {\Gamma(2\gamma)c^{1-\delta}}\left\|\phi_{r'}\right\|_{\mathrm{L}^{\frac{1}{\delta}}(J;\mathbb{R}^+)}\nonumber\\&<M\left\|\psi(0)\right\|_{\mathbb{X}}+MT \left\|\eta\right\|_{\mathbb{X}}+\frac{2T^{3\gamma}}{3\gamma\lambda}\left(\frac{M\tilde{M}}{\Gamma(2\gamma)}\right)^{2}\sum_{k=0}^{p}C_k+\frac {2MT^{2\gamma-\delta}} {\Gamma(2\gamma)c^{1-\delta}}\left\|\phi_{r'}\right\|_{\mathrm{L}^{\frac{1}{\delta}}(J;\mathbb{R}^+)}\nonumber\\&=M\left\|\psi(0)\right\|_{\mathbb{X}}+MT\left\|\eta\right\|_{\mathbb{X}}+R\sum_{k=0}^{p}C_k+\frac{2MT^{2\gamma-\delta}}{\Gamma(2\gamma)c^{1-\delta}}\left\|\phi_{r'}\right\|_{\mathrm{L}^{\frac{1}{\delta}}(J;\mathbb{R}^+)},
		\end{align}
		where $C_0=N_0$. For $t\in(\tau_j,s_j],\ j=1,\ldots,p$, we obtain  
		\begin{align}\label{4.22}
			r&<\left\|(\mathscr{F}_{\lambda}x^r)(t)\right\|_\mathbb{X}\le\left\|h_j(t, \tilde{x}(\tau_j^-))\right\|_{\mathbb{X}}\nonumber\\&\le \kappa_{j}< \kappa_{j}+R\sum_{k=0}^{p}C_k+\frac{2MT^{2\gamma-\delta}}{\Gamma(2\gamma)c^{1-\delta}}\left\|\phi_{r'}\right\|_{\mathrm{L}^{\frac{1}{\delta}}}(J;\mathbb{R}^+).
		\end{align}
		Taking $t\in(s_j,\tau_{j+1}], \ j=1,\dots,p$, we compute
		\begin{align}\label{4.23}
			r&<\left\|(\mathscr{F}_{\lambda}x^r)(t)\right\|_\mathbb{X}\nonumber\\&=\bigg\|\mathcal{C}_{\gamma}(t-s_j)h_j(s_j,\tilde{x}(\tau_j^-))+\mathcal{T}_{\gamma}(t-s_j)h'_j(s_j,\tilde{x}(\tau_j^-))\nonumber\\&\quad-\int_{0}^{s_j}(s_j-s)^{\gamma-1}\mathcal{S}_{\gamma}(s_j-s)\left[\mathrm{B}u^{\gamma}_{\lambda}(s)+f(s,\tilde{x}_{\varrho(s,\tilde{x}_s)})\right]\mathrm{d}s\nonumber\\&\quad+\int_{0}^{t}(t-s)^{\gamma-1}\mathcal{S}_{\gamma}(t-s)\left[\mathrm{B}u^{\gamma}_{\lambda}(s)+f(s,\tilde{x}_{\varrho(s,\tilde{x}_s)})\right]\mathrm{d}s\bigg\|_{\mathbb{X}}\nonumber\\&\leq\left\|\mathcal{C}_{\gamma}(t-s_j)h_j(s_j,\tilde{x}(\tau_j^-))\right\|_{\mathbb{X}}+\left\|\mathcal{T}_{\gamma}(t-s_j)h'_j(s_j,\tilde{x}(t_j^-))\right\|_{\mathbb{X}}\nonumber\\&\quad+\int_{0}^{s_j}(s_j-s)^{\gamma-1}\left\|\mathcal{S}_{\gamma}(s_j-s)\mathrm{B}u^{\gamma}_{\lambda}(s)\right\|_{\mathbb{X}}\mathrm{d}s\nonumber\\&\quad+\int_{0}^{s_j}(s_j-s)^{\gamma-1}\left\|\mathcal{S}_{\gamma}(s_j-s)f(s,\tilde{x}_{\varrho(s,\tilde{x}_s)})\right\|_{\mathbb{X}}\mathrm{d}s\nonumber\\&\quad+\int_{0}^t(t-s)^{\gamma-1}\left\|\mathcal{S}_{\gamma}(t-s)\mathrm{B}u^{\gamma}_{\lambda}(s)\right\|_{\mathbb{X}}\mathrm{d}s\nonumber\\&\quad+\int_{0}^t(t-s)^{\gamma-1}\left\|\mathcal{S}_{\gamma}(t-s)f(s,\tilde{x}_{\varrho(s,\tilde{x}_s)})\right\|_{\mathbb{X}}\mathrm{d}s\nonumber\\&\le M\kappa_{j}+M(t-s_j)\vartheta_{j}+\frac{M\tilde{M}}{\Gamma(2\gamma)}\int_{0}^{s_j}(s_j-s)^{2\gamma-1}\left\|u_{\lambda}^{\gamma}( s)\right\|_{\mathbb{U}}\mathrm{d}s\nonumber\\&\quad+\frac{M}{\Gamma(2\gamma)}\int_{0}^{s_j}(s_j-s)^{2\gamma-1}\left\|f(s,\tilde{x}_{\varrho(s,\tilde{x}_s)})\right\|_{\mathbb{X}}+\frac{M\tilde{M}}{\Gamma(2\gamma)}\int_{0}^{t}(t-s)^{2\gamma-1}\left\|u_{\lambda}^{\gamma}(s)\right\|_{\mathbb{U}}\mathrm{d}s\nonumber\\&\quad+\frac{M}{\Gamma(2\gamma)}\int_{0}^{t}(t-s)^{2\gamma-1}\left\|f(s,\tilde{x}_{\varrho(s,\tilde{x}_s)})\right\|_{\mathbb{X}}\mathrm{d}s \nonumber\\&\leq M\kappa_{j}+MT\vartheta_{j}+\frac{M\tilde{M}}{\Gamma(2\gamma)}\int_{0}^{s_j}(s_j-s)^{2\gamma-1}\left\|\sum_{k=0}^{j-1}u_{k,\lambda}^{\gamma}(s)\chi_{[s_k,\tau_{k+1}]}(s)\right \|_{\mathbb{U}}\mathrm{d}s\nonumber\\&\quad+\frac{M}{\Gamma(2\gamma)}\int_{0}^{s_j}(s_j-s)^{2\gamma-1}\phi_{r'}(s)\mathrm{d}s\nonumber\\&\quad+\frac{M\tilde{M}}{\Gamma(2\gamma)}\int_{0}^{\tau_{j+1}}(\tau_{j+1}-s)^{2\gamma-1}\left\|\sum_{k=0}^{j}u_{k,\lambda}^{\gamma}(s)\chi_{[s_k,\tau_{k+1}]}(s)\right\|_{\mathbb{U}}\mathrm{d}s\nonumber\\&\quad+\frac{M}{\Gamma(2\gamma)}\int_{0}^{t_{j+1}}(t_{j+1}-s)^{2\gamma-1}\phi_{r'}(s)\mathrm{d}s\nonumber\\&\leq M\kappa_{j}+MT\vartheta_{j}+\frac{1}{\lambda}\left(\frac{M\tilde{M}}{\Gamma(2\gamma)}\right)^{2}\sum_{k=0}^{j-1}\left\|g_{k}(x(\cdot))\right\|_{\mathbb{X}}\int_{s_k}^{\tau_{k+1}}(s_j-s)^{2\gamma-1}(\tau_{k+1}-s)^{\gamma}\mathrm{d}s\nonumber\\&\quad+\frac{Ms_j^{2\gamma-\delta}}{\Gamma(2\gamma)c^{1-\delta}}\left\|\phi_{r'}\right\|_{\mathrm{L}^{\frac{1}{\delta}}([0,s_j];\mathbb{R}^+)}+\frac{Mt_{j+1}^{2\gamma-\delta}}{\Gamma(2\gamma)c^{1-\delta}}\left\|\phi_{r'}\right\|_{\mathrm{L}^{\frac{1}{\delta}}([0,\tau_{j+1}];\mathbb{R}^+)}\nonumber\\&\quad+\frac{1}{\lambda}\left(\frac{M\tilde{M}}{\Gamma(2\gamma)}\right)^{2}\sum_{k=0}^{j}\left\|g_{k}(x(\cdot))\right\|_{\mathbb{X}}\int_{s_k}^{\tau_{k+1}}(s_j-s)^{2\gamma-1}(\tau_{k+1}-s)^{\gamma}\mathrm{d}s\nonumber\\&\le M\kappa_{j}+MT\vartheta_{j}+\frac{T^{3\gamma}}{3\gamma\lambda}\left(\frac{M\tilde{M}}{\Gamma(2\gamma)}\right)^{2}\sum_{k=0}^{j-1}\left\|g_k(x(\cdot))\right\|_{\mathbb{X}}+\frac{2MT^{2\gamma-\delta}}{\Gamma(2\gamma)c^{1-\delta}}\left\|\phi_{r'}\right\|_{\mathrm{L}^{\frac{1}{\delta}}(J;\mathbb{R}^+)}\nonumber\\&\quad+\frac{T^{3\gamma}}{3\gamma\lambda}\left(\frac{M\tilde{M}}{\Gamma(2\gamma)}\right)^{2}\sum_{k=0}^{j}\left\|g_k(x(\cdot))\right\|_{\mathbb{X}}
			\nonumber\\&\leq M\kappa_{j}+MT\vartheta_{j}+\frac{2T^{3\gamma}}{3\gamma\lambda}\left(\frac{M\tilde{M}}{\Gamma(2\gamma)}\right)^{2}\sum_{k=0}^{j}\left\|g_k(x(\cdot))\right\|_{\mathbb{X}}+\frac{2MT^{2\gamma-\delta}}{\Gamma(2\gamma)c^{1-\delta}}\left\|\phi_{r'}\right\|_{\mathrm{L}^{\frac{1}{\delta}}(J;\mathbb{R}^+)}\nonumber\\&\le M\kappa_{j}+MT\vartheta_{j}+\frac{2T^{3\gamma}}{3\gamma\lambda}\left(\frac{M\tilde{M}}{\Gamma(2\gamma)}\right)^{2}\sum_{k=0}^{p}\left\|g_k(x(\cdot))\right\|_{\mathbb{X}}+\frac{2MT^{2\gamma-\delta}}{\Gamma(2\gamma)c^{1-\delta}}\left\|\phi_{r'}\right\|_{\mathrm{L}^{\frac{1}{\delta}}(J;\mathbb{R}^+)}\nonumber\\&\le M\kappa_{j}+MT\vartheta_{j}+\frac{2T^{3\gamma}}{3\gamma\lambda}\left(\frac{M\tilde{M}}{\Gamma(2\gamma)}\right)^{2}\sum_{k=0}^{p}C_k+\frac{2MT^{2\gamma-\delta}}{\Gamma(2\gamma)c^{1-\delta}}\left\|\phi_{r'}\right\|_{\mathrm{L}^{\frac{1}{\delta}}(J;\mathbb{R}^+)}\nonumber\\&=M\kappa_{j}+MT\vartheta_{j}+R\sum_{k=0}^{p}C_k+\frac{2MT^{2\gamma-\delta}}{\Gamma(2\gamma)c^{1-\delta}}\left\|\phi_{r'}\right\|_{\mathrm{L}^{\frac{1}{\delta}}(J;\mathbb{R}^+)}.
		\end{align}
		Using Assumption \ref{as2.1} (\textit{H2})(ii), we easily obtain
		\begin{align*}
			\liminf_{r \rightarrow \infty }\frac {\left\|\phi_{r'}\right\|_{\mathrm{L}^{\frac{1}{\delta}}(J;\mathbb{R^+})}}{r}&=\liminf_{r\rightarrow\infty}\left (\frac {\left\|\gamma_{r'}\right\|_{\mathrm{L}^{\frac{1}{\alpha_1}}(J;\mathbb{R^+})}}{r'}\times\frac{r'}{r}\right)=K_2\zeta.
		\end{align*}
		Thus, dividing by $r$ in the expressions \eqref{4.21}, \eqref{4.22}, \eqref{4.23} and then passing $r\to\infty$, we obtain
		\begin{align*}
			\frac{2MT^{2\gamma-\delta}K_{2}\zeta}{\Gamma(2\gamma)c^{1-\delta}}\left\{1+\frac{(p+1)(p+2)R}{2}+\frac{p(p+1)R^2}{2}\sum_{k=0}^{p-1}e^{\frac{(p+k)(p-k-1)R}{2}}\right\}>1,
		\end{align*}
		which is  a contradiction to \eqref{cnd}. Therefore, for some  $r>0$, $\mathscr{F}_{\lambda}(\mathcal{E}_{r})\subset \mathcal{E}_{r}.$
		\vskip 0.1in 
		\noindent\textbf{Step (2): } Next, we verify that the operator \emph{$ \mathscr{F}_{\lambda}$ is compact.} To prove this claim, we use the infinite-dimensional version of the Ascoli-Arzela theorem (see, Theorem 3.7, Chapter 2, \cite{JYONG}). According to this theorem, it is enough to verify the following:
		\begin{itemize}
			\item [(i)] the image of $\mathcal{E}_r$ under $\mathscr{F}_{\lambda}$ is uniformly bounded (which is proved in Step (1)),
			\item [(ii)] the image of $\mathcal{E}_r$ under $\mathscr{F}_{\lambda}$ is equicontinuous,
			\item [(iii)] for any $t\in J$, the set $\mathcal{V}(t)=\{(\mathscr{F}_\lambda x)(t):x\in \mathcal{E}_r\}$ is relatively compact.
		\end{itemize}
		First, we show that the map $\mathscr{F}_{\lambda}$ is equicontinuous, that is, the image of $\mathcal{E}_r$ under $\mathscr{F}_{\lambda}$ is equicontinuous. For $t_1,t_2\in[0,\tau_1]$ such that $t_1<t_2$ and $x\in\mathcal{E}_r$, we evaluate the following:
		\begin{align}\label{4.31}
			&\left\|(\mathscr{F}_{\lambda}x)(t_2)-(\mathscr{F}_{\lambda}x)(t_1)\right\|_{\mathbb{X}}\nonumber\\&\le\left\|\left[\mathcal{C}_{\gamma}(t_2)-\mathcal{C}_{\gamma}(t_1)\right]\psi(0)\right\|_{\mathbb{X}}+\left\|\left[\mathcal{T}_{\gamma}(t_2)-\mathcal{T}_{\gamma}(t_1)\right]\eta\right\|_{\mathbb{X}}\nonumber\\&\quad+\left\|\int_{t_1}^{t_2}(t_2-s)^{\gamma-1}\mathcal{S}_{\gamma}(t_2-s)f(s,\tilde{x}_{\varrho(s,\tilde{x}_s)})\mathrm{d}s\right\|_{\mathbb{X}}\nonumber\\&\quad+\left\|\int_{t_1}^{t_2}(t_2-s)^{\gamma-1}\mathcal{S}_{\gamma}(t_2-s)\mathrm{B}u^{\gamma}_{\lambda}(s)\mathrm{d}s\right\|_{\mathbb{X}}\nonumber\\&\quad+\left\|\int_{0}^{t_1}\left[(t_2-s)^{\gamma-1}-(t_1-s)^{\gamma-1}\right]\mathcal{S}_{\gamma}(t_2-s)\mathrm{B}u^{\gamma}_{\lambda}(s)\mathrm{d}s\right\|_{\mathbb{X}}\nonumber\\&\quad+\left\|\int_{0}^{t_1}(t_1-s)^{\gamma-1}\left[\mathcal{S}_{\gamma}(t_2-s)-\mathcal{S}_{\gamma}(t_1-s)\right]\mathrm{B}u^{\gamma}_{\lambda}(s)\mathrm{d}s\right\|_{\mathbb{X}}\nonumber\\&\quad+\left\|\int_{0}^{t_1}\left[(t_2-s)^{\gamma-1}-(t_1-s)^{\gamma-1}\right]\mathcal{S}_{\gamma}(t_2-s)f(s,\tilde{x}_{\varrho(s,\tilde{x}_s)})\mathrm{d}s\right\|_{\mathbb{X}}\nonumber\\&\quad+\left\|\int_{0}^{t_1}(t_1-s)^{\gamma-1}\left[\mathcal{S}_{\gamma}(t_2-s)-\mathcal{S}_{\gamma}(t_1-s)\right]f(s,\tilde{x}_{\varrho(s,\tilde{x}_s)})\mathrm{d}s\right\|_{\mathbb{X}}\nonumber\\&\leq\left\|\mathcal{C}_{\gamma}(t_2)\psi(0)-\mathcal{C}_{\gamma}(t_1)\psi(0)\right\|_{\mathbb{X}}+\left\|\mathcal{T}_{\gamma}(t_2)-\mathcal{T}_{\gamma}(t_1)\right\|_{\mathcal{L}(\mathbb{X})}\left\|\eta\right\|_{\mathbb{X}}\nonumber\\&\quad+\frac{M}{\Gamma(2\gamma)}\int_{t_1}^{t_2}(t_2-s)^{2\gamma-1}\phi_{r'}(s)\mathrm{d}s+\frac{N_0}{\lambda}\left(\frac{M\tilde{M}}{\Gamma(2\gamma)}\right)^{2}\int_{t_1}^{t_2}(t_2-s)^{2\gamma-1}(\tau_1-s)^{\gamma}\mathrm{d}s\nonumber\\&\quad+\frac{N_0}{\lambda}\left(\frac{M\tilde{M}}{\Gamma(2\gamma)}\right)^{2}\int_{0}^{t_1}\left|(t_2-s)^{\gamma-1}-(t_1-s)^{\gamma-1}\right|(t_2-s)^{\gamma-1}(\tau_1-s)^{\gamma}\mathrm{d}s\nonumber\\&\quad+\frac{M\tilde{M}^{2}N_{0}}{\lambda\Gamma(2\gamma)}\int_{0}^{t_1}(t_1-s)^{\gamma-1}\left\|\mathcal{S}_{\gamma}(t_2-s)-\mathcal{S}_{\gamma}(t_1-s)\right\|_{\mathcal{L}(\mathbb{X})}(\tau_1-s)^{\gamma}\mathrm{d}s\nonumber\\&\quad+\frac{M}{\Gamma(2\gamma)}\int_{0}^{t_1}\left|(t_2-s)^{\gamma-1}-(t_1-s)^{\gamma-1}\right|(t_2-s)^{\gamma}\phi_{r'}(s)\mathrm{d}s\nonumber\\&\quad+\int_{0}^{t_1}(t_1-s)^{\gamma-1}\left\|\mathcal{S}_{\gamma}(t_2-s)-\mathcal{S}_{\gamma}(t_1-s)\right\|_{\mathcal{L}(\mathbb{X})}\phi_{r'}(s)\mathrm{d}s\nonumber\\&\leq\left\|\mathcal{C}_{\gamma}(t_2)\psi(0)-\mathcal{C}_{\gamma}(t_1)\psi(0)\right\|_{\mathbb{X}}+\left\|\mathcal{T}_{\gamma}(t_2)-\mathcal{T}_{\gamma}(t_1)\right\|_{\mathcal{L}(\mathbb{X})}\left\|\eta\right\|_{\mathbb{X}}\nonumber\\&\quad+\frac{M}{\Gamma(2\gamma)}\frac{(t_2-t_1)^{2\gamma-\delta}}{c^{1-\delta}}\left(\int_{t_1}^{t_2}(\phi_{r'}(s))^{\frac{1}{\delta}}\mathrm{d}s\right)^{\delta}\nonumber\\&\quad+\frac{N_{0}}{\lambda}\left(\frac{M\tilde{M}}{\Gamma(2\gamma)}\right)^{2}\int_{t_1}^{t_2}(t_2-s)^{2\gamma-1}(\tau_1-s)^{\gamma}\mathrm{d}s\nonumber\\&\quad+\frac{N_{0}}{\lambda}\left(\frac{M\tilde{M}}{\Gamma(2\gamma)}\right)^{2}\int_{0}^{t_1}\left|(t_2-s)^{\gamma-1}-(t_1-s)^{\gamma-1}\right|(t_2-s)^{\gamma-1}(\tau_1-s)^{\gamma}\mathrm{d}s\nonumber\\&\quad+\frac{M\tilde{M}^{2}N_{0}}{\lambda\Gamma(2\gamma)}\sup_{s\in[0,t_1]}\left\|\mathcal{S}_{\gamma}(t_2-s)-\mathcal{S}_{\gamma}(t_1-s)\right\|_{\mathcal{L}(\mathbb{X})}\int_{0}^{t_1}(t_1-s)^{\gamma-1}(\tau_1-s)^{\gamma}\mathrm{d}s\nonumber\\&\quad+\frac{M}{\Gamma(2\gamma)}\int_{0}^{t_1}\left|(t_2-s)^{\gamma-1}-(t_1-s)^{\gamma-1}\right|(t_2-s)^{\gamma}\phi_{r'}(s)\mathrm{d}s\nonumber\\&\quad+\sup_{s\in[0,t_1]}\left\|\mathcal{S}_{\gamma}(t_2-s)-\mathcal{S}_{\gamma}(t_1-s)\right\|_{\mathcal{L}(\mathbb{X})}\int_{0}^{t_1}(t_1-s)^{\gamma-1}\phi_{r'}(s)\mathrm{d}s.
		\end{align}	
		Similarly for $t_1,t_2\in(s_j,\tau_{j+1}], \ j=1,\ldots,p$ with $t_1<t_2$ and $x\in\mathcal{E}_r$, one can compute
		\begin{align}\label{4.32}
			&\left\|(\mathscr{F}_{\lambda}x)(t_2)-(\mathscr{F}_{\lambda}x)(t_1)\right\|_{\mathbb{X}}\nonumber\\&\le\left\|\mathcal{C}_{\gamma}(t_2-s_j)h(s_j,x(\tau_j^-))-\mathcal{C}_{\gamma}(t_1-s_j)h(s_j,x(\tau_j^-))\right\|_{\mathbb{X}}\nonumber\\&\quad+\left\|\mathcal{T}_{\gamma}(t_2-s_j)-\mathcal{T}_{\gamma}(t_1-s_j)\right\|_{\mathcal{L}(\mathbb{X})}\vartheta_{j}+\frac{M}{\Gamma(2\gamma)}\frac{(t_2-t_1)^{2\gamma-\delta}}{c^{1-\delta}}\left(\int_{t_1}^{t_2}(\phi_{r'}(s))^{\frac{1}{\delta}}\mathrm{d}s\right)^{\delta}\nonumber\\&\quad+\frac{C_j}{\lambda}\left(\frac{M\tilde{M}}{\Gamma(2\gamma)}\right)^{2}\int_{t_1}^{t_2}(t_2-s)^{2\gamma-1}(\tau_{j+1}-s)^{\gamma}\mathrm{d}s\nonumber\\&\quad+\frac{M\tilde{M}}{\Gamma(2\gamma)}\int_{0}^{t_1}\left|(t_2-s)^{\gamma-1}-(t_1-s)^{\gamma-1}\right|(t_2-s)^{\gamma}\left\|u_{\lambda}^{\gamma}(s)\right\|_{\mathbb{U}}\mathrm{d}s\nonumber\\&\quad+\tilde{M}\sup_{s\in[0,t_1]}\left\|\mathcal{S}_{\gamma}(t_2-s)-\mathcal{S}_{\gamma}(t_1-s)\right\|_{\mathcal{L}(\mathbb{X})}\int_{0}^{t_1}(t_1-s)^{\gamma-1}\left\|u_{\lambda}^{\gamma}(s)\right\|_{\mathbb{U}}\mathrm{d}s\nonumber\\&\quad+\frac{M}{\Gamma(2\gamma)}\int_{0}^{t_1}\left|(t_2-s)^{\gamma-1}-(t_1-s)^{\gamma-1}\right|(t_2-s)^{\gamma}\phi_{r'}(s)\mathrm{d}s\nonumber\\&\quad+\sup_{s\in[0,t_1]}\left\|\mathcal{S}_{\gamma}(t_2-s)-\mathcal{S}_{\gamma}(t_1-s)\right\|_{\mathcal{L}(\mathbb{X})}\int_{0}^{t_1}(t_1-s)^{\gamma-1}\phi_{r'}(s)\mathrm{d}s\nonumber\\&\leq\left\|\mathcal{C}_{\gamma}(t_2-s_j)h(s_j,x(\tau_j^-))-\mathcal{C}_{\gamma}(t_1-s_j)h(s_j,x(\tau_j^-))\right\|_{\mathbb{X}}\nonumber\\&\quad+\left\|\mathcal{T}_{\gamma}(t_2-s_j)-\mathcal{T}_{\gamma}(t_1-s_j)\right\|_{\mathcal{L}(\mathbb{X})}\vartheta_{j}+\frac{M}{\Gamma(2\gamma)}\frac{(t_2-t_1)^{2\gamma-\delta}}{c^{1-\delta}}\left(\int_{t_1}^{t_2}(\phi_{r'}(s))^{\frac{1}{\delta}}\mathrm{d}s\right)^{\delta}\nonumber\\&\quad+\frac{C_j}{\lambda}\left(\frac{M\tilde{M}}{\Gamma(2\gamma)}\right)^{2}\int_{t_1}^{t_2}(t_2-s)^{2\gamma-1}(\tau_{j+1}-s)^{\gamma}\mathrm{d}s\nonumber\\&\quad+\sum_{k=0}^{j-1}\frac{C_k}{\lambda}\left(\frac{M\tilde{M}}{\Gamma(2\gamma)}\right)^{2}\int_{s_k}^{\tau_{k+1}}\left|(t_2-s)^{\gamma-1}-(t_1-s)^{\gamma-1}\right|(t_2-s)^{\gamma}(\tau_{k+1}-s)^{\gamma}\mathrm{d}s\nonumber\\&\quad+\frac{C_j}{\lambda}\left(\frac{M\tilde{M}}{\Gamma(2\gamma)}\right)^{2}\int_{s_j}^{s_1}\left|(t_2-s)^{\gamma-1}-t_1-s)^{\gamma-1}\right|(t_2-s)^{\gamma}(\tau_{j+1}-s)^{\gamma}\mathrm{d}s\nonumber\\&\quad+\sup_{s\in[0,s_1]}\left\|\mathcal{S}_{\gamma}(t_2-s)-\mathcal{S}_{\gamma}(t_1-s)\right\|_{\mathcal{L}(\mathbb{X})}\sum_{k=0}^{j-1}\frac{C_k}{\lambda}\frac{M\tilde{M}^{2}}{\Gamma(2\gamma)}\int_{s_k}^{\tau_{k+1}}(t_1-s)^{\gamma-1}(\tau_{k+1}-s)^{\gamma}\mathrm{d}s\nonumber\\&\quad+\sup_{s\in[0,s_1]}\left\|\mathcal{S}_{\gamma}(t_2-s)-\mathcal{S}_{\gamma}(t_1-s)\right\|_{\mathcal{L}(\mathbb{X})}\frac{C_j}{\lambda}\frac{M\tilde{M}^{2}}{\Gamma(2\gamma)}\int_{s_j}^{t_1}(t_1-s)^{\gamma-1}(t_{j+1}-s)^{\gamma}\mathrm{d}s\nonumber\\&\quad+\frac{M}{\Gamma(2\gamma)}\int_{0}^{t_1}\left|(t_2-s)^{\gamma-1}-(t_1-s)^{\gamma-1}\right|(t_2-s)^{\gamma}\phi_{r'}(s)\mathrm{d}s\nonumber\\&\quad+\sup_{s\in[0,t_1]}\left\|\mathcal{S}_{\gamma}(t_2-s)-\mathcal{S}_{\gamma}(t_1-s)\right\|_{\mathcal{L}(\mathbb{X})}\int_{0}^{t_1}(t_1-s)^{\gamma-1}\phi_{r'}(s)\mathrm{d}s.
		\end{align}
		Moreover, for $t_1,t_2\in(\tau_j, s_j], \ j=1,\ldots,p$ with $t_1<t_2$ and $x\in\mathcal{E}_r$, we have 
		\begin{align}\label{4.33}
			\left\|(\mathscr{F}_{\lambda}x)(t_2)-(\mathscr{F}_{\lambda}x)(t_1)\right\|_{\mathbb{X}}&\le\left\|h_j(t_2,\tilde{x}(\tau_j^-))-h_j(t_1,\tilde{x}(\tau_j^-))\right\|_{\mathbb{X}}.
		\end{align}
		From the  facts that the operator $\mathcal{C}_{\gamma}(t)x,\ x\in\mathbb{X}$ is uniformly continuous for $t\in J$, the operators $\mathcal{T}_{\gamma}(t)$ and $\mathcal{S}_{\gamma}(t)$ are uniformly continuous for all $t\in J$ and  the impulses $h_{j}(\cdot, x)$ are continuous  for $j=1,\ldots,p$ and each $x\in\mathbb{X}$, we conclude that the right hand side of the expressions \eqref{4.31}, \eqref{4.32} and \eqref{4.33} converge to zero as $|t_2-t_1|\rightarrow 0$. Hence, the image of $\mathcal{E}_r$ under $\mathscr{F}_\lambda$ is equicontinuous.
		
		The set $\mathcal{V}(t)=\{(\mathscr{F}_\lambda x)(t):x\in \mathcal{E}_r\},$ for each $t\in J$, is relatively compact in $\mathcal{E}_r$ follows from the facts that the  operator $\mathcal{S}_{\gamma}(t)$ is compact for $t\ge 0$, the impulses $h_{j}(t,\cdot),\ h_{j}'(t,\cdot)$, for $t\in[\tau_j,s_j], j=1,\ldots,p,$ are completely continuous and also the  operator $(\mathrm{Q}f)(\cdot) =\int_{0}^{\cdot}(\cdot-s)^{\gamma-1}\mathcal{S}_{\gamma}(\cdot-s)f(s)\mathrm{d}s$ is compact (see Lemma \ref{lem2.12}). Hence, for each $t\in J$, the set $\mathcal{V} (t) = \{(\mathscr{F}_{\lambda}x)(t) : x\in\mathcal{E}_r\},$ is relatively compact. 
		
		Thus, by applying the Arzela-Ascoli theorem, we obtain that the operator $\mathscr{F}_{\lambda}$  is compact.
		\vskip 0.1in 
		\noindent\textbf{Step (3): } In the final step, we prove that the operator \emph{$ \mathscr{F}_{\lambda}$ is continuous}. In order to prove this, we consider a sequence $\{{x}^n\}^\infty_{n=1}\subseteq \mathcal{E}_r$ such that ${x}^n\rightarrow {x}\mbox{ in }{\mathcal{E}_r},$ that is,
		$$\lim\limits_{n\rightarrow \infty}\left\|x^n-x\right\|_{\mathrm{PC}(J;\mathbb{X})}=0.$$
		From Lemma \ref{lema2.1}, we have
		\begin{align*}
			\left\|\tilde{x_{s}^n}- \tilde{x_{s}}\right\|_{\mathfrak{B}}&\leq K_{2}\sup\limits_{\theta\in J}\left\|\tilde{x^{n}}(\theta)-\tilde{x}(\theta)\right\|_{\mathbb{X}}=K_{2}\left\|x^{n}-x\right\|_{\mathrm{PC}(J;\mathbb{X})}\rightarrow 0 \ \mbox{ as } \ n\rightarrow\infty,
		\end{align*}
		for all $s\in\mathfrak{R}(\varrho^{-})\cup J$. Since $\varrho(s, \tilde{x_s^k})\in\mathfrak{R}(\varrho^{-})\cup J,$ for all $k\in\mathbb{N}$, then we conclude that
		\begin{align*}
			\left\|\tilde{x^n}_{\varrho(s,\tilde{x_s^k})}-\tilde{x}_{\varrho(s,\tilde{x_s^k})}\right\|_{\mathfrak{B}}\rightarrow 0 \ \mbox{ as } \ n\rightarrow\infty, \ \mbox{ for all }\ s\in J\  \mbox{ and }\ k\in\mathbb{N}.
		\end{align*}
		In particular, we choose $k=n$ and use the above convergence together with Assumption \ref{as2.1} \textit{(H1)} to  obtain
		\begin{align}\label{4.25}
			&	\left\|f(s,\tilde{x^n}_{\varrho(s,\tilde{x_s^n})})-f(s,\tilde{x}_{\varrho(s,\tilde{x_s})})\right\|_{\mathbb{X}}\nonumber\\&\leq\left\|f(s,\tilde{x^n}_{\varrho(s,\tilde{x_s^n})})-f(s,\tilde{x}_{\varrho(s,\tilde{x_s^n})})\right\|_{\mathbb{X}}+\left\|f(s, \tilde{x}_{\varrho(s,\tilde{x_s^n})})-f(s,\tilde{x}_{\varrho(s,\tilde{x_s})})\right\|_{\mathbb{X}}\nonumber\\&\to 0\ \mbox{ as }\ n\to\infty, \mbox{ uniformly for } \ s\in J. 
		\end{align}
		Using the convergence \eqref{4.25} and the dominated convergence theorem (DCT), we conclude
		\begin{align}\label{4.26}
			&\left\|g_0(x^n(\cdot))-g_0(x(\cdot))\right\|_{\mathbb{X}}\nonumber\\&\le\left\|\int^{\tau_1}_{0}(\tau_1-s)^{\gamma-1}\mathcal{S}_{\gamma}(\tau_1-s)\left[f(s,\tilde{x^n}_{\varrho(s,\tilde{x^n_s})})-f(s,\tilde{x}_{\varrho(s,\tilde{x_s})})\right]\mathrm{d}s\right\|_{\mathbb{X}}\nonumber\\&\le\frac{M}{\Gamma(2\gamma)}\int^{\tau_1}_{0}(\tau_1-s)^{2\gamma-1}\left\|f(s,\tilde{x^n}_{\varrho(s,\tilde{x^n_s})})-f(s,\tilde{x}_{\varrho(s,\tilde{x_s})})\right\|_{\mathbb{X}}\mathrm{d}s\nonumber\\&\to 0\ \mbox{ as }\ n\to\infty.
		\end{align}
		Invoking the above convergences and the relation \eqref{2.5}, we estimate
		\begin{align*}
			&	\left\|\mathcal{R}(\lambda,\Phi_{0}^{\tau_{1}})g_0(x^{n}(\cdot))-\mathcal{R}(\lambda,\Phi_{0}^{\tau_{1}})g_0(x(\cdot))\right\|_{\mathbb{X}}\nonumber\\&=\frac{1}{\lambda}\left\|\lambda\mathcal{R}(\lambda,\Phi_{0}^{\tau_1})\left(g_0(x^{n}(\cdot))-g_0(x(\cdot))\right)\right\|_{\mathbb{X}}\nonumber\\&\leq\frac{1}{\lambda}\left\|g_0(x^{n}(\cdot))-g_0(x(\cdot))\right\|_{\mathbb{X}}\nonumber\\&\to 0 \ \mbox{ as } \ n \to \infty.
		\end{align*}
		By using the demicontinuous property of the duality mapping $\mathscr{J}:\mathbb{X}\to\mathbb{X}^{*}$, one can easily obtain
		\begin{align}\label{4.2}
			\mathscr{J}\left[\mathcal{R}(\lambda,\Phi_{0}^{\tau_1})g_0(x^{n}(\cdot))\right]\xrightharpoonup{w}\mathscr{J}\left[\mathcal{R}(\lambda,\Phi_{0}^{\tau_1})g_0(x(\cdot))\right] \ \mbox{ as } \ n\to\infty  \ \mbox{ in }\ \mathbb{X}^{*}.
		\end{align}
		From Lemma \ref{lem2.11}, we infer that the operator $\mathcal{S}_{\gamma}(t)$ is compact for $t>0$. Consequently, the operator $\mathcal{S}_{\gamma}(t)^*$ is also compact for $t>0$. Thus, by using this compactness  and the weak convergence \eqref{4.2}, one can easily conclude that 
		\begin{align}\label{4.1}
			&	\left\|u^{n,\gamma}_{0,\lambda}(t)-u_{0,\lambda}^{\gamma}(t)\right||_{\mathbb{U}}\nonumber\\&\le\left\|\mathrm{B}^*\mathcal{S}_{\gamma}(\tau_{1}-t)^*\left(\mathscr{J}\left[\mathcal{R}(\lambda,\Phi_{0}^{\tau_{1}})g_0(x^{n}(\cdot))\right]-\mathscr{J}\left[\mathcal{R}(\lambda,\Phi_{0}^{\tau_{1}})g_0(x(\cdot))\right]\right)\right\|_{\mathbb{U}}\nonumber\\&\le\tilde{M}\left\|\mathcal{S}_{\gamma}(\tau_{1}-t)^*\left(\mathscr{J}\left[\mathcal{R}(\lambda,\Phi_{0}^{\tau_{1}})g_0(x^{n}(\cdot))\right]-\mathscr{J}\left[\mathcal{R}(\lambda,\Phi_{0}^{\tau_{1}})g_0(x(\cdot))\right]\right)\right\|_{\mathbb{X}}\nonumber\\&\to 0\ \mbox{ as }\ n\to\infty, \ \mbox{uniformly for}\ t\in[0,\tau_{1}].
		\end{align}
		Similarly, for $j=1$, we evaluate
		\begin{align}\label{4.27}
			&\left\|g_1(x^n(\cdot))-g_1(x(\cdot))\right\|_{\mathbb{X}}\nonumber\\&\le\left\|\mathcal{C}_{\gamma}(\tau_{2}-s_1)\left[h_1(s_1,\tilde{x^n}(\tau_1^-))-h_1(s_1,\tilde{x}(\tau_1^-))\right]\right\|_{\mathbb{X}}\nonumber\\&\quad+\left\|\mathcal{T}_{\gamma}(\tau_{2}-s_1)\left[h'_1(s_1,\tilde{x^n}(\tau_1^-))-h'_1(s_1,\tilde{x}(\tau_1^-))\right]\right\|_{\mathbb{X}}\nonumber\\&\quad+\left\|\int_{0}^{s_{1}}(s_{1}-s)^{\gamma-1}\mathcal{S}_{\gamma}(s_{1}-s)\left[f(s,\tilde{x^n}_{\varrho(s,\tilde{x^n_s})})-f(s,\tilde{x}_{\varrho(s,\tilde{x}_s)})\right]\mathrm{d}s\right\|_{\mathbb{X}}\nonumber\\&\quad+\left\|\int_{0}^{\tau_{2}}(\tau_{2}-s)^{\gamma-1}\mathcal{S}_{\gamma}(\tau_{2}-s)\left[f(s,\tilde{x^n}_{\varrho(s,\tilde{x^n_s})})-f(s,\tilde{x}_{\varrho(s,\tilde{x}_s)})\right]\mathrm{d}s\right\|_{\mathbb{X}}\nonumber\\&\quad+\left\|\int_{0}^{s_1}(s_1-s)^{\gamma-1}\mathcal{S}_{\gamma}(s_1-s)\mathrm{B}\left[u^{n,\alpha}_{0,\lambda}(s)-u^{\alpha}_{0,\lambda}(s)\right]\mathrm{d}s\right\|_{\mathbb{X}}\nonumber\\&\quad+\left\|\int_{0}^{s_1}(\tau_2-s)^{\gamma-1}\mathcal{S}_{\gamma}(\tau_2-s)\mathrm{B}\left[u^{n,\alpha}_{0,\lambda}(s)-u^{\alpha}_{0,\lambda}(s)\right]\mathrm{d}s\right\|_{\mathbb{X}} \nonumber\\&\leq M\left\|h_1(s_1,\tilde{x^n}(\tau_1^-))-h_1(s_1,\tilde{x}(\tau_1^-))\right\|_{\mathbb{X}}\nonumber\\&\quad+M(\tau_2-s)\left\|h'_1(s_1,\tilde{x^n}(\tau_1^-))-h'_1(s_1,\tilde{x}(\tau_1^-))\right\|_{\mathbb{X}}\nonumber\\&\quad+\frac{M}{\Gamma(2\gamma)}\int_{0}^{s_{1}}(s_{1}-s)^{2\gamma-1}\left\|f(s,\tilde{x^n}_{\varrho(s,\tilde{x^n_s})})-f(s,\tilde{x}_{\varrho(s,\tilde{x}_s)})\right\|_{\mathbb{X}}\mathrm{d}s\nonumber\\&\quad+\frac{M}{\Gamma(2\gamma))}\int_{0}^{\tau_{2}}(\tau_{2}-s)^{2\gamma-1}\left\|f(s,\tilde{x^n}_{\varrho(s,\tilde{x^n_s})})-f(s,\tilde{x}_{\varrho(s,\tilde{x}_s)})\right\|_{\mathbb{X}}\mathrm{d}s\nonumber\\&\quad+\frac{M\tilde{M}}{\Gamma(2\gamma)}\int_{0}^{s_1}(s_1-s)^{2\gamma-1}\left\|u^{n,\gamma}_{0,\lambda}(s)-u^{\gamma}_{0,\lambda}(s)\right\|_{\mathbb{U}}\mathrm{d}s\nonumber\\&\quad+\frac{M\tilde{M}}{\Gamma(2\gamma)}\int_{0}^{s_1}(\tau_2-s)^{\alpha-1}\left\|u^{n,\alpha}_{0,\lambda}(s)-u^{\alpha}_{0,\lambda}(s)\right\|_{\mathbb{U}}\mathrm{d}s\nonumber\\&\to0\ \mbox{ as }\ n\to\infty, 
		\end{align}
		where we have used the convergences \eqref{4.25}, \eqref{4.1}, Assumption \ref{as2.1} (\textit{H2}) and DCT. Moreover, similar to the convergence \eqref{4.1}, one can prove
		\begin{align*}
			&\left\|u^{n,\gamma}_{1,\lambda}(t)-u_{1,\lambda}^{\gamma}(t)\right||_{\mathbb{U}}\to 0\ \mbox{ as }\ n\to\infty, \ \mbox{uniformly for}\ t\in[s_1,\tau_{2}].
		\end{align*}
		Further, employing a similar analogy as above  for $j=2,\ldots,p,$ one obtains
		\begin{align*}
			&\left\|u^{n,\gamma}_{j,\lambda}(t)-u_{j,\lambda}^{\gamma}(t)\right||_{\mathbb{U}}\to 0\ \mbox{ as }\ n\to\infty, \mbox{ uniformly for }\ t\in[s_j,\tau_{j+1}], \ j=2,\ldots,p.
		\end{align*} 
		Hence, we have
		\begin{align}\label{4.29}
			\left\|u^{n,\alpha}_{\lambda}(t)-u_{\lambda}^{\alpha}(t)\right||_{\mathbb{U}}\to 0\ \mbox{ as }\ n\to\infty,\ \mbox{ uniformly for }\ t\in[s_j,\tau_{j+1}],\ j=0,1,\ldots,p.
		\end{align}
		Using the convergences \eqref{4.25}, \eqref{4.29} and DCT, we arrive at 
		\begin{align*}
			&	\left\|(\mathscr{F}_{\lambda}x^n)(t)-(\mathscr{F}_{\lambda}x)(t)\right\|_{\mathbb{X}}\nonumber\\&\leq\int_{0}^{t}(t-s)^{\gamma-1}\left\|\mathcal{S}_{\gamma}(t-s)\mathrm{B}\left[u^{n,\alpha}_{\lambda}(s)-u_{\lambda}^{\gamma}(s)\right]\right\|_{\mathbb{X}}\mathrm{d}s\nonumber\\&\quad+\int_{0}^{t}(t-s)^{\gamma-1}\left\|\mathcal{S}_{\gamma}(t-s)\left[f(s,\tilde{x^n}_{\varrho(s,\tilde{x^n_s})})-f(s,\tilde{x}_{\varrho(s,\tilde{x}_s)})\right]\right\|_{\mathbb{X}}\mathrm{d}s\nonumber\\&\le\frac{M\tilde{M}}{\Gamma(2\gamma)}\int_{0}^{t}(t-s)^{2\gamma-1}\left\|u^{n,\gamma}_{\lambda}(s)-u_{\lambda}^{\gamma}(s)\right\|_{\mathbb{U}}\mathrm{d}s\nonumber\\&\quad+\frac{M}{\Gamma(2\gamma))}\int_{0}^{t}(t-s)^{2\gamma-1}\left\|f(s,\tilde{x^n}_{\varrho(s,\tilde{x^n_s})})-f(s,\tilde{x}_{\varrho(s, \tilde{x}_s)})\right\|_{\mathbb{X}}\mathrm{d}s\nonumber\\&\to 0 \ \mbox{ as }\ n\to\infty, \ \mbox{ uniformly for }\ t\in[0,t_1].
		\end{align*}
		Similarly, for  $t\in(s_j,\tau_{j+1}],\ j=1,\ldots,p$, we deduce that 
		\begin{align*}
			&	\left\|(\mathscr{F}_{\lambda}x^n)(t)-(\mathscr{F}_{\lambda}x)(t)\right\|_{\mathbb{X}}\nonumber\\&\le\left\|\mathcal{C}_{\gamma}(t-s_j)\left[h_j(s_j,\tilde{x^n}(\tau_j^-))-h_j(s_j,\tilde{x}(\tau_j^-))\right]\right\|_{\mathbb{X}}\nonumber\\&\quad+\left\|\mathcal{T}_{\gamma}(t-s_j)\left[h'_j(s_j,\tilde{x^n}(\tau_j^-))-h'_j(s_j,\tilde{x}(\tau_j^-))\right]\right\|_{\mathbb{X}}\nonumber\\&\quad+\int_{0}^{s_j}(s_j-s)^{\gamma-1}\left\|\mathcal{S}_{\gamma}(s_j-s)\left[f(s,\tilde{x^n}_{\varrho(s, \tilde{x^n_s})})-f(s,\tilde{x}_{\varrho(s, \tilde{x}_s)})\right]\right\|_{\mathbb{X}}\mathrm{d}s\nonumber\\&\quad+\int_{0}^{s_j}(s_j-s)^{\gamma-1}\left\|\mathcal{S}_{\gamma}(s_j-s)\mathrm{B}\left[u^{n,\gamma}_{\lambda}(s)-u_{\lambda}^{\gamma}(s)\right]\right\|_{\mathbb{X}}\mathrm{d}s\nonumber\\&\quad+\int_{0}^{t}(t-s)^{\gamma-1}\left\|\mathcal{S}_{\gamma}(t-s)\mathrm{B}\left[u^{n,\gamma}_{\lambda}(s)-u_{\lambda}^{\gamma}(s)\right]\right\|_{\mathbb{X}}\mathrm{d}s\nonumber\\&\quad+\int_{0}^{t}(t-s)^{\gamma-1}\left\|\mathcal{S}_{\gamma}(t-s)\left[f(s,\tilde{x^n}_{\varrho(s,\tilde{x^n_s})})-f(s,\tilde{x}_{\varrho(s,\tilde{x}_s)})\right]\right\|_{\mathbb{X}}\mathrm{d}s\nonumber\\&\le M\left\|h_j(s_j,\tilde{x^n}(\tau_j^-))-h_j(s_j,\tilde{x}(\tau_j^-))\right\|_{\mathbb{X}}\nonumber\\&\quad+M(t-s_j)\left\|h'_j(s_j,\tilde{x^n}(\tau_j^-))-h'_j(s_j,\tilde{x}(\tau_j^-))\right\|_{\mathbb{X}}\nonumber\\&\quad+\frac{M}{\Gamma(2\gamma)}\int_{0}^{s_j}(s_j-s)^{2\gamma-1}\left\|f(s,\tilde{x^n}_{\varrho(s,\tilde{x^n_s})})-f(s,\tilde{x}_{\varrho(s, \tilde{x}_s)})\right\|_{\mathbb{X}}\mathrm{d}s\nonumber\\&\quad+ \frac{M\tilde{M}}{\Gamma(2\gamma)}\int_{0}^{s_j}(s_j-s)^{2\gamma-1}\left\|u^{n,\gamma}_{\lambda}(s)-u_{\lambda}^{\gamma}(s)\right\|_{\mathbb{U}}\mathrm{d}s\nonumber\\&\quad+\frac{M\tilde{M}}{\Gamma(2\gamma)}\int_{0}^{t}(t-s)^{2\gamma-1}\left\|u^{n,\gamma}_{\lambda}(s)-u_{\lambda}^{\gamma}(s)\right\|_{\mathbb{U}}\mathrm{d}s\nonumber\\&\quad+\frac{M}{\Gamma(2\gamma)}\int_{0}^{t}(t-s)^{2\gamma-1}\left\|f(s,\tilde{x^n}_{\varrho(s,\tilde{x^n_s})})-f(s,\tilde{x}_{\varrho(s,\tilde{x}_s)})\right\|_{\mathbb{X}}\mathrm{d}s\nonumber\\&\to 0 \ \mbox{ as }\ n\to\infty, \ \mbox{uniformly for}\ t\in(s_j,\tau_{j+1}].
		\end{align*}
		Moreover, for $t\in(\tau_j, s_j],\ j=1,\ldots,p$, using Assumption \ref{as2.1} (\textit{H2}), we obtain
		\begin{align*}
			\left\|(\mathscr{F}_{\lambda}x^n)(t)-(\mathscr{F}_{\lambda}x)(t)\right\|_{\mathbb{X}}&\le\left\|h_j(t,\tilde{x^n}(\tau_j^-))-h_j(t,\tilde{x}(\tau_j^-))\right\|_{\mathbb{X}}\to 0 \ \mbox{ as }\ n\to\infty.
		\end{align*}
		Therefore, it follows that $\mathscr{F}_{\lambda}$ is continuous.
		
		Hence, by the application of \emph{Schauder's fixed point theorem}, we conclude that the operator $\mathscr{F}_{\lambda}$ has a fixed point in $\mathcal{E}_{r}$, or the system \eqref{1.1} has a mild solution.
	\end{proof}
	
	In order to verify the approximate controllability of the system \eqref{1.1}, we replace the assumption (\textit{H2}) of the function $f(\cdot,\cdot)$ by the following assumption: 
	\begin{enumerate}\label{as}
		\item [\textit{(H3)}] The function $ f: J \times \mathfrak{B} \rightarrow \mathbb{X} $ satisfies Assumption (\textit{H1})(i) and there exists a function $ \phi\in \mathrm{L}^{\frac{1}{\delta}}(J;\mathbb{R}^+)$ with $\delta\in[0,\gamma]$ such that $$ \|f(t,\psi)\|_{\mathbb{X}}\leq \phi(t),\ \text{ for all }\  (t,\psi) \in J \times  \mathfrak{B}. $$ 
	\end{enumerate}
	\begin{theorem}\label{thm4.4}
		Let Assumptions (R1)-(R3), (H0)-(H1), (H3) and the condition \eqref{cnd} of Theorem \ref{thm4.3} be fulfilled. Then the system \eqref{1.1} is approximately controllable.
	\end{theorem}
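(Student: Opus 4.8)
The plan is to drive the system with the feedback family $\{u^{\gamma}_{\lambda}\}_{\lambda>0}$ of \eqref{C}, fixing the terminal datum $\xi_p=x_T$ (the remaining $\xi_j$, $j<p$, being arbitrary fixed elements of $\mathbb{X}$), and to show that the associated mild solutions $x^{\lambda}$ satisfy $\|x^{\lambda}(T)-x_T\|_{\mathbb{X}}\to0$ as $\lambda\downarrow0$; approximate controllability is then immediate from the definition. First I would invoke Theorem \ref{thm4.3} to produce, for every $\lambda>0$, a mild solution $x^{\lambda}$ of \eqref{1.1} driven by $u^{\gamma}_{\lambda}$: under (\textit{H3}) the growth function may be taken as $\phi_r\equiv\phi$, so $\zeta=0$ and the smallness condition \eqref{cnd} holds automatically. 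Evaluating the mild solution on the last interval at $t=T=\tau_{p+1}$, inserting $u^{\gamma}_{p,\lambda}$, and recognising the Gramian $\Phi_{s_p}^{T}$ inside $\int_{s_p}^{T}(T-s)^{\gamma-1}\mathcal{S}_{\gamma}(T-s)\mathrm{B}\mathrm{B}^*\mathcal{S}_{\gamma}(T-s)^*\,\mathrm{d}s$, I would derive, exactly as in the linear computation \eqref{3.11}--\eqref{3.15}, the endpoint identity
\[
x^{\lambda}(T)-x_T=-\lambda\mathcal{R}(\lambda,\Phi_{s_p}^{T})\,g_p(x^{\lambda}(\cdot)),
\]
where $g_p$ now also carries the nonlinearity and the impulses through its definition.

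Next I would control the $\lambda$-dependence of $g_p(x^{\lambda}(\cdot))$. Writing $z_{\lambda}(h)=\lambda\mathcal{R}(\lambda,\Phi_{s_p}^{T})h$, Lemma \ref{lem2.9} gives the uniform bound $\|z_{\lambda}(h)\|_{\mathbb{X}}\le\|h\|_{\mathbb{X}}$, so that for any fixed $g_p^{\ast}\in\mathbb{X}$,
\[
\|x^{\lambda}(T)-x_T\|_{\mathbb{X}}\le\|g_p(x^{\lambda}(\cdot))-g_p^{\ast}\|_{\mathbb{X}}+\|z_{\lambda}(g_p^{\ast})\|_{\mathbb{X}}.
\]
The second term tends to $0$ as $\lambda\downarrow0$ by Assumption (\textit{H0}). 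For the first, I would extract a sequence $\lambda_n\downarrow0$ along which $g_p(x^{\lambda_n}(\cdot))$ converges strongly in $\mathbb{X}$ to a limit $g_p^{\ast}$, and take this as the fixed target. The convergence rests on compactness: under (\textit{H3}) the family $\{f(\cdot,\tilde{x}^{\lambda}_{\varrho(\cdot,\tilde{x}^{\lambda})})\}_{\lambda}$ is bounded in $\mathrm{L}^{1/\delta}(J;\mathbb{X})$, hence (by reflexivity when $\delta\in(0,\gamma]$, or weak-$\ast$ sequential compactness when $\delta=0$) admits a weakly convergent subsequence; applying the compact operator $\mathrm{Q}$ of Lemma \ref{lem2.12} upgrades this to strong convergence of the integral terms of $g_p$, while the complete continuity of $h_p(s_p,\cdot)$ and $h'_p(s_p,\cdot)$ in Assumption (\textit{H2})(ii), together with the boundedness of $\{x^{\lambda}(\tau_p^{-})\}$, yields (sub)convergence of the impulse terms.

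The main obstacle is precisely this strong convergence of $g_p(x^{\lambda_n}(\cdot))$, which has two delicate features. The first is the state-dependent delay: to pass to the limit in $f(\cdot,\tilde{x}^{\lambda_n}_{\varrho(\cdot,\tilde{x}^{\lambda_n})})$ one must combine the continuity in (\textit{H1})(i) with Lemma \ref{lema2.1} controlling $\|\tilde{x}^{n}_{s}-\tilde{x}_{s}\|_{\mathfrak{B}}$, exactly as in the continuity Step (3) of Theorem \ref{thm4.3}. The second, more serious feature is that $g_p$ contains the earlier-interval feedback controls $u^{\gamma}_{k,\lambda}$ ($k<p$) through integrals against the kernels $(s_p-s)^{\gamma-1}\mathcal{S}_{\gamma}(s_p-s)\mathrm{B}$ and $(\tau_{p+1}-s)^{\gamma-1}\mathcal{S}_{\gamma}(\tau_{p+1}-s)\mathrm{B}$, which do not collapse into a $\Phi$-term and must themselves be shown to (sub)converge. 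I would treat these by an induction on $j=0,1,\dots,p-1$, establishing along a single diagonal subsequence the strong convergence of each $g_j(x^{\lambda_n}(\cdot))$ and of the corresponding contributions $\int_{s_k}^{\tau_{k+1}}(\cdot-s)^{\gamma-1}\mathcal{S}_{\gamma}(\cdot-s)\mathrm{B}u^{\gamma}_{k,\lambda_n}(s)\,\mathrm{d}s$, using at each stage the compactness of $\mathcal{S}_{\gamma}$ and of $\mathcal{S}_{\gamma}^{*}$ (Lemma \ref{lem2.11}(iii)), the demicontinuity of $\mathscr{J}$, and the weak convergence of $\mathscr{J}\!\left[\mathcal{R}(\lambda_n,\Phi_{s_k}^{\tau_{k+1}})g_k(x^{\lambda_n}(\cdot))\right]$ just as in \eqref{4.2}--\eqref{4.1}. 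Once $g_p(x^{\lambda_n}(\cdot))\to g_p^{\ast}$ strongly is secured, the displayed estimate forces $\|x^{\lambda_n}(T)-x_T\|_{\mathbb{X}}\to0$, so that for all sufficiently small $\lambda$ the terminal error is below any prescribed $\epsilon>0$, proving that \eqref{1.1} is approximately controllable.
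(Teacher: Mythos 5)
Your proposal reproduces the paper's own architecture almost step for step: existence of $x^{\lambda}$ from Theorem \ref{thm4.3}, the endpoint identity $x^{\lambda}(T)-\xi_p=-\lambda\mathcal{R}(\lambda,\Phi_{s_p}^{T})g_p(x^{\lambda}(\cdot))$ exactly as in \eqref{4.35}, the splitting $\|x^{\lambda}(T)-\xi_p\|_{\mathbb{X}}\le\|g_p(x^{\lambda}(\cdot))-\omega\|_{\mathbb{X}}+\|\lambda\mathcal{R}(\lambda,\Phi_{s_p}^{T})\omega\|_{\mathbb{X}}$ via Lemma \ref{lem2.9}, strong subsequential convergence of $g_p(x^{\lambda}(\cdot))$ obtained from compactness (Lemma \ref{lem2.12} for the integral terms, complete continuity of $h_p,h_p'$ for the impulse terms), and Assumption (\textit{H0}) to kill the second term. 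Setting $\xi_p=x_T$ rather than keeping $\xi_p$ arbitrary is an equivalent formulation, and your observation that (\textit{H3}) forces $\zeta=0$, so that \eqref{cnd} is automatically satisfied, is correct and slightly sharper than the paper, which simply assumes \eqref{cnd}.

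The one place you genuinely diverge is the treatment of the earlier-interval feedback controls inside $g_p$, and there your justification has a concrete gap. Your induction invokes ``the weak convergence of $\mathscr{J}\left[\mathcal{R}(\lambda_n,\Phi_{s_k}^{\tau_{k+1}})g_k(x^{\lambda_n}(\cdot))\right]$ just as in \eqref{4.2}--\eqref{4.1}.'' But \eqref{4.2}--\eqref{4.1} are convergences in $n$ for a \emph{fixed} $\lambda>0$ (they belong to the continuity step of the fixed-point argument); what you need here is a limit as $\lambda_n\downarrow 0$, a different regime. A weakly convergent sequence must be norm bounded, and since $\mathscr{J}$ is norm-preserving this would require $\mathcal{R}(\lambda_n,\Phi_{s_k}^{\tau_{k+1}})g_k(x^{\lambda_n}(\cdot))$ to be bounded in $\mathbb{X}$; Lemma \ref{lem2.9} only yields $\|\mathcal{R}(\lambda,\Phi)h\|_{\mathbb{X}}\le\lambda^{-1}\|h\|_{\mathbb{X}}$, and (\textit{H0}) gives $\lambda\mathcal{R}(\lambda,\Phi)h\to0$ with no rate, so no such bound is available and your induction cannot start. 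The paper never takes weak limits of duality-mapped resolvents across $\lambda$: instead it estimates $\int_0^{s_p}\|u^{\gamma}_{\lambda}(s)\|_{\mathbb{U}}^2\mathrm{d}s$, extracts a single weak limit $u^{\gamma}_{\lambda}\xrightharpoonup{w}u^{\gamma}$ in $\mathrm{L}^2([0,s_p];\mathbb{U})$ (convergence \eqref{4}), and feeds this into the compact operator $\mathrm{Q}$ of Lemma \ref{lem2.12} to obtain the strong convergence of the control integrals appearing in \eqref{4.37}; no interval-by-interval analysis and no strong convergence of the individual $g_j$, $j<p$, are needed. (To be fair, the uniform-in-$\lambda$ boundedness of the controls claimed by the paper is itself delicate, since the constant it displays contains $\lambda^{-2}$ and the $C_j$ contain $\lambda^{-1}$; but its structure --- one weak $\mathrm{L}^2$ limit of the aggregate control plus compactness of $\mathrm{Q}$ --- demands strictly less than your induction, and is the route you should adopt, supplying the uniform bound rather than the unjustified weak convergence in $\mathbb{X}^*$.)
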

	\begin{proof}
		Theorem \ref{thm4.3}, ensures that for every $\lambda>0$ and $\xi_j\in \mathbb{X},$ for $j=0,1,\ldots p$, the system \eqref{1.1} has a mild solution, say, $x^{\lambda}\in\mathcal{E}_r$ with the control $u_{\lambda}^{\gamma}$ same as given in \eqref{C}. Then, it is immediate that
		\begin{equation}\label{M}
			x^{\lambda}(t)=\begin{dcases}
				\mathcal{C}_{\gamma}(t)\psi(0)+\mathcal{T}_{\gamma}(t)\eta+\int_{0}^{t}(t-s)^{\gamma-1}\mathcal{S}_{\gamma}(t-s)\left[\mathrm{B}u_{\lambda}^{\gamma}(s)+f(s,\tilde{x^\lambda}_{\varrho(s, \tilde{x^\lambda_s})})\right]\mathrm{d}s,\\ \qquad\qquad\qquad\qquad\qquad\qquad\qquad\qquad\qquad\qquad t\in[0, \tau_1],\\
				h_j(t, \tilde{x^\lambda}(\tau_j^-)),\qquad\qquad\qquad\qquad\qquad\qquad\qquad t\in(\tau_j, s_j],\ j=1,\ldots,p,\\
				\mathcal{C}_{\gamma}(t-s_j)h_j(s_j, \tilde{x^\lambda}(\tau_j^-))+\mathcal{T}_{\gamma}(t-s_j)h'_j(s_j, \tilde{x^\lambda}(\tau_j^-))\\ \quad -\int_{0}^{s_j}(s_j-s)^{\gamma-1}\mathcal{S}_{\gamma}(s_j-s)\left[\mathrm{B}u_{\lambda}^{\gamma}(s)+f(s,\tilde{x^\lambda}_{\varrho(s, \tilde{x^\lambda_s})})\right]\mathrm{d}s \\\qquad+\int_{0}^{t}(t-s)^{\gamma-1}\mathcal{S}_{\gamma}(t-s)\left[\mathrm{B}u_{\lambda}^{\gamma}(s)+f(s,\tilde{x^\lambda}_{\varrho(s, \tilde{x^\lambda_s})})\right]\mathrm{d}s,\\ \qquad\qquad\qquad\qquad\qquad\qquad\qquad\qquad\qquad\qquad t\in(s_j,\tau_{j+1}],\ j=1,\ldots,p.
			\end{dcases}
		\end{equation}
		Next, we estimate
		\begin{align}\label{4.35}
			x^{\lambda}(T)&=\mathcal{C}_{\gamma}(T-s_p)h_p(s_p,\tilde{x^\lambda}(\tau_p^-))+\mathcal{T}_{\gamma}(T-s_p)h_{p}(s_p,\tilde{x^\lambda}(\tau_p^-))\nonumber\\&\quad-\int_{0}^{s_p}(s_p-s)^{\gamma-1}\mathcal{S}_{\gamma}(s_p-s)\left[\mathrm{B}u^{\gamma}_{\lambda}(s)+f(s,\tilde{x^\lambda}_{\varrho(s,\tilde{x^\lambda_s})})\right]\mathrm{d}s\nonumber\\&\quad+\int_{0}^{T}(T-s)^{\gamma-1}\mathcal{S}_{\gamma}(T-s)\left[\mathrm{B}u^{\gamma}_{\lambda}(s)+f(s,\tilde{x^\lambda}_{\varrho(s,\tilde{x^\lambda_s})})\right]\mathrm{d}s\nonumber\\&=\mathcal{C}_{\gamma}(T-s_p)h_p(s_p,\tilde{x^\lambda}(\tau_p^-))+\mathcal{T}_{\gamma}(T-s_p)h_{p}(s_p,\tilde{x^\lambda}(\tau_p^-))\nonumber\\&\quad-\int_{0}^{s_p}(s_p-s)^{\gamma-1}\mathcal{S}_{\gamma}(s_p-s)\left[\mathrm{B}u^{\gamma}_{\lambda}(s)+f(s,\tilde{x^\lambda}_{\varrho(s,\tilde{x^\lambda_s})})\right]\mathrm{d}s\nonumber\\&\quad+\int_{0}^{T}(T-s)^{\gamma-1}\mathcal{S}_{\gamma}(T-s)f(s,\tilde{x^\lambda}_{\varrho(s,\tilde{x^\lambda_s})})+\int_{0}^{s_p}(T-s)^{\gamma-1}\mathcal{S}_{\gamma}(T-s)\mathrm{B}u^{\gamma}_{\lambda}(s)\mathrm{d}s\nonumber\\&\quad+\int_{s_p}^{T}(T-s)^{\gamma-1}\mathcal{S}_{\gamma}(T-s)\mathrm{B}\mathrm{B}^*\mathcal{S}_{\gamma}(T-s)^*\mathscr{J}\left[\mathcal{R}(\lambda,\Phi_{s_p}^{T})g_p(x^\lambda(\cdot))\right]\mathrm{d}s\nonumber\\&=\mathcal{C}_{\gamma}(T-s_p)h_p(s_p,\tilde{x^\lambda}(\tau_p^-))+\mathcal{T}_{\gamma}(T-s_p)h_{p}(s_p,\tilde{x^\lambda}(\tau_p^-))\nonumber\\&\quad-\int_{0}^{s_p}(s_p-s)^{\gamma-1}\mathcal{S}_{\gamma}(s_p-s)\left[\mathrm{B}u^{\gamma}_{\lambda}(s)+f(s,\tilde{x^\lambda}_{\varrho(s,\tilde{x^\lambda_s})})\right]\mathrm{d}s\nonumber\\&\quad+\int_{0}^{T}(T-s)^{\gamma-1}\mathcal{S}_{\gamma}(T-s)f(s,\tilde{x^\lambda}_{\varrho(s,\tilde{x^\lambda_s})})+\int_{0}^{s_p}(T-s)^{\gamma-1}\mathcal{S}_{\gamma}(T-s)\mathrm{B}u^{\gamma}_{\lambda}(s)\mathrm{d}s\nonumber\\&\quad+\Phi_{s_p}^{T}\mathscr{J}\left[\mathcal{R}(\lambda,\Phi_{s_p}^{T})g_p(x^\lambda(\cdot))\right]\nonumber\\&=\xi_p-\lambda\mathcal{R}(\lambda,\Phi_{s_p}^{T})g_p(x^\lambda(\cdot)).
		\end{align}
		The sequence $x^{\lambda}(t)$ is bounded in $\mathbb{X},$ for each $t\in J$, follows from the argument $ x^{\lambda}\in\mathcal{E}_r $. Then by the Banach-Alaoglu theorem, we can find a subsequence, still labeled as $ x^{\lambda}$ such that 
		\begin{align*}
			x^\lambda(t)\xrightharpoonup{w}z(t) \ \mbox{ in }\ \mathbb{X} \ \ \mbox{ as }\  \ \lambda\to0^+,\ t\in J.
		\end{align*}
		Using the condition (\textit{H2}) of Assumption \ref{as2.1}, we have the following convergence:
		\begin{align}
			h_p(t,x^\lambda(\tau_p^-))\to h_p(t,z(\tau_p^-)) \ \mbox{ in }\ \mathbb{X} \ \mbox{ as }\   \lambda\to0^+, \ \mbox{ for all }\ t\in J,\label{4.19}\\
			h'_p(t,x^\lambda(\tau_p^-))\to h'_p(t,z(\tau_p^-)) \ \mbox{ in }\ \mathbb{X} \ \mbox{ as }\   \lambda\to0^+, \ \mbox{ for all }\ t\in J\label{4.20}.
		\end{align}
		Moreover, by using Assumption  \textit{(H3)}, we obtain
		\begin{align}
			\int_{t_1}^{t_2}\left\|f(s,\tilde{x^{\lambda}}_{\varrho(s,\tilde{x^{\lambda}_s})})\right\|_{\mathbb{X}}^{2}\mathrm{d}s&\le \int_{t_1}^{t_2}\phi^2(s)\mathrm{d} s\leq \left(\int_{t_1}^{t_2}\phi^{\frac{1}{\delta}}(s)\mathrm{d}s\right)^{2\delta}(t_1-t_2)^{1-2\delta}<+\infty, \nonumber
		\end{align}
		for any $ t_1,t_2\in[0,T]$ with $t_1<t_2$. The above estimate guarantees that the  sequence $ \{f(\cdot, \tilde{x^{\lambda}}_{\varrho(s,\tilde{x^{\lambda}_s})}): \lambda >0\}$  in $ \mathrm{L}^2([t_1,t_2]; \mathbb{X})$ is bounded. Once again by applying the Banach-Alaoglu theorem, we can extract a subsequence still denoted by$ \{f(\cdot, \tilde{x^{\lambda}}_{\rho(s,\tilde{x^{\lambda}}_s)}): \lambda > 0 \}$ such that 
		\begin{align}\label{4.36}
			f(\cdot, \tilde{x^{\lambda}}_{\rho(s,\tilde{x^{\lambda}}_s)})\xrightharpoonup{w}f(\cdot) \ \mbox{ in }\ \mathrm{L}^2([t_1,t_2];\mathbb{X}) \ \mbox{ as }\  \lambda\to0^+.
		\end{align}
		Next, we compute  
		\begin{align*}
			&\int_{0}^{s_p}\left\|u^{\gamma}_{\lambda}(s)\right\|^2_{\mathbb{U}}\mathrm{d}s\nonumber\\&=\int_{0}^{\tau_1}\left\|u^{\gamma}_{\lambda}(s)\right\|^2_{\mathbb{U}}\mathrm{d}s+\int_{\tau_1}^{s_1}\left\|u^{\gamma}_{\lambda}(s)\right\|^2_{\mathbb{U}}\mathrm{d}s+\int_{s_1}^{\tau_2}\left\|u^{\gamma}_{\lambda}(s)\right\|^2_{\mathbb{U}}\mathrm{d}s+\cdots+\int_{\tau_p}^{s_p}\left\|u^{\gamma}_{\lambda}(s)\right\|^2_{\mathbb{U}}\mathrm{d}s\nonumber\\&=\int_{0}^{\tau_1}\left\|u^{\gamma}_{0,\lambda}(s)\right\|^2_{\mathbb{U}}\mathrm{d}s+\int_{s_1}^{\tau_2}\left\|u^{\gamma}_{1,\lambda}(s)\right\|^2_{\mathbb{U}}\mathrm{d}s+\cdots+\int_{s_{p-1}}^{\tau_p}\left\|u^{\gamma}_{p-1,\lambda}(s)\right\|^2_{\mathbb{U}}\mathrm{d}s\nonumber\\&\le\left(\frac{M\tilde{M}}{\lambda\Gamma(2\gamma)}\right)^{2}\frac{T^{2\gamma+1}}{2\gamma+1}\sum_{j=0}^{p-1}C_j^2=C,
		\end{align*}
		where $C_j,$ for $j=1,\ldots,p-1$ are the same as given in \eqref{4.5} and $C_0=N_0$ given in \eqref{4.4}. Moreover, the above fact implies that the sequence $\{u^\gamma_{\lambda}(\cdot): \lambda >0\}$  in $ \mathrm{L}^2([0,s_p]; \mathbb{U})$ is bounded. Further, by an application of the Banach-Alaoglu theorem, we obtain  a subsequence, still denoted by $\{u^\gamma_{\lambda}(\cdot): \lambda >0\}$ such that 
		\begin{align}\label{4}
			u^\gamma_{\lambda}(\cdot)\xrightharpoonup{w}u^\gamma(\cdot) \ \mbox{ in }\ \mathrm{L}^2([0,s_p];\mathbb{U}) \ \mbox{ as }\  \lambda\to0^+.
		\end{align}
		Next, we compute
		\begin{align}\label{4.37}
			&	\left\|g_{p}(x^{\lambda}(\cdot))-\omega\right\|_{\mathbb{X}}\nonumber\\&\le\left\|\mathcal{C}_{\gamma}(T-s_p)(h_p(s_p,\tilde{x^\lambda}(\tau_p^-))-h_p(s_p,z(\tau_p^-)))\right\|_{\mathbb{X}}\nonumber\\&\quad+\left\|\mathcal{T}_{\gamma}(T-s_p)(h'_p(s_p,\tilde{x^\lambda}(\tau_p^-))-h'_p(s_p,z(\tau_p^-)))\right\|_{\mathbb{X}}\nonumber\\&\quad+\left\|\int_{0}^{s_p}(s_p-s)^{\gamma-1}\mathcal{S}_{\gamma}(s_p-s)\mathrm{B}\left[u^\gamma_{\lambda}(s)-u^{\gamma}(s)\right]\mathrm{d}s\right\|_{\mathbb{X}}\nonumber\\&\quad+\left\|\int_{0}^{s_p}(s_p-s)^{\gamma-1}\mathcal{S}_{\gamma}(s_p-s)\left[f(s,\tilde{x^\lambda}_{\varrho(s,\tilde{x^\lambda_s})})-f(s)\right]\mathrm{d}s\right\|_{\mathbb{X}}\nonumber\\&\quad+\left\|\int_{0}^{s_p}(T-s)^{\gamma-1}\mathcal{S}_{\gamma}(T-s)\mathrm{B}\left[u^\gamma_{\lambda}(s)-u^{\gamma}(s)\right]\mathrm{d}s\right\|_{\mathbb{X}}\nonumber\\&\quad+\left\|\int_{0}^{T}(T-s)^{\gamma-1}\mathcal{S}_{\gamma}(T-s)\left[f(s,\tilde{x^\lambda}_{\varrho(s,\tilde{x^\lambda_s})})-f(s)\right]\mathrm{d}s\right\|_{\mathbb{X}}\nonumber\\&\le\left\|\mathcal{C}_{\gamma}(T-s_p)(h_p(s_p,x^\lambda(\tau_p^-))-h_p(s_p,z(\tau_p^-)))\right\|_{\mathbb{X}}\nonumber\\&\quad+\left\|\mathcal{T}_{\gamma}(T-s_p)(h'_p(s_p,x^\lambda(\tau_p^-))-h'_p(s_p,z(\tau_p^-)))\right\|_{\mathbb{X}}\nonumber\\&\quad+\left\|\int_{0}^{s_p}(s_p-s)^{\gamma-1}\mathcal{S}_{\gamma}(s_p-s)\mathrm{B}\left[u^\gamma_{\lambda}(s)-u^{\gamma}(s)\right]\mathrm{d}s\right\|_{\mathbb{X}}\nonumber\\&\quad+\left\|\int_{0}^{s_p}(s_p-s)^{\gamma-1}\mathcal{S}_{\gamma}(s_p-s)\left[f(s,\tilde{x^\lambda}_{\varrho(s,\tilde{x^\lambda_s})})-f(s)\right]\mathrm{d}s\right\|_{\mathbb{X}}\nonumber\\&\quad+\frac{T^{2\gamma-1}-(T-s_p)^{2\gamma-1}}{2\gamma-1}\left(\int_{0}^{s_p}\left\|\mathcal{S}_{\gamma}(T-s)\mathrm{B}\left[u^\gamma_{\lambda}(s)-u^{\gamma}(s)\right]\right\|^2_{\mathbb{X}}\mathrm{d}s\right)^{\frac{1}{2}}\nonumber\\&\quad+\left\|\int_{0}^{T}(T-s)^{\gamma-1}\mathcal{S}_{\gamma}(T-s)\left[f(s,\tilde{x^\lambda}_{\varrho(s,\tilde{x^\lambda_s})})-f(s)\right]\mathrm{d}s\right\|_{\mathbb{X}}\nonumber\\&\to 0\ \mbox{ as }\ \lambda\to0^+, 
		\end{align}
		where 
		\begin{align*}
			\omega &=\xi_p-\mathcal{C}_{\gamma}(T-s_p)h_p(s_p,z(\tau_p^-))-\mathcal{T}_{\gamma}(T-s_p)h'_p(s_p,z(\tau_p^-))\nonumber\\&\quad-\int_{0}^{T}(T-s)^{\gamma-1}\mathcal{S}_{\gamma}(T-s)f(s)\mathrm{d}s+\int_{0}^{s_p}(s_p-s)^{\gamma-1}\mathcal{S}_{\gamma}(s_p-s)\left[\mathrm{B}u^{\gamma}(s)+f(s)\right]\mathrm{d}s\nonumber\\&\quad-\int_{0}^{s_p}(T-s)^{\gamma-1}\mathcal{S}_{\gamma}(T-s)\mathrm{B}u^{\gamma}(s)\mathrm{d}s.
		\end{align*}
		In \eqref{4.37}, we have used the convergences \eqref{4.19}, \eqref{4.20}, \eqref{4.36}, \eqref{4}, DCT  and the compactness of the operator $(\mathrm{Q}f)(\cdot) =\int_{0}^{\cdot}(\cdot-s)^{\gamma-1}\mathcal{S}_{\gamma}(\cdot-s)f(s)\mathrm{d}s:\mathrm{L}^2(J;\mathbb{X})\rightarrow \mathrm{C}(J;\mathbb{X}) $ (see Lemma \ref{lem2.12}).
		Finally, by using the equality \eqref{4.35}, we estimate
		\begin{align}
			\left\|x^{\lambda}(T)-\xi_p\right\|_{\mathbb{X}}&\le\left\|\lambda\mathcal{R}(\lambda,\Phi_{s_p}^{T})g_p(x(\cdot))\right\|_{\mathbb{X}}\nonumber\\&\le\left\|\lambda\mathcal{R}(\lambda,\Phi_{s_p}^{T})(g_p(x(\cdot))-\omega)\right\|_{\mathbb{X}}+\left\|\lambda\mathcal{R}(\lambda,\Phi_{s_p}^{T})\omega\right\|_{\mathbb{X}}\nonumber\\&\le\left\|\lambda\mathcal{R}(\lambda,\Phi_{s_p}^{T})\right\|_{\mathcal{L}(\mathbb{X})}\left\|g_p(x(\cdot))-\omega\right\|_{\mathbb{X}}+\left\|\lambda\mathcal{R}(\lambda,\Phi_{s_p}^{T})\omega\right\|_{\mathbb{X}}.
		\end{align}
		Using the above inequality, \eqref{4.37} and Assumption \ref{as2.1} (\textit{H0}), we obtain
		\begin{align*}
			\left\|x^{\lambda}(T)-\xi_p\right\|_{\mathbb{X}}\to0\ \mbox{ as }\ \lambda\to0^+.
		\end{align*}
		Hence, the system \eqref{1.1} is approximately controllable on $J$.
	\end{proof}
	
	\section{Application}\label{app}\setcounter{equation}{0}
	In this section, we investigate the approximate controllability of the fractional order wave equation with non-instantaneous impulses and delay:
	
	\begin{Ex}\label{ex1} Let us consider the following fractional system:
		\begin{equation}\label{ex}
			\left\{
			\begin{aligned}
				\frac{\partial^\alpha v(t,\xi)}{\partial t^\alpha}&=\frac{\partial^2v(t,\xi)}{\partial \xi^2}+w(t,\xi)+\int_{-\infty}^{t}b(t-s)v(s-\beta(\|v(t)\|),\xi)\mathrm{d}s, \\&\qquad \qquad \qquad\qquad \ t\in\bigcup_{j=0}^{m} (s_j, \tau_{j+1}]\subset J=[0,T], \ \xi\in[0,\pi], \\
				v(t,\xi)&=h_j(t,v(t_j^-,\xi)),\ \ \ t\in(\tau_j,s_j],\ j=1,\ldots, m,\ \xi\in[0,\pi],\\
				\frac{\partial v(t,\xi)}{\partial t}&=\frac{\partial h_j(t,v(\tau_j^-,\xi))}{\partial t},\ t\in(\tau_j,s_j],\ j=1\ldots,m,\ \xi\in[0,\pi],\\
				v(t,0)&=0=v(t,\pi), \qquad \  t\in J, \\
				v(\theta,\xi)&=\psi(\theta,\xi), \frac{\partial v(0,\xi)}{\partial t}=\zeta_0(\xi),\ \xi\in[0,\pi], \ \theta\leq0,
			\end{aligned}
			\right.
		\end{equation}
		where $\alpha\in(1,2)$ and the function $w:J\times[0,\pi]\to[0,\pi]$ is square integrable in $t$ and $\xi$, and the functions $\beta:[0,\infty)\to[0,\infty)$ and $b:[0, \infty)\rightarrow\mathbb{R}$ are also continuous. The functions $\psi(\cdot,\cdot)$ and $h_j$ for $j=1,\ldots,p$ satisfy suitable conditions, which will be specified later.
	\end{Ex}
	
	\vskip 0.1 in
	\noindent\textbf{Step 1:} \emph{Strongly continuous families and phase space:} Let $\mathbb{X}_p= \mathrm{L}^{p}([0,\pi];\mathbb{R})$, for $p\in[2,\infty)$, and $\mathbb{U}=\mathrm{L}^{2}([0,\pi];\mathbb{R})$. Note that  $\mathbb{X}_p$ is a separable reflexive Banach space with strictly convex dual $\mathbb{X}_p^*=\mathrm{L}^{\frac{p}{p-1}}([0,\pi];\mathbb{R})$ and $\mathbb{U}$ is separable. We define the operator $\mathrm{A}_p:\mathrm{D}(\mathrm{A}_p)\subset\mathbb{X}_p\rightarrow \mathbb{X}_p$ as
	\begin{align}\label{5.9}
		\mathrm{A}_pf(\xi)=f''(\xi), \ \text{ where }\ \mathrm{D}(\mathrm{A}_p)= \mathrm{W}^{2,p}([0,\pi];\mathbb{R})\cap\mathrm{W}_0^{1,p}([0,\pi];\mathbb{R}).\end{align}
	Moreover, the spectrum of the operator $\mathrm{A}_p$ is given by $\sigma(\mathrm{A}_p)=\{-n^2:n\in\mathbb{N}\}$. Then, for every $f\in\mathrm{D}(\mathrm{A}_p)$, the operator $\mathrm{A}_p$ can be written as
	\begin{align*}
		\mathrm{A}_pf&= \sum_{n=1}^{\infty}-n^{2}\langle f, w_{n} \rangle  w_{n},\ \langle f,w_n\rangle :=\int_0^{\pi}f(\xi)w_n(\xi)\mathrm{d}\xi,
	\end{align*}
	where $w_n(\xi)=\sqrt{\frac{2}{\pi}}\sin(n\xi)$ are the normalized eigenfunctions  (with respect to the $\mathbb{X}_2$ norm) of the operator $\mathrm{A}_p$ corresponding to the eigenvalues $-n^2$, $n\in\mathbb{N}$. The operator  $\mathrm{A}_p$ satisfies all the conditions (\textit{R1})-(\textit{R3}) of Assumption \ref{ass2.1} (see application section of \cite{SS}). By applying  Lemma \ref{lem2.1}, we deduce the existence of a strongly continuous cosine family $\mathrm{C}_p(t),\  t \in \mathbb{R}$ in $\mathbb{X}_p$. The compactness of  the associated strongly continuous sine family $\mathrm{S}_p(t), t\in \mathbb{R},$  follows by  Lemma \ref{lem2.2}. The strongly continuous families $\{\mathrm{C}_p(t):t\in\mathbb{R}\}$ and $\{\mathrm{S}_p(t):t\in\mathbb{R}\}$ can be written as
	\begin{align*}
		\mathrm{C}_p(t)f&=\sum_{n=1}^{\infty}\cos(nt)\langle f, w_{n} \rangle  w_{n},\ f\in\mathbb{X}_p,\nonumber\\
		\mathrm{S}_p(t)f&=\sum_{n=1}^{\infty}\frac{1}{n}\sin(nt)\langle f, w_{n} \rangle  w_{n},\ f\in\mathbb{X}_p.\nonumber
	\end{align*}
	Next, we define the operator
	\begin{align*}
		\mathcal{S}_{\gamma,p}(t)f&=\int_{0}^{\infty} \gamma\theta\mathrm{M}_{\gamma}(\theta)\sum_{n=1}^{\infty}\frac{1}{n}\sin(nt^{\gamma}\theta)\langle f, w_{n} \rangle w_{n}\mathrm{d}\theta,\ f\in\mathbb{X}_p,  
	\end{align*}
	where $\gamma=\frac{\alpha}{2}$.
	\vskip 0.2 cm
	\noindent \emph{Phase space:} The space $\mathfrak{B}=\mathcal{PC}_{g}(\mathbb{X}_p)$ (see, Example \ref{exm2.8})  is a phase space satisfying the axioms (A1) and (A2) with $\mathcal{P}(t) =t$ and $\mathcal{Q}(t)= \mathcal{O}(t)$, (see, section 5 of  \cite{SS}). Specifically, one can choose the function $g(\theta)=e^{a\theta}$, for $a<0$. In this case, we assume the following condition:
	\begin{itemize}
		\item [$(C1)$]  The function $\psi\in\mathcal{PC}_{g}(\mathbb{X})$, we assume $L:=\esssup\limits_{\theta\in(-\infty,0]}|b(-\theta)|g(\theta)$.
	\end{itemize}
	\vskip 0.1 cm 
	\noindent\textbf{Step 2:} \emph{Approximate controllability.}
	Let us define $$x(t)(\xi):=v(t,\xi),\ \mbox{ for }\ t\in J\ \mbox{ and }\ \xi\in[0,\pi],$$ and the operator $\mathrm{B}:\mathbb{U}\to\mathbb{X}_p$ as  $$\mathrm{B}u(t)(\xi):=w(t,\xi)=\int_{0}^{\pi}K(\zeta,\xi)u(t)(\zeta)\mathrm{d}\zeta, \ t\in J,\ \xi\in [0,\pi],$$ where $K\in\mathrm{C}([0,\pi]\times[0,\pi];\mathbb{R})$ is a symmetric kernel, that is, $K(\zeta,\xi)=K(\xi,\zeta),$ for all $\zeta,\xi\in [0,\pi]$. We assume that the operator $\mathrm{B}$ is one-one. Hence, the operator $\mathrm{B}$ is bounded (see application section of \cite{SMJ}). The symmetry of the kernel ensures that the operator $\mathrm{B}$ is self-adjoint, that is,  $\mathrm{B}=\mathrm{B}^*$ . For example, one can take $K(\xi,\zeta)=e^{(\xi+\zeta)},\ \mbox{for all}\ \xi, \zeta\in [0,\pi]$.
	The function $\psi:(-\infty,0]\rightarrow\mathbb{X}$ is given as
	\begin{align}
		\nonumber \psi(t)(\xi)=\psi(t,\xi),\ \xi\in[0,\pi].
	\end{align}	 
	Next, the functions $f, \varrho:J\times \mathfrak{B}\to\mathbb{X}$ are defined as
	\begin{align}
		\nonumber f(t,\psi)\xi&:=\int_{-\infty}^{0}b(-\theta)\psi(\theta,\xi)\mathrm{d}\theta,\\
		\nonumber\varrho(t,\psi)&:=t-\beta(\|\psi(0)\|_{\mathbb{X}}),
	\end{align}	
	for $\xi\in[0,\pi]$. It is easy to verify that the function $f$ is continuous and uniformly bounded by $L$. Hence, the function $f$ fulfills the condition \textit{$(H1)$} of Assumption \ref{as2.1} and the condition \textit{$(H3)$}.
	
	Moreover, the impulse functions $h_{j}:[\tau_j,t_j]\times\mathbb{X}_p\to\mathbb{X}_p,$ for $j=1,\ldots,m,$ are defined as 
	\begin{align*}
		h_{j}(t,x)\xi:=\int_{0}^{\pi}\rho_j(t,\xi,z)\cos^2(x(\tau_j^-)z)\mathrm{d}z, \ \mbox{ for }\ t\in(\tau_j,s_j],
	\end{align*}
	where, $\rho_j\in\mathrm{C}^{1}(J\times[0,\pi]^2;\mathbb{R})$. Clearly, the impulses $h_{j}$ for $j=1,\ldots,m,$ satisfy the condition \textit{$(H2)$} of Assumption \ref{as2.1}.

	Using the above transformations, the system \eqref{ex} can be rewritten in the form \eqref{1.1} which satisfies Assumption \ref{as2.1} \textit{(H1)-(H2)} and Assumption (\textit{H3}). Next, we show that  the corresponding linear fractional control system of the equation \eqref{1.1} is approximately controllable. To achieve this, we assume that
	$$\mathrm{B}^*\mathcal{S}_{\gamma,p}(T-t)^*x^*=0,\ \mbox{ for any}\ x^*\in\mathbb{X}_p^*,\ 0\le t<T.$$ Since the operator $\mathrm{B}^*$ is one-one, we get that
	\begin{align}\label{5.3}
		\mathcal{S}_{\gamma,p}(T-t)^*x^*=0,\ \mbox{ for all } \ t\in [0,T).
	\end{align}
	Let us compute
	\begin{align}\label{5.4}
		\lim_{t\downarrow T}\frac{\mathcal{S}_{\gamma,p}(T-t)^*x^*}{(T-t)^\gamma}&=	\lim_{t\downarrow T}\frac{\int_{0}^{\infty}\gamma\theta\mathrm{M}_{\gamma}(\theta)\sum_{n=1}^{\infty}\frac{1}{n}\sin(n(T-t)^{\gamma}\theta)\langle x^*, w_{n} \rangle w_{n}\mathrm{d}\theta}{(T-t)^{\gamma}}\nonumber\\&=\lim_{t\downarrow T}\sum_{n=1}^{\infty}\frac{1}{n}\gamma\int_{0}^{\infty}\theta\mathrm{M}_{\gamma}(\theta)\left(\frac{\sin(n(T-t)^{\gamma}\theta)}{(T-t)^\gamma}\right)\mathrm{d}\theta\langle x^*, w_{n} \rangle w_n \nonumber\\&=\sum_{n=1}^{\infty}\frac{1}{n}\gamma\int_{0}^{\infty}\theta\mathrm{M}_{\gamma}(\theta)\left(\lim_{t\downarrow T}\frac{\sin(n(T-t)^{\gamma}\theta)}{(T-t)^\gamma}\right)\mathrm{d}\theta\langle x^*, w_{n} \rangle w_n\nonumber\\&=\sum_{n=1}^{\infty} \gamma  \int_{0}^{\infty} \theta^2\mathrm{M}_{\gamma}\mathrm{d}\theta\langle x^*, w_{n} \rangle w_n\nonumber\\&=\frac{2\gamma}{\Gamma(1+2\gamma)}\sum_{n=1}^{\infty}\langle x^*, w_{n} \rangle w_n,
	\end{align}
	where $\gamma\in(0,1)$. Combining the estimates \eqref{5.3} and \eqref{5.4}, we obtain
	\begin{align*}
		\sum_{n=1}^{\infty}\langle x^*, w_{n} \rangle w_n=0,
	\end{align*}
	which implies that $x^*=0$.	Thus, by applying Lemma \ref{lem3.4} and Remark \ref{rem3.4}, we deduce that the corresponding linear fractional control system of \eqref{1.1} is approximately controllable. Finally, by Theorem \ref{thm4.4}, we conclude that the semilinear fractional control system \eqref{1.1} (equivalent to the system \eqref{ex}) is approximately controllable.
	
	\section{Conclusions} In this manuscript, we considered the semilinear fractional control system \eqref{1.1} of order $1<\alpha<2$, where the operator $\mathrm{A}$ generates a strongly continuous cosine family $\{\mathrm{C}(t):t\in\mathbb{R}\}$. We first formulated the linear regulator problem and obtained the optimal control in the feedback form. With the help of this optimal control, we discussed the approximate controllability of the linear control system \eqref{3.2}. Then, we proved the existence of a mild solution of the semilinear fractional control system \eqref{1.1} for the suitable control function defined in \eqref{C} via the resolvent operator and the Schauder fixed point theorem. Then, we derived sufficient conditions for the approximate controllability of the system \eqref{1.1}, whenever the corresponding linear control system is approximately controllable. Moreover, we modified the axioms of phase space to deal with impulsive functional differential equations. Note that, in this work, we have used the idea of $\alpha/2$-resolvent family related to cosine family generated by the operator $\mathrm{A}$. 
	
	The existence of solutions and the approximate controllability results of the semilinear fractional abstract Cauchy problem of order $\alpha\in(1,2)$ by using the idea related to the $\alpha$-resolvent family introduced in \cite{JFA} is not investigated so far, and it will be  addressed in a  future work.  
	
	\medskip\noindent
	{\bf Acknowledgments:} S. Arora would like first to thank the Council of Scientific and Industrial Research, New Delhi, Government of India (File No. 09/143(0931)/2013 EMR-I), for financial support to carry out his research work and also thank the Department of Mathematics, Indian Institute of Technology Roorkee (IIT Roorkee), for providing stimulating scientific environment and resources. M. T. Mohan would like to thank the Department of Science and Technology (DST), Govt of India for Innovation in Science Pursuit for Inspired Research (INSPIRE) Faculty Award (IFA17-MA110). J. Dabas would like to thank the Council of Scientific and Industrial Research, New Delhi, Government of India, project (ref. no. 25(0315)/20/EMR-II).


\begin{thebibliography}{10}
		
		\bibitem{SAM} S. Arora, M.T. Mohan and J. Dabas, Approximate controllability of a Sobolev type impulsive functional evolution system in Banach spaces, \emph{Math. Control Relat. Fields}, (2020), \url{https://doi:10.3934/mcrf.2020049}.
		
		\bibitem{SMJ} S. Arora, M.T. Mohan and J. Dabas, Approximate controllability of fractional order non-instantaneous impulsive functional evolution equations with state-dependent delay in Banach spaces, \url{https://arxiv.org/abs/2106.02939v1}.
		
		\bibitem{SM} S. Arora, M.T. Mohan and J. Dabas, Approximate controllability of the non-autonomous impulsive evolution equation with state-dependent delay in Banach spaces, \emph{Nonlinear Anal. Hybrid Syst.}, {\bf 39} (2021), 100989.
		
		\bibitem{AO} O. Arino, K. Boushaba and A. Boussouar, A mathematical model of the dynamics of the phytoplankton-nutrient system. Spatial hetrogeneity in ecological models, {\em Nonlinear Anal. Real World Appl.}, {\bf 1} (2000), 69-87.
		
		\bibitem{AA} E. Asplund, \emph{Averaged norms}, Israel J. Math., {\bf 5} (1967), 227-233.
		
		\bibitem{VB} V. Barbu, \emph{Analysis and Control of Nonlinear Infinite Dimensional Systems}, Academic Press, New York, 1993.
		
		\bibitem{VB1}  V. Barbu, \emph{Controllability and Stabilization of Parabolic Equations}, Springer International Publishing AG,  2018. 
		
		\bibitem{BM} M. Benchohra, J. Henderson and S. Ntouyas, {\em Impulsive Differential Equations and Inclusions}, Hindawi Publishing Corporation, New York 2006.
		
		\bibitem{FC} F. Chen, D. Sun and J. Shi, Periodicity in a food-limited population model with toxicants and state dependent delays, {\em J. Math. Anal. Appl.}, {\bf 288} (2003), 136-146.
		
		\bibitem{DNC} D.N. Chalishajar, K. Malar and K. Karthikeyan, Approximate controllability of abstract impulsive fractional neutral evolution equations with infinite delay in Banach spaces, \emph{Electron. J. Differ. Equ.}, {\bf 275} (2013), 1–21 .
		
		\bibitem{JA} J. dabs, A. chauhan and M. Kumar, Existence of the mild solutions for impulsive fractional equations with infinite delay, \emph{Int. J. Differ. Equ.}, {\bf 2011} (2011), 793023.
		
		\bibitem{GDJZ} G. Da Prato and J. Zabczyk, \emph{Ergodicity for Infinite Dimensional Systems}, London Mathematical Society Lecture Notes, Cambridge University Press, 1996.
		
		\bibitem{EI} I. Ekeland and T. Turnbull, \emph{Infinite Dimensional Optimization and Convexity}, Chicago press, London, 1983.
		
		\bibitem{MFb} M. Fabian et.al.,	\emph{Functional Analysis and Infinite Dimensional Geometry}, CMS Books in Mathematics, Springer-Verlag, New York, 2001.
		
		
		\bibitem{GR} G.R. Gautam and J. Dabas, Mild solutions for class of neutral fractional functional differential equations with not instantaneous impulses, \emph{Appl. Math. Comput.}, {\bf 259} (2015), 480-489.
		
		
		\bibitem{AGK} A. Grudzka and K. Rykaczewski, On approximate controllability of functional impulsive evolution inclusions in a Hilbert space, \emph{J. Optim. Theory Appl.}, {\bf 166} (2015), 414-439.
		
		\bibitem{GU} L. Guedda, Some remarks in the study of impulsive differential equations and inclusions with delay, \emph{Fixed Point Theory}, {\bf 12} (2011), 349–354.
		
		\bibitem{JFA} H.R. Henr\'iquez, J. G. Mesquita, J. C.Pozo, Existence of solutions of the abstract Cauchy problem of fractional order, \emph{J. Funct. Anal.}, (2021), 109028, doi: https://doi.org/10.1016/j.jfa.2021.109028.
		
		\bibitem{Jh} J.W. He, Y. Liang, B. Ahmad and Y. Zhou, Nonlocal fractional evolution inclusions of order $\alpha\in(1,2)$, \emph{Mathematics}, {\bf 209} (2019), 1–17 .
		
		\bibitem{EHD} E. Hernández and D. O’Regan, On a new class of abstract impulsive differential equations, \emph{Proc. Amer. Math. Soc.} {\bf 141} (2013), 1641–1649.
		
		\bibitem{HlR} R. Hilfer, \emph{Applications of Fractional Calculus in Physics}, World Scientific: Singapore, 2000. 
		
		\bibitem{HY}  Y. Hino, S. Murakami and T. Naito, \emph{Functional Differential Equations with Infinite Delay}, Lecture Notes in Mathematics, Springer-Verlag, Berlin, 1991.
		
		
		\bibitem{AAK} A.A. Kilbas, H.M. Srivastava and J.J. Trujillo, \emph{Theory and Applications of Fractional Differential Equations, in: North-Holland Mathematics Studies}, Elsevier Science B.V., Amsterdam, 2006.
		
		\bibitem{JKI} J. Kisyński, On cosine operator functions and one parameter group of operators, \emph{Studia Math.}, {\bf 49} (1972), 93–105.
		
		\bibitem{SNS} S. Kumar and N. Sukavanam, Approximate controllability of fractional order semilinear systems with bounded delay, \emph{J. Differential Equations}, {\bf 252} (2012),  6163–6174.
		
		\bibitem{Jkr} J. Klafter and R. Metzler, The random walk’s guide to anomalous diffusion: a fractional dynamics approach, \emph{Phys. Rep.} {\bf 339} (2000), 1-77.
		
		\bibitem{Jkj}J. Kemppainen, J. Siljander, V. Vergara and R. Zacher, Decay estimates for time fractional and other non-local in time subdiffusion equations in $\mathbb{R}^d$, \emph{Math. Annalen} {\bf 366} (2016) 941-979.
		
		\bibitem{LVB} V. Lakshmikantham, D. D. Bainov and P.S. Simeonov, {\em Theory of Impulsive Differential Equations}, World Scientific, Singapore (1989).
		
		\bibitem{JYONG}	X. Li. and J. Yong, \emph{Optimal Control Theory for Infinite Dimensional Systems}, Birkh\"auser Basel, 1995.
		
		\bibitem{LA} A. Lunardi, On the linear heat equation with fading memory, {\em SIAM J. Math. Anal.}, {\bf 21} (1990), 1213-1224.
		
		\bibitem{MIN}  N.I. Mahmudov, Approximate controllability of fractional neutral evolution equations in Banach spaces, \emph{Abstr. Appl. Anal.}, {\bf 2013} (2013), 531894.
		
		\bibitem{NMI} N.I. Mahmudov, Approximate controllability of fractional Sobolev-type evolution equations in Banach spaces, \emph{Abstr. Appl. Anal.}, {\bf 2013} (2013), 502839.
		
		\bibitem{M} N.I. Mahmudov, Approximate controllability of semilinear deterministic and stochastic evolution equations in abstract spaces, \emph{SIAM J. Control Optim.}, {\bf 42} (2003), 1604-1622.
		
		
		\bibitem{Fm}F. Mainardi, \emph{Fractional Calculus and Waves in Linear Viscoelasticity: An Introduction to Mathematical Models}, Imperial College Press, 2010.
		
		\bibitem{NE} D. Nesic and A.R. Teel, Input-to-state stability of networked control systems, \emph{Automatica} {\bf 40} (2004), 2121–2128.
		
		
		\bibitem{NJ} J.W. Nunziato, On heat conduction in materials with memory, {\em Quart. Appl. Math.}, {\bf 29} (1971), 187-204.
		
		\bibitem{VOj} V. Obukhovski and J.C. Yao, On impulsive functional differential inclusions with Hille-Yosida operators in Banach spaces, \emph{Nonlinear Anal.}, {\bf 73} (2010), 1715-1728.
		
		\bibitem{P} {\sc A. Pazy}, \emph{Semigroup of Linear operators and Applications to partial equations}, Springer-Verlag, New York, 1983.
		
		\bibitem{Ip} I. Podlubny,\emph{ Fractional Differential Equations}, Academic Press, San Diego, 1999.
		
		
		\bibitem{Qh} H. Qin, X. Zuo, J. Liu and L. Liu, Approximate controllability and optimal controls of fractional dynamical systems of order $1 < q < 2$ in Banach spaces, \emph{Adv. Differ. Equ.}, {\bf 2015} (2015) 1–17.
		
		\bibitem{Mvv} M.M. Raja and V. Vijayakumar, New results concerning to approximate controllability of fractional integro-differential evolution equations of order $1<r<2$, \emph{Numer. Methods Partial Differ. Eq.}, (2020), 1-16, https://doi.org/10.1002/num.22653.
		
		
		\bibitem{MMr} M.M. Raja, V. Vijayakumar, and R. Udhayakumar, Results on the existence and controllability of fractional integro-differential system of order $1< r< 2$ via measure of noncompactness, \emph{Chaos, Solitons \& Fractals}, {\bf 139} (2020): 110299.
		
		\bibitem{MMv} M.M. Raja, V. Vijayakumar, R. Udhayakumar and Y. Zhou, A new approach on the approximate controllability of fractional differential evolution equations of order $1 < r < 2$ in Hilbert spaces, \emph{Chaos, Solitons \& Fractals}, {\bf 141} (2020): 110310.
		
		
		\bibitem{YAR} Y.A. Rossikhin and M.V. Shitikova, Application of fractional derivatives to the analysis of damped vibrations of viscoelastic single mass systems, \emph{Acta Mech.}, {\bf 120} (1997), 109–125.
		
		\bibitem{IPF} S.G. Samko, A.A. Kilbas and O.I. Marichev, \emph{Fractional Integrals and Derivatives}, Gordon and Breach Science Publishers, Switzerland, 1993.
		
		\bibitem{ER} R. Sakthivel and E.R. Anandhi, Approximate controllability of impulsive differential equations with state-dependent delay, {\em Internat. J. Control}, {\bf 83} (2010), 387-393.
		
		
		\bibitem{Sr} R. Sakthivel, R. Ganesh, Y. Ren and S.M. Anthoni, Approximate controllability of nonlinear fractional dynamical systems, \emph{Commun. Nonlinear Sci. Numer. Simul.}, {\bf 18} (2013), 3498–508.
		
		
		\bibitem{IPF} S.G. Samko, A.A. Kilbas and O.I. Marichev, \emph{Fractional Integrals and Derivatives}, Gordon and Breach Science Publishers, Switzerland, 1993.
		
		\bibitem{Sbl} L. Shu, X.B. Shu and J. Mao, Approximate controllability and existence of mild solutions for Riemann–Liouville fractional stochastic evolution equations with nonlocal conditions of order $1 < \alpha< 2$, \emph{Fract. Calculus Appl. Anal.}, {\bf 22} (2019) 1086–1112 .
		
		\bibitem{Sb} X.B. Shu and Q. Wang, The existence and uniqueness of mild solutions for fractional differential equations with nonlocal conditions of order $1 <\alpha< 2$, \emph{Comput. Math. Appl.}, {\bf 64} (2012), 2100–2110.
		
		
		\bibitem{SS} S. Singh, S. Arora, M. T. Mohan and J. dabas, Approximate controllability  of second order impulsive systems with state-dependent delay in Banach spaces, {\em Evol. Equ. Control Theory}, (2020), doi: 10.3934/eect.2020103.
		
		\bibitem{MST} M.S. Tavazoei, M. Haeri, S. Jafari, S. Bolouki and M. Siami, Some applications of fractional calculus in suppression of chaotic oscillations, \emph{IEEE Trans. Ind. Electron.}, {\bf 55} (2008), 4094–4101.
		
		\bibitem{TCC1} C.C. Travis and G.F. Webb, Cosine families and abstract nonlinear second order differential equations, {\em Acta Math. Hungar.}, {\bf 32} (1978), 75-96.
		
		\bibitem{TCC2} C.C. Travis and G.F. Webb, Second order differential equations in Banach space, in \emph{Nonlinear Equations in Abstract Spaces}, Academic Press, 1978, 331-361.
		
		
		\bibitem{TRR} R. Triggiani, Addendum: A note on the lack of exact controllability for mild solutions in Banach spaces, \emph{SIAM J. Control Optim.}, {\bf 18} (1980), 98.
		
		\bibitem{TR} R. Triggiani, A note on the lack of exact controllability for mild solutions in Banach spaces,\emph{SIAM J. Control Optim.}, {\bf 15} (1977), 407-411.
		
		\bibitem{JRW} J.R. Wang and M. Fečkan, A general class of impulsive evolution equations, \emph{Topol. Methods Nonlinear Anal.} {\bf 46} (2015), 915–933.
		
		\bibitem{JMF} J.R. Wang and M. Fečkan, \emph{Non-Instantaneous Impulsive Differential Equations}, IOP, 2018.
		
		\bibitem{JWY} J. Wang and Y. Zhou, Existence and controllability results for fractional semilinear differential inclusions, \emph{Nonlinear Anal. Real. World Appl.}, {\bf 12} (2011), 3642–3653.
		
		\bibitem{JRY} J.R. Wang, Y. Zhou and  Z. Lin, On a new class of impulsive fractional differential equations, \emph{Appl. Math. Comput.} {\bf 242} (2014), 649–657.
		
		\bibitem{YTJ} Y. Tian, J.R. Wang and Y. Zhou, Almost periodic solutions for a class of non-instantaneous impulsive differential equations, \emph{Quaest. Math.} {\bf 42} (2019),  885–905.
		
		
		\bibitem{Vcr} V. Vijayakumar, C. Ravichandran, K.S. Nisar and K.D. Kucche, New discussion on approximate controllability results for fractional Sobolev type Volterra-Fredholm integro-differential systems of order $1<r <2$, \emph{Numer. Methods Partial Differ. Eq.}, (2021), 1–19, https://doi.org/10.1002/num.22772.
		
		\bibitem{TY} T. Yang and  L.O. Chua, Impulsive control and synchronization of nonlinear dynamical systems and application to secure communication, \emph{Internat. J. Bifur. Chaos Appl. Sci. Engrg.} {\bf 7} (1997), 645–664.
		
		\bibitem{Z} Z. Yan, Approximate controllability of partial neutral functional differential systems of fractional order with state-dependent delay, {\em Internat. J. Control}, {\bf 85} (2012), 1051-1062.
		
		
		\bibitem{Ch} C.C. Yeh, Discrete inequalities of the Gronwall-Bellman type in $n$ independent variables, \emph{J. Math. Anal. Appl.}, {\bf 105} (1985), 322-332.
		
		\bibitem{Yz} Y. Zhou and J.H. He, New results on controllability of fractional evolution systems with order $\alpha\in(1,2)$, \emph{Evol. Equ. Control Theory}, (2020),  \url{https://doi.org/10.3934/eect.2020077}.
		
		\bibitem{EZ} E. Zuazua, \emph{Controllability and observability of partial differential equations: some results and open problems}, in Handbook of differential equations: evolutionary equations, {\bf 3} (2007), 527-621.
	\end{thebibliography}
\end{document}